\newtheorem{proposition}{Proposition}
\newtheorem{defn}{Definition}
\newtheorem{corollary}{Corollary}
\newtheorem{lemma}{Lemma}
\newtheorem{theorem}{Theorem}
\newtheorem{remark}{Remark}
\numberwithin{equation}{section}
\newcommand{\N}{\mathbb{N}}
\newcommand{\R}{\mathbb{R}}
\title{Existence of weak solutions to volume-preserving mean curvature flow with obstacles} 
\newcommand{\footremember}[2]{%
    \footnote{#2}
    \newcounter{#1}
    \setcounter{#1}{\value{footnote}}%
}
\author{
  Jiwoong Jang \footremember{trailer}{Department of Mathematics, University of Maryland-College Park (jjang124@umd.edu)},
  }
\date{}
\providecommand{\keywords}[1]{\textbf{Key words.} #1}
\providecommand{\MSC}[1]{\textbf{MSC codes.} #1}
\begin{document}
\maketitle

\pagestyle{myheadings}
\thispagestyle{plain}

\begin{abstract}

We prove the existence of global-in-time weak solutions to volume-preserving mean curvature flow with in the presence of obstacles by the phase field method in all dimensions. Namely, we prove the convergence of solutions to the Allen-Cahn equation with a multiplier to a weak solution to the flow. The choice of the multiplier is motivated from \cite{MSS16,KK20,T23}, which enables us to complete the comparison between the multiplier and the forcing that stops the intrusion into the obstacle. We also prove the vanishing of the discrepancy measure by dealing with the forcing term that is now spatially dependent due to the obstacles.

\end{abstract}

\keywords{Volume-preserving mean curvature flow, Obstacles, Allen-Cahn equations, Varifolds.}

\vspace{0.2cm}

\MSC{35K55, 53E10.}

\section{Introduction}\label{sec:introduction}
\subsection{Settings and goals}\label{subsec:setting-goal}
Let $d\geq2$ be an integer, $T>0$, and $\Omega:=\left(\mathbb{R}/\mathbb{Z}\right)^d$. Suppose that an open subset $U_t\subset\Omega$ has a smooth boundary $M_t=\partial U_t$ for each $t\in[0,T)$. The family $\left\{U_t\right\}_{t\in[0,T)}$ of open subsets is said to evolve by \emph{volume-preserving mean curvature flow} if
\begin{align}\label{eq:VPMCF}
v=h-\lambda \nu\qquad\text{on }M_t\quad\text{for }t\in(0,T).
\end{align}
Here, $v$ is the normal velocity vector on $M_t$, $h$ is the mean curvature vector of $M_t$, $\nu$ is the inward unit normal vector on $M_t$, and $\lambda$ is the multiplier responsible for the volume constraint given by
\begin{align*}
\lambda=\lambda(t)=\frac{1}{\mathcal{H}^{d-1}(M_t)}\int_{M_t}h\cdot\nu d\mathcal{H}^{d-1}.
\end{align*}
By the volume constraint we mean that with the $\lambda$ above, the total volume is preserved over time, that is, $\mathcal{L}^d(U_t)=\mathcal{L}^d(U_0)$ for $t\in[0,T)$, where $\mathcal{L}^d$ is the $d$-dimensional Lebesgue measure.

Let $O_+$ and $O_-$ be open subsets of $\Omega$ with disjoint closures and let $O:=O_+\cup O_-$. We say a family $\{U_t\}_{t\in[0,T)}$ of open subsets with smooth boundaries $M_t$ is a solution to \emph{the obstacle problem for volume-preserving mean curvature flow} if \eqref{eq:VPMCF} is satisfied on $\Omega\setminus\overline{O}$, $O_+\subset U_t$, $O_-\cap U_t=\emptyset$ for $t\in[0,T)$. The volume-preserving mean curvature flow with obstacles arises as a natural model for cell motility (see \cite{ESV12,MBRZ16} and the references therein). In this paper, we establish the global-in-time existence of weak solutions to mean curvature flow with volume constraint and obstacles in all dimensions $d\geq2$. To the best of the author's knowledge, this paper is the first work on the mean curvature flow in presence of \emph{both} volume constraint condition and obstacles.

\subsection{Prior work and background}\label{eq:background-literature}
We give a list (by no means complete) of literature on related problems. We first present prior works on the obstacle problem without volume constraint and next on the volume-preserving mean curvature flow without obstacles.

\medskip

The short-time existence of $C^{1,1}$ solutions to the obstacle (without volume constraint) when the obstacle has a compact $C^{1,1}$ boundary was established when $d=2$ in \cite{ACN12} and when $d\geq2$ in \cite{MN15}. Moreover, \cite{ACN12} showed the global-in-time existence of weak solutions by minimizing movement schemes, and \cite{MN15} showed the well-posedness of viscosity solutions in graph setting. In general case in the level-set framework, \cite{M14} showed the well-posedness of viscosity solutions to
\begin{align*}
u_t+\mathrm{tr}\left\{\left(I_d-\frac{Du}{|Du|}\otimes\frac{Du}{|Du|}\right)D^2u\right\}+A|Du|=0\qquad\text{with}\quad\psi^-\leq u\leq\psi^+.
\end{align*}
Here, $I_d$ is the $d\times d$ identity matrix, $\psi^{\pm}$ are given uniformly continuous functions that represent the obstacles, and $A$ is a given Lipschitz continuous function. See \cite{GTZ19} for the well-posedness of Lipschitz continuous viscosity solutions and the large-time behavior with a constant forcing. When $A\equiv0$, the large-time profile was studied in \cite{M24} with the two-person game theory \cite{KS06} in 2-d. 

The \emph{Allen-Cahn equation} has also been taken as a phase field for the obstacle problem. The Allen-Cahn equation, originally from \cite{AC79}, reads
\[
\partial_t\phi^{\varepsilon}=\Delta\phi^{\varepsilon}-\frac{1}{\varepsilon^2}W'(\phi^\varepsilon),
\]
where the standard double-well potential $W$ is given by $W(r):=\frac{1}{2}(1-r^2)^2$. Formal derivation that the transition layer of $\phi^{\varepsilon}$ converges to interfaces moving by mean curvature as $\varepsilon\to0$ was justified by \cite{I93} with the Brakke's motion by mean curvature \cite{B78}. We refer to the references in \cite{C96} for the extensive literature of the works for the Allen-Cahn equation.


For the obstacle problem, \cite{BP12} used a penalized double-well potential in the Allen-Cahn equation. Paper \cite{T21} took the standard potential $W$ to study the obstacle problem, which was seen as the mean curvature motion with (an approximation of) the forcing term
\begin{align*}
g(x):=\begin{cases}
0, \qquad\qquad &\text{for}\quad x\in\Omega\setminus\overline{O},\\
\frac{d}{R_0}, \qquad\qquad &\text{for}\quad x\in\overline{O_+},\\
-\frac{d}{R_0}, \qquad\qquad &\text{for}\quad x\in\overline{O_-}.
\end{cases}
\end{align*}
On the boundary $\partial O_{\pm}$ of the obstacle, the forcing $\pm\frac{d}{R_0}$ ``wins" the curvature effect $\kappa$ so that the surfaces moving by mean curvature do not intrude the obstacle. This is rigorously justified with the maximum principle. We note that the maximum principle serves as the base principle in the viscosity solution approach described above.

\medskip

The difficulty for us occurs when we consider the volume constraint as volume-preserving mean curvature flow does not enjoy the maximum principle. If $\lambda(t)$ is a multiplier responsible for the volume constraint, then the choice of the forcing should be (an approximation of)
\begin{align*}
g(x,t):=\begin{cases}
\lambda(t), \qquad\qquad &\text{for}\quad x\in\Omega\setminus\overline{O},\\
\frac{d}{R_0}, \qquad\qquad &\text{for}\quad x\in\overline{O_+},\\
-\frac{d}{R_0}, \qquad\qquad &\text{for}\quad x\in\overline{O_-}.
\end{cases}
\end{align*}
Unlike the obstacle problem without the volume constraint, it is unclear whether or not the forcing $\pm\frac{d}{R_0}$ ``wins" the multiplier $\lambda(t)$ (plus the curvature $\kappa$) since a regularity of $\lambda(t)$ is unknown in general. This is our main difficulty and will be explained in the context of recent studies on volume-preserving mean curvature flow.

\medskip

We briefly give prior works on volume-preserving mean curvature flow \eqref{eq:VPMCF}. The existence of global-in-time classical solutions to \eqref{eq:VPMCF} with convex smooth initial data was proved by \cite{G86} in 2-d and by \cite{H87} in all dimensions. The short-time existence of classical solutions to \eqref{eq:VPMCF} with smooth initial data (possibly nonconvex) was proved by \cite{ES98}. In general, classical solutions to \eqref{eq:VPMCF} can develop singularities in finite time (see \cite{EI05,MS00}).

\medskip

Paper \cite{RS92} proposed the Allen-Cahn equation with volume conservation that is given by
\begin{align}\label{eq:RS}
\partial_t\phi^{\varepsilon}=\Delta\phi^{\varepsilon}-\frac{1}{\varepsilon^2}W'(\phi^\varepsilon)+\frac{1}{\varepsilon}\lambda^{\varepsilon}(t),    
\end{align}
where $\lambda^{\varepsilon}(t)$ is a Lagrange multiplier that enforces $\int \phi^{\varepsilon}(t,x)dx=\int \phi^{\varepsilon}(0,x)dx$. Radially symmetric solutions to \eqref{eq:RS} were studied in \cite{BS97}, and the transition layer of the solution to \eqref{eq:RS} converges to the classical solution under the assumption that the classical solution exists to \eqref{eq:VPMCF}. Paper \cite{LS18} proved the convergence to a multiphase volume-preserving mean curvature flow of solutions to a multiphase mass-conserving Allen-Cahn equation under a natural energy assumption for the solutions to \eqref{eq:RS}.

Another model with a Lagrange multiplier concentrated on the transition layer was suggested in \cite{G97} (see also \cite{BB11}), which reads
\begin{align}\label{eq:golovaty}
\partial_t\phi^{\varepsilon}=\Delta\phi^{\varepsilon}-\frac{1}{\varepsilon^2}W'(\phi^\varepsilon)+\frac{1}{\varepsilon}\lambda^{\varepsilon}(t)\sqrt{2W(\phi^{\varepsilon})}.
\end{align}
Under the assumption that \eqref{eq:VPMCF} has a classical solution, \cite{AA14} proved the convergence of solutions to \eqref{eq:golovaty} as $\varepsilon\to0$ toward the solution to \eqref{eq:VPMCF}, and \cite{KL24} recently obtained a rate of the convergence under the same assumption. Without the energy assumption and the existence of classical solutions in the limit flow, \cite{T17} verified the convergence to a weak solution to \eqref{eq:VPMCF} of solutions to the model \eqref{eq:golovaty} by \cite{G97} when $d=2,3$, and then \cite{T23} generalized the conclusion for all dimensions. Here, the notion of weak solutions to \eqref{eq:VPMCF} is the called $L^2$-flow developed in \cite{MR08} and is similar to the Brakke's flow \cite{B78}. We adopt this notion in this paper and see Definition \ref{def:L2-flow}.

\medskip

Various studies on \eqref{eq:VPMCF} have been suggested. Under a natural assumption for approximate solutions in terms of energies, \cite{MSS16} proved the global existence of the weak solution in the sense of distributions in $d\leq7$ using minimizing movement schemes (see also \cite{LS95}). The convergence of thresholding schemes to distributional BV solutions to \eqref{eq:VPMCF} was proved in \cite{LS17} under a similar energy assumption. The weak-strong uniqueness for gradient-flow calibrations, which are also relevant that for $L^2$-flows, was proved in \cite{L24}. When the initial data satisfies a geometric condition called the $\rho$-reflection property, \eqref{eq:VPMCF} was studied with viscosity solutions in \cite{KK20}. Seeing a solution to \eqref{eq:VPMCF} as diffused interfaces in view of the Allen-Cahn equation, the exponential convergence toward single ``diffused balls" as time goes by was recently proved in \cite{BMR24}. See also the recent study \cite{MR24} on \eqref{eq:VPMCF} as a singular limit of the Patlak-Keller-Segel system.

\medskip

Paper \cite{T23}, the most relevant reference to this paper, considered a phase field model that is mainly motivated by the approximate problem of \eqref{eq:VPMCF} considered in \cite{MSS16,KK20}. The motivation was due to the reason from an additional regularity of the multiplier that is explained in the following. The approximate problem with a small parameter $\delta\in(0,1)$ is
\begin{align}\label{eq:approximate-VPMCF}
v=h-\lambda^{\delta} \nu\qquad\text{on }M^{\delta}_t\quad\text{for }t\in(0,T),
\end{align}
where $M_t^{\delta}$ is a smooth boundary of an open set $U_t^{\delta}$ and
\begin{align}\label{eq:approximate-multiplier}
    \lambda^{\delta}(t):=\frac{1}{\delta}\left(\mathcal{L}^d(U^{\delta}_0)-\mathcal{L}^d(U^{\delta}_t)\right).
\end{align}
The solution to \eqref{eq:approximate-VPMCF} is a $L^2$-gradient flow of the energy
\begin{align*}
E^{\delta}(t):=\mathcal{H}^{d-1}(M^{\delta}_t)+\frac{1}{2\delta}\left(\mathcal{L}^d(U^{\delta}_0)-\mathcal{L}^d(U^{\delta}_t)\right)^2.
\end{align*}
The choice $\lambda^{\delta}$ with a small number $\delta\in(0,1)$ penalizes the volume $\mathcal{L}^d(U^{\delta}_t)$ at time $t$ different from the initial volume $\mathcal{L}^d(U^{\delta}_0)$.

An advantage of considering \eqref{eq:approximate-VPMCF} is from the gradient flow structure which gives
\begin{align}\label{eq:approximate-VP}
\left(\mathcal{L}^d(U^{\delta}_0)-\mathcal{L}^d(U^{\delta}_t)\right)^2\leq2\delta E^{\delta}(t)\leq2\delta E^{\delta}(0)\leq C\delta,\quad\text{or equivalently,}\quad|\lambda^{\delta}(t)|\leq C\delta^{-1/2}.
\end{align}
Indeed, this approximate volume-preserving property \eqref{eq:approximate-VP} yields an additional regularity of $\lambda^{\delta}$ in the sense that if we know a natural $L^2$-estimate $\sup_{\delta\in(0,1)}\int_0^T|\lambda^{\delta}(t)|^2dt\leq C$, then for a.e. $t\in(0,T)$,
\begin{align}\label{eq:advantage-for-takasao}
\liminf_{\delta\to0}\frac{1}{2\delta}\left(\mathcal{L}^d(U^{\delta}_0)-\mathcal{L}^d(U^{\delta}_t)\right)^2=\liminf_{\delta\to0}\frac{\delta}{2}|\lambda^{\delta}(t)|^2=0.
\end{align}
Paper \cite{T23} obtained the monotonicity of the perimeter functional $\mathcal{H}^{d-1}(M_t)$ (as the limit of the energy dissipation $E^{\delta}(t)\leq E^{\delta}(0)$ as $\delta\to0$) by utilizing \eqref{eq:approximate-VP}, \eqref{eq:advantage-for-takasao} as above, and generalized the result of \cite{T17} to all dimensions.

\medskip

In this paper, we also utilize the estimate \eqref{eq:approximate-VP} in two ways. First, \eqref{eq:approximate-VP} plays a crucial role in the comparison between the forcing $\pm\frac{d}{R_0}$ on the obstacles $O_{\pm}$ that stops the intrusion and the multiplier $\lambda^{\varepsilon}(t)$ (given as in \eqref{eq:lambda}) that drives the surfaces on $\Omega\setminus O_{\pm}$, which is the main difficulty aforementioned. See Proposition \ref{prop:VP-and-multiplier} and Lemma \ref{lem:barrier-functions}. Second, we use the property \eqref{eq:approximate-VP} in a similar manner to \cite{T23} when we prove the vanishing of the discrepancy measure $|\xi^{\varepsilon}|\to0$ (see Sections \ref{sec:discrepancy-density-ratio-estimates} for the definition of $\xi^{\varepsilon}$ and Theorem \ref{thm:vanishing-xi}). However, we need further arguments since the nonpositivity of $\xi^{\varepsilon}$ does not directly hold from the maximum principle as in \cite{I93,T23}, which is another subtle issue from considering the obstacles. We need to control the term from $\xi^{\varepsilon}$ in the monotonicity formula (Proposition \ref{prop:monotonicity-formula}). This term is indeed controlled by establishing upper bounds of $\xi^{\varepsilon}$ and the density ratio (Propositions \ref{prop:upper-bound-discrepancy}, \ref{prop:upper-bound-density-ratio}), for which we critically use the estimate \eqref{eq:approximate-VP} (in the form of \eqref{eq:L-infty-g-ep-rescaled}). Regarding this technical issue when we prove $|\xi^{\varepsilon}|\to0$ (Theorem \ref{thm:vanishing-xi}), we refer to the more detailed account in Sections \ref{sec:discrepancy-density-ratio-estimates} and \ref{sec:rectifiability-integrality}.

\subsection{Notations and precise setup of phase field}\label{subsec:setup}

We arrange the notations including the ones already mentioned before we introduce our main equation.

\medskip

We denote by $\Omega:=\left(\mathbb{R}/\mathbb{Z}\right)^d$, $d\geq2$. An initial surface is denoted by $M_0=\partial U_0$ with interior $U_0$ and is assumed to be $C^1$ throughout the paper. We denote by $O_+$ an obstacle that is initially inside the boundary $M_0$ so that $\overline{O_+}\subset U_0$, and denote by $O_-$ an obstacle that is initially outside the boundary $M_0$ so that $\overline{O_-}\subset\Omega\setminus\overline{U_0}$. We assume that $M_0\cap \partial O=\emptyset$ where $O:=O_+\cup O_-$, that is, the initial surface $M_0$ is not in contact with the obstacles. We mention that $C^{2,\beta}$-regularity of $\partial O$ for some $\beta\in(0,1)$ is assumed for technical reasons throughout the paper, and we will make remarks this assumption later (see Remarks \ref{rmk:C2beta1}) and \ref{rmk:C2beta2}). Basic properties from the regularity assumptions of $M_0$ and $O$ are given in Section \ref{sec:settings}.

\medskip

We let $W(r):=\frac{1}{2}(1-r^2)^2$ and $k(r):=\int_0^r\sqrt{2W(u)}du=r-\frac{1}{3}r^3$ for $r\in\mathbb{R}$. Our main equation is given by
\begin{align}\label{eq:phi-epsilon}
\begin{cases}
\varepsilon\partial_t\phi^{\varepsilon}=\varepsilon\Delta\phi^{\varepsilon}-\frac{1}{\varepsilon}W'(\phi^\varepsilon)+g^{\varepsilon}\sqrt{2W(\phi^{\varepsilon})} \quad &\text{in}\quad\Omega\times(0,\infty),\\
\phi^{\varepsilon}(\cdot,0)=\phi^{\varepsilon}_0 \quad &\text{on}\quad \Omega.
\end{cases}
\end{align}
To give the precise definition of the forcing $g^{\varepsilon}$, we let $\eta\in C^{\infty}(\mathbb{R})$ be a nonincreasing smooth cut-off function such that $0\leq\eta\leq1$, $\eta(r)\equiv1$ near $r=0$ and when $r\leq0$, $\eta(r)\equiv0$ near $r=1$ and when $r\geq0$. Let $\eta_{\varepsilon}(r):=\eta\left(\left(\frac{1}{2}\sqrt{\varepsilon}\right)^{-1}r\right)$ for $r\in\mathbb{R}$. For the signed distance function $s(x)$ from $\partial O$ (positive in $O$),
\begin{align*}
s(x):=\begin{cases}
\mathrm{dist}(x,\partial O) \qquad\qquad &\text{for}\quad x\in O,\\
-\mathrm{dist}(x,\partial O) \qquad\qquad &\text{for}\quad x\notin O,
\end{cases}
\end{align*}
the cut-off function $x\mapsto\eta_{\varepsilon}(s(x))$ is $C^{2,\beta}$ for $\varepsilon\in\left(0,\frac{1}{2}\right)$ sufficiently small. With $\alpha\in\left(0,1\right)$ fixed throughout the paper, we define the multiplier $\lambda^{\varepsilon}(t)$ and the forcing term $g^{\varepsilon}(x,t)$ as follows:
\begin{align}\label{eq:lambda}
\lambda^{\varepsilon}(t):=\frac{1}{\varepsilon^{\alpha}}\int_{\left\{x\in\Omega:s(x)\leq\frac{1}{2}\sqrt{\varepsilon}\right\}}\eta_{\varepsilon}(s(x))\left(k(\phi_0^{\varepsilon}(x))-k(\phi^{\varepsilon}(x,t))\right)dx,
\end{align}
and
\begin{align}\label{eq:forcing}
g^{\varepsilon}(x,t):=\begin{cases}
\lambda^{\varepsilon}(t)\cdot\eta_{\varepsilon}(s(x)) \qquad\qquad &\text{for}\quad x\in\Omega,\ s(x)\leq\frac{1}{2}\sqrt{\varepsilon},\\
\frac{d}{R_0}\left(1-\eta_{\varepsilon}\right)\left(s(x)-\frac{1}{2}\sqrt{\varepsilon}\right) \qquad\qquad &\text{for}\quad x\in O_+,\ \mathrm{dist}(x,\partial O_+)\geq\frac{1}{2}\sqrt{\varepsilon},\\
-\frac{d}{R_0}\left(1-\eta_{\varepsilon}\right)\left(s(x)-\frac{1}{2}\sqrt{\varepsilon}\right) \qquad\qquad &\text{for}\quad x\in O_-,\ \mathrm{dist}(x,\partial O_-)\geq\frac{1}{2}\sqrt{\varepsilon}.
\end{cases}
\end{align}

\begin{figure}[htbp]
	\begin{center}
            \includegraphics[height=7cm]{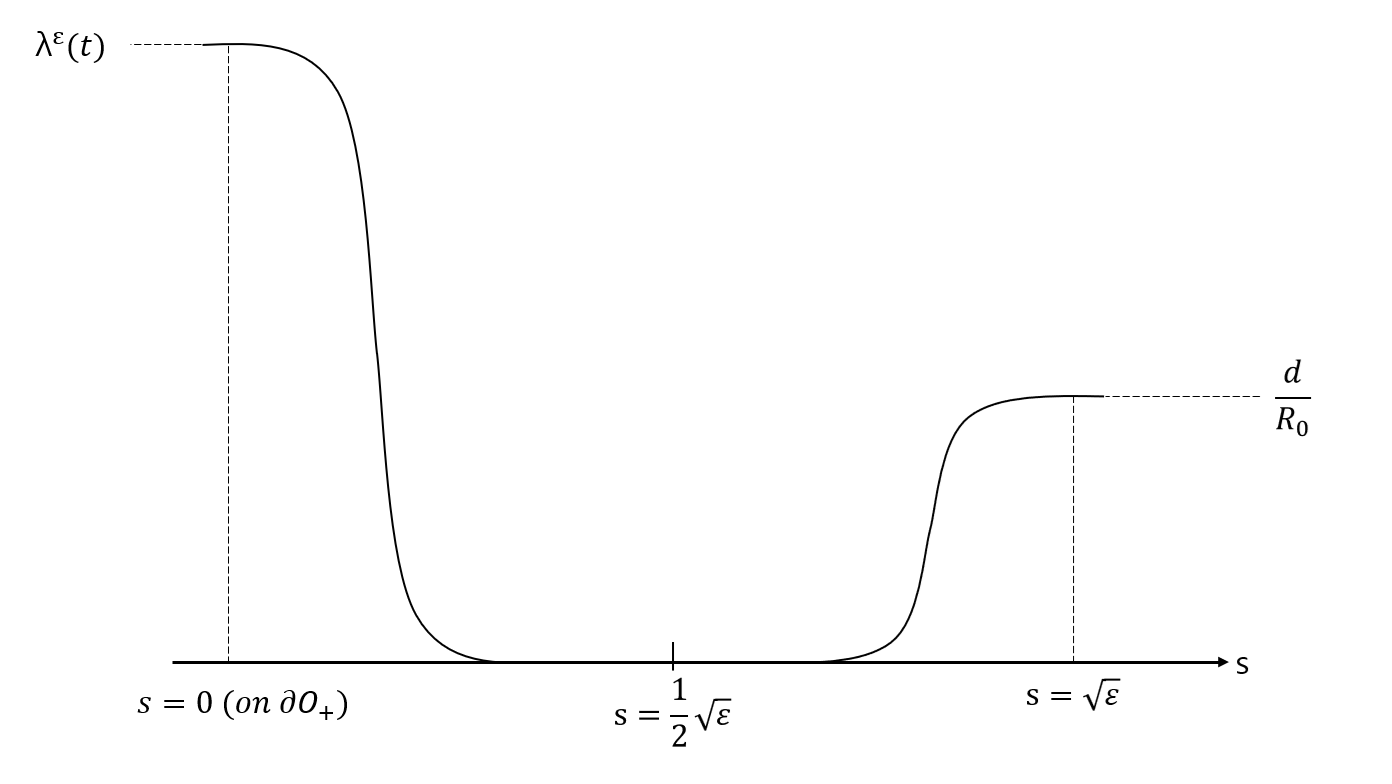}
		\vskip 0pt
		\caption{The forcing term $g^{\varepsilon}$ near $\partial O_+$.}
        \label{fig:forcing}
	\end{center}
\end{figure}

\medskip

The choices \eqref{eq:lambda} and \eqref{eq:forcing} are due to the expectations that $O_+\subset U_t$, $O_-\subset\Omega\setminus U_t$ for all time and thus that the volume preservation only on $\Omega\setminus O$ suffices. We remark that as typically happens to Allen-Cahn-type equations, $\phi^{\varepsilon}$ assumes values close enough to $\pm1$, for which we have $k(\phi^{\varepsilon})\approx\frac{2}{3}\phi^{\varepsilon}$, whose integration then represents (a constant multiple of) volume of the regions $\{\phi^{\varepsilon}\approx\pm1\}$.

The energy associated to \eqref{eq:phi-epsilon} is given by
\begin{align}\label{eq:energy}
E^{\varepsilon}(t):=\int_{\Omega}\left(\frac{\varepsilon|\nabla\phi^{\varepsilon}|^2}{2}+\frac{W(\phi^{\varepsilon})}{\varepsilon}\right)dx+\frac{1}{2\varepsilon^{\alpha}}\left(\int_{\left\{x\in\Omega:s(x)\leq\frac{1}{2}\sqrt{\varepsilon}\right\}}\eta_{\varepsilon}(s(x))\left(k(\phi_0^{\varepsilon})-k(\phi^{\varepsilon})\right)dx\right)^2,
\end{align}
where the solution $\phi^{\varepsilon}$ to \eqref{eq:phi-epsilon} is evaluated at time $t\geq0$.

The precise construction of initial data $\{\phi^{\varepsilon}_{0}\}_{\varepsilon\in(0,1)}$ is given in Subsection \ref{subsec:initial-data}. We give the basic properties of the setup in Section \ref{sec:settings} such as the well-preparedness of $\{\phi^{\varepsilon}_{\text{0}}\}_{\varepsilon\in(0,1)}$ and energy dissipation.

\subsection{Basic definitions and main results}\label{subsec:basic-defns-main-results}

Some definitions from the geometric measure theory are provided (see \cite{S83} for more details).

For $1\leq k< d$, we let $\mathbb{G}(d,k)$ be the space of $k$-dimensional linear subspaces of $\mathbb{R}^d$. We say that $V$ is a \emph{$k$-varifold} on $\Omega$ if it is a Radon measure on $\Omega\times\mathbb{G}(d,k)$. For a $k$-varifold $V$ on $\Omega$, we define the \emph{weight measure $\|V\|$} of $V$ by
\begin{align*}
\|V\|(\phi):=\int_{\Omega\times\mathbb{G}(d,k)}\phi(x)dV(x,S)\qquad\text{for }\phi\in C_c(\Omega).
\end{align*}
We say that a $k$-varifold $V$ on $\Omega$ is \emph{$k$-rectifiable} if there are a $k$-rectifiable set $M\subset\Omega$ and $\theta\in L^1_{loc}(\mathcal{H}^k\lfloor_M;\mathbb{R}_{>0})$ such that $V=\theta(x)\mathcal{H}^k\lfloor_{M}(dx)\otimes\delta_{T_xM}$, that is,
\begin{align*}
V(\phi)=\int_{M} \phi(x, T_{x} M) \theta(x) d \mathcal{H}^{k}(x) \quad \text { for any } \phi \in C_{c}(\Omega\times\mathbb{G}(d,k)).
\end{align*}
Here, $T_xM$ denotes the approximate tangent space of $M$ at $x$. We say a rectifiable varifold $V$ is \emph{integral} if $\theta$ is integer-valued $\mathcal{H}^k\lfloor_M$-almost everywhere, and that a Radon measure $\mu$ on $\Omega$ is $k$-integral if there is a $k$-integral vafifold $V$ such that $\mu=\|V\|$. In this case, we define the approximate tangent space $T_x\mu$ of $\mu$ at $x$ by $T_x\mu:=T_xM$, where a $k$-rectifiable set $M\subset\Omega$ is found as above.

We define the first variation of a $k$-varifold $V$ on $\Omega$ by
\begin{align*}
\delta V(\vec{\phi}):=\int_{\Omega\times\mathbb{G}(d,k)} \nabla \vec{\phi}(x) : S d V(x, S) \quad \text { for any } \zeta \in C_{c}^{1}\left(\Omega ; \mathbb{R}^{d}\right).
\end{align*}
Here, for two square matrices $A,B$, the notation $A:B$ denotes the trace of $AB^t$, $B^t$ the transpose of $B$. The matrix $S$ in the above definition, by abuse of notations, denotes the matrix that corresponds to the orthogonal projection of $\mathbb{R}^d$ onto the subspace $S$. We say that a $k$-varifold $V$ on $\Omega$ has a \emph{generalized mean curvature vector $h$} if
\begin{align*}
\delta V(\zeta)=-\int_{\Omega} \zeta(x) \cdot h(x) d\|V\|(x) \quad \text { for any } \zeta \in C_{c}^{1}\left(\Omega ; \mathbb{R}^{d}\right).
\end{align*}
For the weight measure $\mu=\|V\|$, we say that the Radon measure $\mu$ on $\Omega$ has a generalized mean curvature vector $h$ if the varifold $V$ has a generalized mean curvature vector $h$.
\medskip

With the notations introduced above, the weak notion of solutions on an open subset $U\subset\Omega$ considered in this paper is as follows:

\begin{defn}[$L^2$-flow \cite{MR08}]\label{def:L2-flow}
Let $\{\mu_t\}_{t>0}$ be a family of $(d-1)$-integral Radon measures on $U$ such that $\mu_t$ has a generalized mean curvature vector $h\in L^2(\mu_t;\mathbb{R}^d)$ for a.e. $t\in(0,\infty)$. We call $\{\mu_t\}_{t>0}$ an $L^2$-flow on $U$ if there exist a constant $C>0$ and a vector field $v\in L^2_{loc}\left(0,\infty;L^2(\mu_t)^d\right)$ such that
\[
v(x,t)\perp T_x\mu_t\quad\text{for $\mu$-a.e. }(x,t)\in U\times(0,\infty)
\]
and
\[
\left|\int_0^{\infty}\int_{U}(\eta_t+\nabla\eta\cdot v)d\mu_tdt\right|\leq C\|\eta\|_{C^0(U\times(0,\infty))}
\]
for any $\eta\in C^1_c(U\times(0,\infty))$. A vector field $v\in L^2_{loc}\left(0,\infty;L^2(\mu_t)^d\right)$ satisfying the above conditions is called a generalized velocity vector.
\end{defn}

\medskip

Let $\sigma:=\int_{-1}^1\sqrt{2W(u)}du$. For $\varepsilon\in(0,1),\ t\geq0$, define the Radon measure $\mu_t^{\varepsilon}$ by
\begin{align}\label{eq:def-mu}
    \mu_t^{\varepsilon}(\eta):=\frac{1}{\sigma}\int_{\Omega}\left(\frac{\varepsilon|\nabla\phi^{\varepsilon}|^2}{2}+\frac{W(\phi^{\varepsilon})}{\varepsilon}\right)\eta dx\qquad\text{for all }\eta\in C_c(\Omega),
\end{align}
and we let $d\mu^{\varepsilon}:=d\mu_t^{\varepsilon}dt$ on $\Omega\times[0,\infty)$. We recall that for $f\in L^1(\Omega)$, the distributional derivative $\nabla f$ is defined by $$\int_{\Omega}\varphi\,d[\nabla f]=-\int_{\Omega}f\,\mathrm{div}(\varphi)\,dx\qquad\text{for }\varphi\in C_c^1(\Omega;\mathbb{R}^d),$$
with the variation measure $\|\nabla f\|$ defined by $$\|\nabla f\|(V):=\sup\left\{\int_{\Omega}f\mathrm{div}(\varphi)\,dx\,:\,\varphi\in C_c^1(V;\mathbb{R}^d),\,\|\varphi\|_{L^{\infty}(\Omega)}\leq1\right\}\quad\text{for open sets }V\subset\subset\Omega.$$ If $\|\nabla f\|(\Omega)<\infty$, the variation measure is a Radon measure, and also, the derivative $\nabla f$ is a vector-valued Radon measure.

In case that $f=\chi_E$ is a characteristic function with support $E\subset\Omega$, we say that $E$ is of \emph{finite perimeter} if $\|\nabla\chi_E\|(\Omega)<+\infty$. For a subset $E\subset\Omega$ of finite perimeter, \emph{the reduced boundary} $\partial^{\ast}E$ is defined by $$\partial^{\ast}E=\left\{x\in\mathrm{supp}(\nabla\chi_E)\,:\,\nu(x) := \lim_{r \downarrow 0} \frac{\nabla\chi_E(B_r(x))}{\|\nabla\chi_E\|(B_r(x))} \text{ exists and } |\nu(x)| = 1\right\},$$
where $\mathrm{supp}(\nabla\chi_E)$ is the set $\{x\in\Omega\,:\,0<|E\cap B_r(x)|<\omega_dr^d\,\,\,\text{for }r>0\}$, and $\nu=\nu(\cdot)$ is called the measure-theoretic inner unit normal vector of $E$ on $\partial^{\ast}E$.

\medskip

Now we are ready to state our main theorem. 

\begin{theorem}\label{thm:main-thm}
Suppose that $d\geq2$ and an open set $U_0\subset\Omega$ has a $C^1$ boundary $M_0$ such that $\overline{O_+}\subset U_0$, $\overline{O_-}\subset \Omega\setminus\overline{U_0}$. Suppose that $O:=O_+\cup O_-$ has $C^{2,\beta}$ boundary for some $\beta\in(0,1)$. Let $\phi^{\varepsilon}$ be the solution to \eqref{eq:phi-epsilon} with initial data $\phi_{\textrm{0}}^{\varepsilon}$ 
constructed as in \eqref{eq:initial-data}. Let $\mu^{\varepsilon}_t$ be the associated Radon measure on $\Omega$ defined as in \eqref{eq:def-mu} for each $\varepsilon\in(0,1),\ t\geq0$.

Then, there exists a subsequence of $\varepsilon\to0$ (still denoted by $\varepsilon\to0$) such that
    \begin{itemize}
        \item[(A)] There exists a family of Radon measures $(\mu_t)_{t\geq0}$ on $\Omega$ such that
        \begin{itemize}
            \item[(A.i)] $\mu_t^{\varepsilon}\to\mu_t$ as Radon measures on $\Omega$ for all $t\geq0$ as $\varepsilon\to0$. That is, $\mu_t^{\varepsilon}(\eta)\to\mu_t(\eta)$ for all $\eta\in C_c(\Omega)$ and $t\geq0$ as $\varepsilon\to0$.
            \item[(A.ii)]  $\mu^{\varepsilon}\to\mu$ as Radon measures on $\Omega\times[0,\infty)$, where $d\mu^{\varepsilon}:=d\mu^{\varepsilon}_tdt,\ d\mu:=d\mu_tdt$. That is, $\mu^{\varepsilon}(\eta)\to\mu(\eta)$ for all $\eta\in C_c(\Omega\times[0,\infty))$ as $\varepsilon\to0$.
            \item[(A.iii)] $\mu_t$ is $(d-1)$-integral for a.e. $t\geq0$.
            \item[(A.iv)] $\mu_s(\Omega)\leq\mu_t(\Omega)$ for $t,s\in[0,\infty)\setminus B$ with $t\leq s$, where $B$ is the countable set as in Proposition \ref{prop:convergence-measure}.
        \end{itemize}
        \item[(B)] There exists a characteristic function (say, with support $E(t)\subset\Omega$ at time $t\geq0$)
        \[
        \psi\in C^{0,1/2}\left([0,\infty);L^1(\Omega)\right)\cap BV_{loc}\left(\Omega\times[0,\infty)\right)
        \]
        such that
        \begin{itemize}
            \item[(B.i)] $E(0)=U_0$ a.e. in $\Omega$, and moreover, $\phi^{\varepsilon}\to2\psi-1$ strongly in $L^1_{loc}(\Omega\times[0,\infty))$ and a.e. pointwise.
            \item[(B.ii)] (Volume preservation) It holds that $\mathcal{L}^d(E(t))=\mathcal{L}^d(E(0))$ for all $t\geq0$.
            \item[(B.iii)] $\|\nabla\chi_{E(t)}\|(\Phi)\leq\mu_t(\Phi)$ for any $t>0$ and $\Phi\in C_c(\Omega;\mathbb{R}_{\geq0})$. Also, $\partial^{\ast}E(t)\subset\mathrm{supp}(\mu_t)$ for any $t>0$.
            \item[(B.iv)] (Nonoverlapping with obstacles) For all $t\geq0$, $\mathrm{supp}(\mu_t)\cap O=\emptyset$, $O_+\subset E(t)$, and $O_-\subset\Omega\setminus E(t)$ upto a set of Lebesgue measure zero.

        \end{itemize}
        \item[(C)] There exist $\lambda(t)\in L^2_{loc}(0,\infty)$ and $\varepsilon_1\in(0,1)$ such that
        \[
        \sup_{\varepsilon\in(0,\varepsilon_1)}\int_0^T|\lambda^{\varepsilon}(t)|^2dt<\infty\qquad\text{and}\qquad\lambda^{\varepsilon}\to\lambda\text{ weakly in }L^2(0,T)\quad\text{for any }T>0.
        \]
        \item[(D)] ($L^2$-flow with motion law on $\Omega\setminus\overline{O}$) There exists a measurable function $\theta:\partial^{\ast}E(t)\to\mathbb{N}$ such that $(\mu_t)_{t>0}$ is an $L^2$-flow on $\Omega\setminus\overline{O}$ with a generalized velocity vector \[
        v=h-\frac{\lambda}{\theta}\nu(\cdot,t)\quad\text{$\mathcal{H}^{d-1}$-a.e. on }\partial^{\ast}E(t)\setminus\overline{O}.
        \]
        Here, $h$ is the generalized mean curvature vector of $\mu_t$ and $\nu(\cdot,t)$ is the inner unit normal vector of $E(t)$ on $\partial^{\ast}E(t)$.
        
    \end{itemize}
\end{theorem}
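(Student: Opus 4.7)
The strategy is to carry out the Allen--Cahn-to-mean-curvature-flow passage of Ilmanen and Takasao, adapted to the spatial inhomogeneity of $g^{\varepsilon}$ and the presence of obstacles. I would organize the argument in four blocks corresponding to (A)--(D).

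Testing \eqref{eq:phi-epsilon} against $\partial_t\phi^{\varepsilon}$ yields the energy dissipation identity
\begin{equation*}
E^{\varepsilon}(t)+\int_0^t\!\!\int_{\Omega}\varepsilon(\partial_t\phi^{\varepsilon})^2\,dx\,d\tau \;=\; E^{\varepsilon}(0) \;\leq\; C,
\end{equation*}
which gives the uniform bound $\mu_t^{\varepsilon}(\Omega)\leq C$ and, from the second term in \eqref{eq:energy}, the analogue of \eqref{eq:approximate-VP} furnishing the pointwise bound $|\lambda^{\varepsilon}(t)|\leq C\varepsilon^{-\alpha/2}$ used in Propositions \ref{prop:VP-and-multiplier}, \ref{prop:upper-bound-discrepancy} and \ref{prop:upper-bound-density-ratio}. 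The $L^2$-bound in (C) is then produced as in \cite{T23} by testing \eqref{eq:phi-epsilon} against an appropriate cut-off of $\sqrt{2W(\phi^{\varepsilon})}$ and exploiting the energy dissipation; weak-$L^2$ compactness delivers $\lambda\in L^2_{\mathrm{loc}}(0,\infty)$. Helly's selection applied to the almost-monotonicity of $\mu^{\varepsilon}_t(\Omega)$ (read off the energy identity up to a $\lambda^{\varepsilon}$-correction controlled by (C)) yields (A.i)--(A.ii) and (A.iv); Modica--Mortola together with the standard H\"older-in-time estimate $\|\phi^{\varepsilon}(\cdot,t)-\phi^{\varepsilon}(\cdot,s)\|_{L^1}\lesssim |t-s|^{1/2}$ gives the characteristic function $\psi$ in (B.i) and the Modica--Mortola inequality (B.iii).

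The ingredients specific to the obstacle setting are three. First, the nonoverlapping (B.iv) is handled by the barrier of Lemma \ref{lem:barrier-functions}: Proposition \ref{prop:VP-and-multiplier} shows that in a neighborhood of $\partial O_{\pm}$ the forcing $\pm d/R_0$ dominates $|\lambda^{\varepsilon}|$ plus the curvature of $\partial O_{\pm}$, so a travelling-wave-type barrier prevents the transition layer from entering $O_{\pm}$, and the limit inherits $O_{+}\subset E(t)$ and $O_{-}\subset\Omega\setminus E(t)$. Second, the rectifiability and integrality (A.iii) follow from Theorem \ref{thm:vanishing-xi} (vanishing of the discrepancy) together with Allard's rectifiability theorem and a Brakke-type integrality argument; the new twist is that $\xi^{\varepsilon}$ is no longer nonpositive, so Theorem \ref{thm:vanishing-xi} must be extracted from the two-sided bounds of Propositions \ref{prop:upper-bound-discrepancy} and \ref{prop:upper-bound-density-ratio}, which critically use $|\lambda^{\varepsilon}|\leq C\varepsilon^{-\alpha/2}$. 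Third, volume preservation (B.ii) follows by integrating $\partial_t\phi^{\varepsilon}$ against $\eta_{\varepsilon}(s)$ in time, applying \eqref{eq:lambda}, and invoking the $\varepsilon^{\alpha/2}$ slack of \eqref{eq:approximate-VP} together with the $L^1$-convergence of $\phi^{\varepsilon}$ to $2\psi-1$.

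For the motion law (D), I would test \eqref{eq:phi-epsilon} against $\varepsilon\nabla\phi^{\varepsilon}\cdot\zeta$ for $\zeta\in C_c^1((\Omega\setminus\overline{O})\times(0,\infty);\mathbb{R}^d)$, obtaining an approximate first-variation identity in which the discrepancy error vanishes by Theorem \ref{thm:vanishing-xi}. On the support of $\zeta$ one has $g^{\varepsilon}=\lambda^{\varepsilon}(t)\eta_{\varepsilon}(s)$ with $\eta_{\varepsilon}(s)\to 1$ on compacts, so passing $\varepsilon\to 0$ identifies the generalized mean curvature $h$ of $\mu_t$ and the generalized velocity $v$ satisfying $v=h-\frac{\lambda}{\theta}\nu$ on $\partial^{\ast}E(t)\setminus\overline{O}$. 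The principal obstacle I expect is precisely this last identification: $\lambda^{\varepsilon}\rightharpoonup\lambda$ only weakly in $L^2$, while $\sqrt{2W(\phi^{\varepsilon})}\nabla\phi^{\varepsilon}$ converges only as a varifold with integer multiplicity $\theta$ on the reduced boundary, so the factor $1/\theta$ in the motion law has to be extracted by pairing weak-in-time against strong-in-space convergence on $\Omega\setminus\overline{O}$ --- a delicate step that also forces the formulation to be weak in $t$ rather than pointwise, and that must be carried out jointly with the barrier argument to guarantee that the $g^{\varepsilon}$-contribution from the obstacle region does not pollute the limit on $\Omega\setminus\overline{O}$.
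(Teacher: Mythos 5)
Your proposal follows the same architecture as the paper: energy dissipation and the penalization bound $|\lambda^{\varepsilon}|\leq C\varepsilon^{-\alpha/2}$, the $L^2$ estimate of the multiplier, compactness of the energy measures and of $\psi^{\varepsilon}$, barriers for the obstacles, vanishing of the discrepancy via the two-sided bounds, and finally the first-variation identity on $\Omega\setminus\overline{O}$. Two points are either misstated or missing.

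First, your mechanism for the $L^2$ bound on $\lambda^{\varepsilon}$ in (C) is not what is done. The paper does not test \eqref{eq:phi-epsilon} against a cut-off of $\sqrt{2W(\phi^{\varepsilon})}$; that would not isolate $|\lambda^{\varepsilon}(t)|^2$ with a uniform constant. Instead one tests against $\nabla\phi^{\varepsilon}\cdot\nabla\widetilde{u}$, where $u$ solves the auxiliary Neumann problem \eqref{eq:u} with data $k(\phi^{\varepsilon})\ast\Psi_\delta$ minus its mean on $\Omega\setminus\overline{O}$, extended to $\widetilde{u}\in C^2(\Omega)$. After integration by parts the forcing contributes $\lambda^{\varepsilon}(t)\int_{\Omega\setminus\overline{O}}k(\phi^{\varepsilon})(-\Delta u)\,dx$, and the crux is the lower bound \eqref{eq:lower-bound}, which uses the nondegeneracy of the initial volume \eqref{eq:initial-volume-lower-bound} propagated by the penalization estimate \eqref{eq:VP-from-energy-dissipation}. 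Without producing such a test function there is no obvious way to make the multiplier appear quadratically with a controllable constant; this is a genuine gap in your plan as written.

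Second, (B.iv) contains two statements and you only address the inclusions $O_+\subset E(t)$, $O_-\subset\Omega\setminus E(t)$. The barrier comparison gives $|\phi^{\varepsilon}|$ exponentially close to $1$ inside $O_{\pm}$, but one must also rule out that the \emph{energy measure} charges $O$, i.e.\ $\mathrm{supp}(\mu_t)\cap O=\emptyset$. The potential part $\int W(\phi^{\varepsilon})/\varepsilon$ is killed directly by the barrier, but the gradient part $\int \varepsilon|\nabla\phi^{\varepsilon}|^2/2$ requires an additional integration by parts as in \eqref{eq:rewriting} together with the parabolic scaling bound $\sup\{\varepsilon|\nabla\phi^{\varepsilon}|+\varepsilon^2\|D^2\phi^{\varepsilon}\|\}\leq C_T$ from \eqref{eq:slow-divergence}. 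Absent this step, one cannot exclude a priori that a positive amount of diffuse-surface energy hides inside $O$ even though $\phi^{\varepsilon}\approx\pm1$ there. You should add this argument to (B.iv).

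Minor remarks: the $C^{0,1/2}$ time regularity is proved for $\psi^{\varepsilon}=K\circ\phi^{\varepsilon}$ (not for $\phi^{\varepsilon}$ directly, since $K^{-1}$ is not Lipschitz near $\pm1$), and in (D) the paper pairs the weak $L^2$ convergence of $\lambda^{\varepsilon}$ with the strong $L^1_{\mathrm{loc}}$ (and pointwise bounded) convergence of $k(\phi^{\varepsilon})$, after integrating by parts once more to move $\nabla(k(\phi^{\varepsilon}))$ onto the test vector field; the factor $1/\theta$ then comes from the Radon--Nikodym derivative $d\|\nabla\psi(\cdot,t)\|/d\mu_t$. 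Your intuition about the mechanism is right, but the paper avoids pairing weak convergence against the varifold convergence directly.
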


\begin{remark}
Under the above setting, the change of volume is zero, that is,
\begin{align*}
\int_{\Omega}v\cdot\nu(\cdot,t)d\|\nabla\chi_{E(t)}\|=0\quad\text{for a.e. }t>0.
\end{align*}
Indeed, for any $\zeta\in C^1_c((0,T))$, which is seen as a test function in $C^1_c\left(\Omega\times(0,T)\right)$ spatially constant over $\Omega$, \cite[Proposition 4.5]{MR08} and (B.ii) yield that
\[
\int_0^T\zeta\int_{\Omega}v\cdot\nu d\|\nabla\chi_{E(t)}\|dt=\int_0^T\zeta_t\int_{\Omega}\chi_{E(t)}dxdt=\mathcal{L}^d(U_0)\int_0^T\zeta_tdt=0.
\]
\end{remark}

\begin{remark}\label{rmk:C2beta1}
We assumed that the obstacles are $C^{2,\beta}$-regular. Although the $C^{1,1}$-condition would suffice to employ the interior ball condition (see \eqref{eq:R0}) to choose the forcing term inside the obstacles with a reasonable magnitude, we impose further regularity in this paper. The main reason is in order to use the classical Schauder theory when we prove an $\varepsilon$-uniform $L^2$-estimate of the multiplier $\lambda^{\varepsilon}(t)$ in Proposition \ref{prop:L2-estimate}, and further technical reasons will be explained after the proof of Proposition \ref{prop:L2-estimate}.

Although we may think an approximation of a given rough obstacle by more regular obstacles as $\varepsilon\to0$, we do not pursue in this paper finding the optimal regularity condition on the obstacles with which we can obtain the result of Theorem \ref{thm:main-thm}. However, we mention that establishing the result with obstacles with rough boundaries is also an important problem, as such obstacles can naturally arise from physical phenomena.
\end{remark}

\section{Settings}\label{sec:settings}
This section is devoted to basic facts from the setting given in Subsection \ref{subsec:setup}.

\subsection{Obstacles and well-prepared initial data}\label{subsec:initial-data}

We remark that as $O$ is $C^{2,\beta}$, it satisfies the interior ball condition so that there exists $R_0>0$ such that
\begin{align}\label{eq:R0}
    O=\bigcup_{B_{R_0}(x)\subset O}B_{R_0}(x).
\end{align}

\medskip

It is known (see \cite{G84}) that from the assumption that $M_0$ is $C^1$, there exists a family $\{U_0^i\}_{i=1}^{\infty}$ of open sets with $C^3$ boundaries $M_0^i=\partial U_0^i$ for each $i=1,2,\cdots,$ such that
\begin{align*}
\lim_{i\to\infty}\mathcal{L}^d(U_0\Delta U_0^i)=0\qquad\text{and}\qquad\lim_{i\to\infty}\|\nabla\chi_{U_0^i}\|=\|\nabla\chi_{U_0}\|\text{ as measures.}
\end{align*}
For a sequence $\{\varepsilon_i\}_{i=1}^{\infty}$ that converges to 0 as $i\to\infty$, we let 
\begin{align*}
r_0^{i}(x):=\begin{cases}
\mathrm{dist}(x,M_0^i) \qquad\qquad &\text{for}\quad x\in U_0^i,\\
-\mathrm{dist}(x,M_0^i) \qquad\qquad &\text{for}\quad x\notin U_0^i,
\end{cases}
\end{align*}
and $\widetilde{r}_0^{i}\in C^3(\Omega)$ be a smoothing of $r_0^{i}(x)$ such that, by taking a subsequence if necessary,
\begin{align}\label{eq:derivative-r}
\sup_{i\geq1}\|\nabla^j\widetilde{r}_0^{i}\|_{L^{\infty}(\Omega)}\leq\varepsilon_i^{-j+1}\qquad\text{for }j=1,2,3,
\end{align}
and $\widetilde{r}_0^{i}=r_0^{i}$ near $M_0^i$. With the functions $q^{\varepsilon}(r):=\tanh\left({\frac{r}{\varepsilon}}\right)$ for $r\in\mathbb{R},\ \varepsilon\in(0,1)$, we set
\begin{align}\label{eq:initial-data}
\phi_0^{\varepsilon_i}(x):=q^{\varepsilon_i}\left(\widetilde{r}_0^{i}(x)\right)\qquad\text{for }x\in\Omega.    
\end{align}

Also, as we are assuming that $\overline{O_-}\subset\Omega\setminus\overline{U_0}$, we see that $U_0$ does not occupy the full region of $\Omega\setminus\overline{O_-}$ (so that the meaningful motion happens). Consequently, there exists a constant $\omega\in(0,1)$ such that $\frac{2}{3}-\left|\fint_{\Omega\setminus O}k(\chi_{U_0})dx\right|\geq3\omega>0$ and such that
\begin{align}\label{eq:initial-volume-lower-bound}
\frac{2}{3}-\left|\fint_{\Omega\setminus O}k(\phi^{\varepsilon_i}_0)dx\right|\geq2\omega>0\qquad\text{for all }i\geq1,
\end{align}
where $\fint_{\Omega\setminus O}f(x) dx$ is the average of the integration over $\Omega\setminus O$ of a given integrand $f(x)$.

\medskip

Throughout the rest of the paper, we write $\varepsilon\in(0,1)$ instead of $\{\varepsilon_i\}_{i=1}^{\infty}$ with the full indexing by abuse of notations. We will denote by $C>0$ positive constants that may vary line by line. Dependency of numbers (such as $C>0$, $\varepsilon\in(0,1)$) on the quantities given by the problem (such as $M_0$, $\{U_0^i\}_{i=1}^{\infty}$, $O$, the radius $R_0>0$, the spatial dimension $d$, and $\alpha,\beta,\omega\in(0,1)$) will be omitted unless necessary.

\subsection{Energy dissipation}\label{subsec:energy-dissipation}

It is well known that the well-prepared initial data \eqref{eq:initial-data} have $\varepsilon$-uniform initial energy, that is,
\begin{align}\label{eq:well-preparedness}
\sup_{\varepsilon\in(0,1)}\mu_0^{\varepsilon}(\chi_{\Omega})\leq C\qquad\text{for some constant }C>0.    
\end{align}
Utilizing this fact, we obtain the following.

\begin{proposition}\label{prop:energy-dissipation}
Let $\phi^{\varepsilon}$ be the solution to \eqref{eq:phi-epsilon} with the initial data \eqref{eq:initial-data}, where $g^{\varepsilon},\lambda^{\varepsilon}$ are given by \eqref{eq:forcing}, \eqref{eq:lambda}, respectively. Let $E^{\varepsilon}(t)$ be the energy given by \eqref{eq:energy} and $\mu_t^{\varepsilon}$ the Radon measure given by \eqref{eq:def-mu}.

Then, there exists a constant $C>0$ such that for $\varepsilon\in(0,1)$, $t\geq0$,
\begin{align}\label{eq:energy-dissipation-equality}
E^{\varepsilon}(t)+\int_0^t\int_{\Omega}\varepsilon(\phi_t^{\varepsilon})^2dxdt=E^{\varepsilon}(0)+\int_{\{x:s(x)>\frac{1}{2}\sqrt{\varepsilon}\}}g^{\varepsilon}\cdot\left(k(\phi^{\varepsilon}(t))-k(\phi^{\varepsilon}(0)\right)dx,
\end{align}
and
\begin{align}\label{eq:estimate-from-energy-dissipation}
\mu_t^{\varepsilon}(\chi_{\Omega})+\int_0^t\int_{\Omega}\varepsilon(\phi^{\varepsilon}_t)^2dxdt\leq C.    
\end{align}
\end{proposition}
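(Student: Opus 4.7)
The plan is to multiply \eqref{eq:phi-epsilon} by $\phi^{\varepsilon}_t$, integrate over $\Omega$, and reinterpret the resulting right-hand side as a total time derivative, exploiting the identity $k'(r)=\sqrt{2W(r)}$ together with the special structure of $g^{\varepsilon}$ in \eqref{eq:forcing}. After integrating the Laplacian term by parts, the standard Allen--Cahn gradient-flow computation yields
\begin{align*}
\int_{\Omega}\varepsilon(\phi_t^{\varepsilon})^2\,dx+\frac{d}{dt}\int_{\Omega}\!\left(\frac{\varepsilon|\nabla\phi^{\varepsilon}|^2}{2}+\frac{W(\phi^{\varepsilon})}{\varepsilon}\right)\!dx=\int_{\Omega}g^{\varepsilon}\sqrt{2W(\phi^{\varepsilon})}\,\phi_t^{\varepsilon}\,dx=\int_{\Omega}g^{\varepsilon}\,\partial_t k(\phi^{\varepsilon})\,dx,
\end{align*}
so the remaining task is to convert the right-most integral into a form compatible with $E^{\varepsilon}(t)$ in \eqref{eq:energy}.

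To that end, I will split the forcing integral at $s(x)=\frac{1}{2}\sqrt{\varepsilon}$ according to \eqref{eq:forcing}. On the region $\{s(x)\leq\frac{1}{2}\sqrt{\varepsilon}\}$, introduce $F(t):=\int_{\Omega}\eta_{\varepsilon}(s(x))\,k(\phi^{\varepsilon}(x,t))\,dx$; by \eqref{eq:lambda} this gives $\lambda^{\varepsilon}(t)=\varepsilon^{-\alpha}(F(0)-F(t))$, and hence
\begin{align*}
\int_{\{s\leq\frac{1}{2}\sqrt{\varepsilon}\}}\lambda^{\varepsilon}(t)\,\eta_{\varepsilon}(s(x))\,\partial_t k(\phi^{\varepsilon})\,dx=\lambda^{\varepsilon}(t)\,F'(t)=-\frac{d}{dt}\!\left[\frac{1}{2\varepsilon^{\alpha}}(F(0)-F(t))^2\right]\!,
\end{align*}
which is exactly the negative of the time derivative of the volume-penalty term of $E^{\varepsilon}(t)$ in \eqref{eq:energy}. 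Moving this piece to the left upgrades the Dirichlet part of the energy to the full $E^{\varepsilon}(t)$. On the complementary region $\{s(x)>\frac{1}{2}\sqrt{\varepsilon}\}\subset\overline{O}$, $g^{\varepsilon}(x)$ is time-independent, so $g^{\varepsilon}\,\partial_t k(\phi^{\varepsilon})=\partial_t\bigl[g^{\varepsilon}k(\phi^{\varepsilon})\bigr]$ is itself a clean time derivative. Integrating from $0$ to $t$ then produces \eqref{eq:energy-dissipation-equality}.

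For the uniform bound \eqref{eq:estimate-from-energy-dissipation}, the well-preparedness \eqref{eq:well-preparedness} together with $\lambda^{\varepsilon}(0)=0$ (so that the volume-penalty part of $E^{\varepsilon}(0)$ vanishes) gives $E^{\varepsilon}(0)\leq C$. To control the residual forcing contribution in \eqref{eq:energy-dissipation-equality}, I will establish the pointwise envelope $|\phi^{\varepsilon}(x,t)|\leq 1$ on $\Omega\times[0,\infty)$: since $W(\pm 1)=W'(\pm 1)=0$, the constants $\phi\equiv\pm 1$ are stationary solutions of \eqref{eq:phi-epsilon}, and the initial data \eqref{eq:initial-data} satisfies $|\phi_0^{\varepsilon}|<1$ because $\tanh$ takes values in $(-1,1)$; the parabolic comparison principle then yields $|\phi^{\varepsilon}|\leq 1$. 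Consequently $|k(\phi^{\varepsilon})|\leq 2/3$, and since $|g^{\varepsilon}|\leq d/R_0$ on $\{s>\frac{1}{2}\sqrt{\varepsilon}\}\subset\overline{O}$ with $\mathcal{L}^d(\overline{O})\leq 1$, the forcing term on the right of \eqref{eq:energy-dissipation-equality} is uniformly bounded in $\varepsilon$. Both summands on the left of \eqref{eq:energy-dissipation-equality} being non-negative, and the first dominating $\sigma\,\mu^{\varepsilon}_t(\chi_{\Omega})$, the estimate \eqref{eq:estimate-from-energy-dissipation} follows. The step that deserves genuine care here is the comparison argument for $|\phi^{\varepsilon}|\leq 1$: it relies on the local Lipschitzness of $-\varepsilon^{-1}W'(\phi)+g^{\varepsilon}\sqrt{2W(\phi)}$ as a function of $\phi$ near $[-1,1]$, which is immediate from the smoothness of $W$ and $\sqrt{2W}$ there; the rest of the argument is bookkeeping around the $k$--$W$ identity.
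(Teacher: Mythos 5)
Your proof is correct and follows essentially the same route as the paper: the same testing of \eqref{eq:phi-epsilon} against $\phi^{\varepsilon}_t$, the same split of the forcing integral at $s(x)=\tfrac12\sqrt{\varepsilon}$, and the same cancellation of the multiplier term against the time derivative of the volume-penalty part of $E^{\varepsilon}$. The only (minor) difference is that you sketch the bound $|\phi^{\varepsilon}|\leq 1$ via comparison with the constant stationary solutions $\pm 1$, whereas the paper simply invokes Proposition~\ref{prop:strong-max-principle}; both routes are fine for the purposes of \eqref{eq:estimate-from-energy-dissipation}.
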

\begin{proof}
We differentiate to obtain
\begin{align*}
&\frac{d}{dt}\int_{\Omega}\left(\frac{\varepsilon|\nabla\phi^{\varepsilon}|^2}{2}+\frac{W(\phi^{\varepsilon})}{\varepsilon}\right)dx=\int_{\Omega}\varepsilon\nabla\phi^{\varepsilon}\cdot\nabla\phi^{\varepsilon}_t+\varepsilon^{-1}W'(\phi^{\varepsilon})\phi^{\varepsilon}_t dx\\
&=\int_{\Omega}\left(-\varepsilon\Delta\phi^{\varepsilon}+\varepsilon^{-1}W'(\phi^{\varepsilon})\right)\phi^{\varepsilon}_t dx=\int_{\Omega}\left(-\varepsilon\phi^{\varepsilon}_t+g^{\varepsilon}\sqrt{2W(\phi^{\varepsilon})}\right)\phi^{\varepsilon}_tdx.
\end{align*}
Thus,
\begin{align*}
\frac{d}{dt}E^{\varepsilon}(t)&=-\int_{\Omega}\varepsilon(\phi^{\varepsilon}_t)^2dx+\int_{\Omega}g^{\varepsilon}\sqrt{2W(\phi^{\varepsilon})}\phi^{\varepsilon}_tdx+\lambda^{\varepsilon}(t)\cdot\left(-\int_{\left\{x:s(x)\leq\frac{1}{2}\sqrt{\varepsilon}\right\}}\eta_{\varepsilon}(s(x))\left(k(\phi^{\varepsilon})\right)_tdx\right)\\
&=-\int_{\Omega}\varepsilon(\phi^{\varepsilon}_t)^2dx+\int_{\left\{x:s(x)>\frac{1}{2}\sqrt{\varepsilon}\right\}}g^{\varepsilon}\left(k(\phi^{\varepsilon})\right)_tdx.
\end{align*}
By integrating in time from $0$ to $t$, we obtain \eqref{eq:energy-dissipation-equality}. We note that $g^{\varepsilon}$ is independent of time and bounded by $\frac{d}{R_0}$ in the set $\left\{x:s(x)>\frac{1}{2}\sqrt{\varepsilon}\right\}$. Inequality \eqref{eq:estimate-from-energy-dissipation} follows from \eqref{eq:well-preparedness}, \eqref{eq:energy-dissipation-equality}, and the fact that $\left|\phi^{\varepsilon}\right|$ is bounded by 1 (see Proposition \ref{prop:strong-max-principle}).
\end{proof}

From \eqref{eq:energy-dissipation-equality}, we also have the following properties that will be crucially used later.

\begin{proposition}\label{prop:VP-and-multiplier}
Under the same setting as in Proposition \ref{prop:energy-dissipation}, there exists a constant $C>0$ such that for $\varepsilon\in(0,1)$, $t\geq0$,
\begin{align}\label{eq:VP-from-energy-dissipation}
\frac{1}{\varepsilon^{\alpha/2}}\left|\int_{\left\{x\in\Omega:s(x)\leq\frac{1}{2}\sqrt{\varepsilon}\right\}}\eta_{\varepsilon}(s(x))\left(k(\phi_0^{\varepsilon})-k(\phi^{\varepsilon})\right)dx\right|\leq C,
\end{align}
or equivalently,
\begin{align}\label{eq:estimate-multiplier}
|\lambda^{\varepsilon}(t)|\leq C\varepsilon^{-\alpha/2}.
\end{align}
\end{proposition}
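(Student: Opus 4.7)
The plan is to read off the desired bound directly from the energy dissipation identity \eqref{eq:energy-dissipation-equality} proved in Proposition \ref{prop:energy-dissipation}, using the fact that the penalization term appearing in $E^{\varepsilon}(t)$ has exactly the shape $\frac{1}{2\varepsilon^{\alpha}}(\cdot)^2$, where $(\cdot)$ is the quantity we wish to control. This mirrors the gradient flow structure used to get \eqref{eq:approximate-VP} in the discussion of \cite{MSS16,KK20,T23}.

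First I would bound the forcing contribution on the right-hand side of \eqref{eq:energy-dissipation-equality}. On $\{s(x)>\frac{1}{2}\sqrt{\varepsilon}\}$, the forcing $g^{\varepsilon}$ is bounded by $\frac{d}{R_0}$ uniformly in $\varepsilon$ by its definition \eqref{eq:forcing}. By Proposition \ref{prop:strong-max-principle} (invoked already at the end of the proof of Proposition \ref{prop:energy-dissipation}), $|\phi^{\varepsilon}|\le 1$, so $|k(\phi^{\varepsilon})|\le \frac{2}{3}$ and consequently
\[
\left|\int_{\{s>\frac{1}{2}\sqrt{\varepsilon}\}} g^{\varepsilon}\bigl(k(\phi^{\varepsilon}(t))-k(\phi^{\varepsilon}(0))\bigr)\,dx\right|\le \tfrac{d}{R_0}\cdot\tfrac{4}{3}\cdot\mathcal{L}^d(\Omega)=:C_1.
\]

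Next I would bound the initial energy $E^{\varepsilon}(0)$. The penalization term in \eqref{eq:energy} vanishes at $t=0$ since $k(\phi^{\varepsilon}_0)-k(\phi^{\varepsilon}_0)\equiv 0$, so $E^{\varepsilon}(0)=\sigma\,\mu_0^{\varepsilon}(\chi_{\Omega})$, which is uniformly bounded by the well-preparedness \eqref{eq:well-preparedness}. Dropping the nonnegative dissipation term on the left of \eqref{eq:energy-dissipation-equality} then yields
\[
\frac{1}{2\varepsilon^{\alpha}}\left(\int_{\{s\le\frac{1}{2}\sqrt{\varepsilon}\}}\eta_{\varepsilon}(s)\bigl(k(\phi_0^{\varepsilon})-k(\phi^{\varepsilon})\bigr)dx\right)^{2}\le E^{\varepsilon}(t)\le E^{\varepsilon}(0)+C_1\le C,
\]
uniformly in $\varepsilon\in(0,1)$ and $t\ge 0$. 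Taking square roots yields \eqref{eq:VP-from-energy-dissipation}, and dividing by an additional $\varepsilon^{\alpha/2}$ and comparing with the definition \eqref{eq:lambda} of $\lambda^{\varepsilon}(t)$ yields the equivalent estimate \eqref{eq:estimate-multiplier}.

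There is no real obstacle here; the only thing to watch is to confirm the two ingredients already at hand, namely the $L^{\infty}$ bound $|\phi^{\varepsilon}|\leq 1$ and the $\varepsilon$-uniform initial energy bound, both of which have been established in the preceding subsection. Conceptually, the proposition is just the statement that the quadratic penalty in $E^{\varepsilon}$ keeps the would-be Lagrange multiplier from blowing up faster than $\varepsilon^{-\alpha/2}$, which is the volume-preserving analogue of \eqref{eq:approximate-VP} tailored to the obstacle setting.
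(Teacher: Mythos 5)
Your proposal is correct and is exactly the argument the paper intends; the paper's own proof merely cites \eqref{eq:well-preparedness}, \eqref{eq:energy-dissipation-equality}, and Proposition~\ref{prop:strong-max-principle} without spelling out the bookkeeping. You have supplied precisely the missing details: the forcing contribution is $O(1)$ by the $L^\infty$ bounds on $g^{\varepsilon}$ and $k(\phi^{\varepsilon})$, the penalty term in $E^{\varepsilon}(0)$ vanishes so $E^{\varepsilon}(0)$ is controlled by well-preparedness, and dropping the dissipation isolates the quadratic penalty, giving both \eqref{eq:VP-from-energy-dissipation} and \eqref{eq:estimate-multiplier}.
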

\begin{proof}
Inequality \eqref{eq:VP-from-energy-dissipation} follows from \eqref{eq:well-preparedness}, \eqref{eq:energy-dissipation-equality}, and Proposition \ref{prop:strong-max-principle}.
\end{proof}

We also record the following that will be used in the next section.

\begin{proposition}\label{prop:strong-max-principle}
Under the same setting as in Proposition \ref{prop:energy-dissipation}, it holds that
$$\|\phi^{\varepsilon}(\cdot,t)\|_{L^{\infty}(\Omega)}<1\qquad\text{for }\varepsilon\in(0,1),\ t\geq0.$$
\end{proposition}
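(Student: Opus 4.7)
The plan is a maximum principle argument that exploits the fact that $\phi \equiv \pm 1$ are stationary solutions of \eqref{eq:phi-epsilon}. The key observation is that $W'(\pm 1)=0$ and $\sqrt{2W(\pm 1)}=0$, so the right-hand side of \eqref{eq:phi-epsilon} vanishes identically when $\phi\equiv \pm 1$, irrespective of the forcing $g^{\varepsilon}$. Together with this, the initial data \eqref{eq:initial-data} satisfies
\[
\|\phi_0^{\varepsilon}\|_{L^{\infty}(\Omega)}=\tanh\!\left(\|\widetilde r_0^{\,i}\|_{L^{\infty}(\Omega)}/\varepsilon\right)<1
\]
strictly, since $\widetilde r_0^{\,i}\in C^3(\Omega)$ is bounded on the compact torus $\Omega$.

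Next, I would set $u:=\phi^{\varepsilon}-1$ and rewrite \eqref{eq:phi-epsilon} as a linear parabolic equation for $u$. Because both $W'$ and $\sqrt{2W}(\cdot)=|1-(\cdot)^2|$ are Lipschitz on bounded intervals and vanish at $\phi=1$, the quotient
\[
c(x,t):=\frac{-\tfrac{1}{\varepsilon}W'(\phi^{\varepsilon}(x,t))+g^{\varepsilon}(x,t)\sqrt{2W(\phi^{\varepsilon}(x,t))}}{\phi^{\varepsilon}(x,t)-1}
\]
(extended by the corresponding Lipschitz constant where $\phi^{\varepsilon}=1$) is a bounded measurable function, provided $\phi^{\varepsilon}$ (hence $\lambda^{\varepsilon}$, hence $g^{\varepsilon}$) is known to be bounded on $\Omega\times[0,T]$, which one has from local existence theory. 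Then $u$ solves the linear parabolic equation
\[
\varepsilon\partial_t u-\varepsilon\Delta u=c(x,t)\,u,\qquad u(\cdot,0)<0\ \ \text{strictly on }\Omega.
\]

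Finally, I would apply the weak and strong maximum principles for linear parabolic PDEs with bounded coefficients on the torus. The weak maximum principle yields $u\leq 0$, i.e.\ $\phi^{\varepsilon}\leq 1$. For strictness, if $u(x_0,t_0)=0$ at some $t_0>0$, the strong maximum principle forces $u\equiv 0$ on $\Omega\times[0,t_0]$, contradicting $u(\cdot,0)<0$. Hence $\phi^{\varepsilon}<1$ strictly on $\Omega\times[0,\infty)$; the symmetric argument applied to $\phi^{\varepsilon}+1$ gives $\phi^{\varepsilon}>-1$ strictly.

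The main subtlety I anticipate is a potential circular dependence between the bound and $g^{\varepsilon}$: the coefficient $c(x,t)$ involves $g^{\varepsilon}$, which depends on $\phi^{\varepsilon}$ through $\lambda^{\varepsilon}$, so one cannot yet appeal to Proposition~\ref{prop:VP-and-multiplier}. This is resolved by a continuity/bootstrap in $t$: on a short initial interval $\phi^{\varepsilon}$ stays close to $\phi_0^{\varepsilon}$, keeping $k(\phi^{\varepsilon})$ and hence $\lambda^{\varepsilon}$ and $g^{\varepsilon}$ bounded; the maximum principle then propagates the strict bound, which in turn keeps $\lambda^{\varepsilon}$ bounded on the next interval, and iteration extends the bound to all of $[0,\infty)$. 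A secondary technical point is that $\sqrt{2W(\phi)}=|1-\phi^2|$ is only Lipschitz (not $C^1$) at $\phi=\pm 1$, but Lipschitz regularity is all that the quotient definition of $c(x,t)$ and the linear parabolic maximum principles require.
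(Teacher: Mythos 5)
Your proposal is correct and takes essentially the same route the paper points to: the paper omits the proof and cites the strong maximum principle argument in \cite[Proposition 3.3]{T21}, which is exactly what you do. The observations that drive it — that $\pm 1$ are stationary (both $W'$ and $\sqrt{2W}$ vanish there regardless of $g^\varepsilon$), that $\sqrt{2W(\phi)}=|1-\phi^2|$ is Lipschitz but not $C^1$ at $\pm 1$, and that $u=\phi^\varepsilon\mp 1$ satisfies a linear parabolic equation with bounded zeroth-order coefficient $c(x,t)$ (after a $u\mapsto e^{-Kt}u$ transformation to fix the sign of the coefficient, which you should add for the weak maximum principle step) — are all the right ingredients. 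One small simplification: the circularity you flag does not actually require an iteration. On any compact interval $[0,T]$ where the classical solution exists, $\phi^\varepsilon$ is automatically bounded and continuous, hence $k(\phi^\varepsilon)$, $\lambda^\varepsilon$, and $g^\varepsilon$ are bounded there directly by their definitions \eqref{eq:lambda}--\eqref{eq:forcing}, and the maximum principle then gives $|\phi^\varepsilon|<1$ on $[0,T]$ outright; $T$ is arbitrary, so no continuation-in-time iteration is needed and the Zeno-type concern about whether the iteration reaches $\infty$ never arises.
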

The proposition can be proved by the strong maximum principle (see \cite[Proposition 3.3]{T21}) and we omit the proof here.

\section{Barrier functions for obstacles}\label{sec:barrier-functions}

We construct barrier functions for obstacles in this section. We let $\underline{r}_{y}(x):=\frac{1}{2R_0}\left(R_0^2-|x-y|^2\right)$, $\overline{r}_y:=-\underline{r}_y$, $\underline{\phi}^{\varepsilon}_y(x):=q^{\varepsilon}(\underline{r}_y(x))$, $\overline{\phi}^{\varepsilon}_y(x):=q^{\varepsilon}(\overline{r}_y(x))$. We recall that $q^{\varepsilon}(r):=\tanh{\left(\frac{r}{\varepsilon}\right)}$ for $r\in\mathbb{R}$. We consider the periodical extensions of $\Omega$, $O$, and $\phi^{\varepsilon}$ to $\mathbb{R}^d$.

\begin{lemma}\label{lem:barrier-functions}
There exists $\varepsilon_0\in(0,1)$ satisfying the following:
\begin{itemize}
\item[(i)] If $B_{R_0+2\sqrt{\varepsilon}}(y)\subset O_+$, then $\underline{\phi}^{\varepsilon}_y(x)$ is a subsolution to the first equation of \eqref{eq:phi-epsilon} with $\mathbb{R}^d$ in place of $\Omega$ for any $\varepsilon\in(0,\varepsilon_0)$.
\item[(ii)] If $B_{R_0+2\sqrt{\varepsilon}}(z)\subset O_-$, then $\overline{\phi}^{\varepsilon}_z(x)$ is a supersolution to the first equation of \eqref{eq:phi-epsilon} with $\mathbb{R}^d$ in place of $\Omega$ for any $\varepsilon\in(0,\varepsilon_0)$.
\end{itemize}


\end{lemma}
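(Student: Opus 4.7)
The plan is to reduce the sub-solution condition to a pointwise elliptic inequality (since $\underline{\phi}_y^{\varepsilon}$ is time-independent, the requirement is just $\varepsilon\Delta\underline{\phi}_y^{\varepsilon}-\varepsilon^{-1}W'(\underline{\phi}_y^{\varepsilon})+g^{\varepsilon}\sqrt{2W(\underline{\phi}_y^{\varepsilon})}\geq 0$) and then verify it by a direct computation on the radial ansatz $\underline{\phi}_y^{\varepsilon}=q^{\varepsilon}(\underline{r}_y)$. I would use (i) the ODE identity $q''=W'(q)$ for $q=\tanh$, equivalently $(q^{\varepsilon})''(r)=\varepsilon^{-2}W'(q^{\varepsilon}(r))$ and $(q^{\varepsilon})'(r)=\varepsilon^{-1}(1-q^{\varepsilon}(r)^2)$; (ii) the identity $\sqrt{2W(q^{\varepsilon})}=1-(q^{\varepsilon})^2$ (valid as $|q^{\varepsilon}|<1$); and (iii) the explicit derivatives $\nabla\underline{r}_y=-(x-y)/R_0$, $|\nabla\underline{r}_y|^2-1=-2\underline{r}_y/R_0$, $\Delta\underline{r}_y=-d/R_0$. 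With $r:=\underline{r}_y(x)$ the computation should collapse to
\[
\varepsilon\Delta\underline{\phi}_y^{\varepsilon}-\tfrac{1}{\varepsilon}W'(\underline{\phi}_y^{\varepsilon})+g^{\varepsilon}\sqrt{2W(\underline{\phi}_y^{\varepsilon})} = \bigl(1-q^{\varepsilon}(r)^2\bigr)\Bigl[\tfrac{4r\,q^{\varepsilon}(r)}{\varepsilon R_0}+g^{\varepsilon}(x,t)-\tfrac{d}{R_0}\Bigr].
\]
Because $1-(q^{\varepsilon})^2>0$ strictly and $r\,q^{\varepsilon}(r)=r\tanh(r/\varepsilon)\geq 0$ for every $r\in\R$ (same sign), the subsolution property reduces to pointwise non-negativity of the bracket.

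Next I would split according to $|x-y|$. In the \emph{inner} regime $|x-y|\leq R_0+\sqrt{\varepsilon}$, the hypothesis $B_{R_0+2\sqrt{\varepsilon}}(y)\subset O_+$ gives $B_{\sqrt{\varepsilon}}(x)\subset O_+$, hence $\mathrm{dist}(x,\partial O_+)\geq \sqrt{\varepsilon}$; combined with the disjoint-closures condition (which gives $\mathrm{dist}(\overline{O_+},\overline{O_-})>0$), this yields $s(x)\geq \sqrt{\varepsilon}$ for $\varepsilon$ small. Reading off \eqref{eq:forcing}, the cutoff $\eta_{\varepsilon}(s(x)-\tfrac{1}{2}\sqrt{\varepsilon})$ has argument $\geq 1$ and therefore vanishes, so $g^{\varepsilon}(x,t)\equiv d/R_0$ there and the bracket becomes the non-negative quantity $4rq^{\varepsilon}(r)/(\varepsilon R_0)$. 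In the \emph{outer} regime $|x-y|>R_0+\sqrt{\varepsilon}$, setting $\tau:=|x-y|-R_0>\sqrt{\varepsilon}$ gives $|r|\geq \tau>\sqrt{\varepsilon}$, so
\[
\tfrac{4r\,q^{\varepsilon}(r)}{\varepsilon R_0}=\tfrac{4|r|\tanh(|r|/\varepsilon)}{\varepsilon R_0}\geq \tfrac{2}{\sqrt{\varepsilon}\,R_0}
\]
for $\varepsilon$ small enough that $\tanh(1/\sqrt{\varepsilon})\geq 1/2$. Meanwhile Proposition \ref{prop:VP-and-multiplier} and \eqref{eq:forcing} yield the uniform bound $|g^{\varepsilon}(x,t)|\leq C\varepsilon^{-\alpha/2}$, so the assumption $\alpha<1$ (which makes $\varepsilon^{-\alpha/2}=o(\varepsilon^{-1/2})$) is exactly what is needed: the geometric boost dominates $d/R_0+|g^{\varepsilon}|$ for small $\varepsilon$ and the bracket is positive.

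The super-solution property of $\overline{\phi}_z^{\varepsilon}=q^{\varepsilon}(-\underline{r}_z)=-q^{\varepsilon}(\underline{r}_z)$ follows by an identical computation combined with the symmetries $W(-\phi)=W(\phi)$ and $W'(-\phi)=-W'(\phi)$: the analogous bracket is $[-4r\,q^{\varepsilon}(r)/(\varepsilon R_0)+g^{\varepsilon}+d/R_0]$ with $r=\underline{r}_z$, and the requirement flips to non-positivity, verified in the inner regime by $g^{\varepsilon}=-d/R_0$ on $B_{R_0+\sqrt{\varepsilon}}(z)\subset O_-$ and in the outer regime by the same $\varepsilon^{-1/2}$-versus-$\varepsilon^{-\alpha/2}$ comparison. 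The main obstacle will be the outer regime: it is the concrete manifestation of the multiplier-versus-obstacle-forcing comparison advertised in the introduction, and the argument succeeds precisely because the penalization exponent $\alpha$ in \eqref{eq:lambda} is strictly less than $1$ --- the radial ansatz delivers a geometric over-penalty of order $\varepsilon^{-1/2}$ that strictly dominates the worst-case multiplier of order $\varepsilon^{-\alpha/2}$, a slack that would collapse if $\alpha\geq 1$.
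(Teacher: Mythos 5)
Your proposal is correct and takes essentially the same approach as the paper: a pointwise elliptic inequality on the radial ansatz, split into an inner region where the forcing is exactly $d/R_0$ and a geometric sign argument suffices, and an outer region where $|r|\geq\sqrt{\varepsilon}$ forces the $\tanh$-factor to overwhelm the $C\varepsilon^{-\alpha/2}$ bound on $g^{\varepsilon}$ coming from Proposition \ref{prop:VP-and-multiplier}. The only cosmetic differences are that the paper first changes variables to $r^{\varepsilon}=(q^{\varepsilon})^{-1}(\phi^{\varepsilon})$ and then substitutes $\underline{r}_0$ (equivalent to your direct computation, which factors out $(1-(q^{\varepsilon})^2)>0$), and that the paper splits the inner zone into two sub-cases $|x|\leq R_0$ and $R_0\leq|x|\leq R_0+\sqrt{\varepsilon}$ where you unify them via the observation $r\,q^{\varepsilon}(r)\geq0$; both are the same argument.
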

\begin{proof}
We prove only the first statement as the second statement can be shown similarly.

Without loss of generality, we may assume $y=0$. Due to Proposition \ref{prop:strong-max-principle}, the function $r^{\varepsilon}:=(q^{\varepsilon})^{-1 }(\phi^{\varepsilon})$ is defined.

By computation from the first equation of \eqref{eq:phi-epsilon} and the definition $r^{\varepsilon}=(q^{\varepsilon})^{-1 }(\phi^{\varepsilon})$, we see that $r^{\varepsilon}$ solves
\begin{align*}
r^{\varepsilon}_t-\Delta r^{\varepsilon}+\frac{2q^{\varepsilon}(r^{\varepsilon})}{\varepsilon}\left(|\nabla r^{\varepsilon}|^2-1\right)-g^{\varepsilon}=0\qquad\text{in }\Omega\times(0,\infty).
\end{align*}
We substitute $\underline{r}_0$ to get
\begin{align*}
(\underline{r}_0)^{\varepsilon}_t-\Delta \underline{r}_0^{\varepsilon}+\frac{2q^{\varepsilon}(\underline{r}_0^{\varepsilon})}{\varepsilon}\left(|\nabla \underline{r}_0^{\varepsilon}|^2-1\right)-g^{\varepsilon}=\frac{d}{R_0}+\frac{2q^{\varepsilon}(\underline{r}_0^{\varepsilon})}{\varepsilon}\left(\frac{|x|^2}{R_0^2}-1\right)-g^{\varepsilon}.
\end{align*}
Our goal is to show that the right-hand side is nonpositive.

On the region $\{x:|x|\leq R_0\}$, we have $q^{\varepsilon}(\underline{r}_0(x))\geq0$ and $\frac{|x|^2}{R_0^2}-1\leq0$, and thus,
\begin{align*}
\frac{d}{R_0}+\frac{2q^{\varepsilon}(\underline{r}_0^{\varepsilon})}{\varepsilon}\left(\frac{|x|^2}{R_0^2}-1\right)-g^{\varepsilon}\leq\frac{d}{R_0}-\frac{d}{R_0}=0.
\end{align*}

On the region $\{x:R_0\leq |x|\leq R_0+\sqrt{\varepsilon}\}$, we have $q^{\varepsilon}(\underline{r}_0(x))\leq0$ and $\frac{|x|^2}{R_0^2}-1\geq0$, and thus,
\begin{align*}
\frac{d}{R_0}+\frac{2q^{\varepsilon}(\underline{r}_0^{\varepsilon})}{\varepsilon}\left(\frac{|x|^2}{R_0^2}-1\right)-g^{\varepsilon}\leq\frac{d}{R_0}-\frac{d}{R_0}=0.
\end{align*}

On the region $\{x:|x|\geq R_0+\sqrt{\varepsilon}\}$, it holds that
\begin{align*}
\frac{2q^{\varepsilon}(\underline{r}_0^{\varepsilon})}{\varepsilon}\left(\frac{|x|^2}{R_0^2}-1\right)\leq\frac{2}{\varepsilon}\tanh{\left(-\frac{2R_0\sqrt{\varepsilon}+\varepsilon}{2R_0\varepsilon}\right)}\frac{2R_0\sqrt{\varepsilon}+\varepsilon}{R_0^2}\leq-\frac{4}{R_0\sqrt{\varepsilon}}\tanh{\left(\frac{1}{\sqrt{\varepsilon}}\right)},
\end{align*}
and therefore,
\begin{align*}
\frac{d}{R_0}+\frac{2q^{\varepsilon}(\underline{r}_0^{\varepsilon})}{\varepsilon}\left(\frac{|x|^2}{R_0^2}-1\right)-g^{\varepsilon}\leq\frac{d}{R_0}-\frac{4}{R_0\sqrt{\varepsilon}}\tanh{\left(\frac{1}{\sqrt{\varepsilon}}\right)}+\max\left\{|\lambda^{\varepsilon}(t)|,\frac{d}{R_0}\right\}.
\end{align*}
By \eqref{eq:estimate-multiplier}, we have
\begin{align*}
\frac{d}{R_0}+\frac{2q^{\varepsilon}(\underline{r}_0^{\varepsilon})}{\varepsilon}\left(\frac{|x|^2}{R_0^2}-1\right)-g^{\varepsilon}\leq\frac{d}{R_0}+C\varepsilon^{-\alpha/2}-\frac{4}{R_0\sqrt{\varepsilon}}\tanh{\left(\frac{1}{\sqrt{\varepsilon}}\right)}
\end{align*}
for some constant $C>0$, and if $\varepsilon\in(0,1)$ is sufficiently small, we see that the right-hand side is nonpositive, and we complete the proof.
\end{proof}

We state the following proposition but omit the proof due to similarity to \cite[Lemma 4.3, Proposition 4.4]{T21}.

\begin{proposition}\label{prop:barrier-functions}
Suppose that $B_{R_0+2\sqrt{\varepsilon}}(y)\subset O_+$ and $B_{R_0+2\sqrt{\varepsilon}}(z)\subset O_-$. Then, for sufficiently small $\varepsilon\in(0,1)$, it holds that
\begin{align*}
\underline{\phi}^{\varepsilon}_y(x)\leq\phi^{\varepsilon}(x,t)\leq\overline{\phi}^{\varepsilon}_z(x)\qquad\text{for all }(x,t)\in\mathbb{R}^d\times[0,\infty).
\end{align*}
\end{proposition}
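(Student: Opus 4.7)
The plan is to combine the sub- and super-solution properties from Lemma~\ref{lem:barrier-functions} with a parabolic comparison argument, after verifying the inequalities at the initial time. Throughout, I fix $T>0$ and work on $\mathbb{R}^d\times[0,T]$ with the periodic extension of $\phi^\varepsilon$. Since $g^\varepsilon(x,t)$ is a prescribed function once $\phi^\varepsilon$ is fixed, the nonlocal dependence of $\lambda^\varepsilon(t)$ on $\phi^\varepsilon$ plays no role when $\phi^\varepsilon$ is tested against a fixed barrier.

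\textbf{Initial comparison.} Since $q^\varepsilon(r)=\tanh(r/\varepsilon)$ is strictly increasing, the inequality $\underline{\phi}^\varepsilon_y\le\phi_0^\varepsilon$ reduces to $\underline{r}_y(x)\le\widetilde{r}_0^{\,i}(x)$ for all $x$. From the containment $B_{R_0+2\sqrt\varepsilon}(y)\subset O_+$ together with the separation $\mathrm{dist}(\partial O,M_0)>0$, I get, for $i$ sufficiently large (hence $\varepsilon$ small), that $\widetilde{r}_0^{\,i}(y)\ge R_0+c_0$ for some $c_0>0$ independent of $\varepsilon$. The $1$-Lipschitz bound from \eqref{eq:derivative-r} then gives $\widetilde{r}_0^{\,i}(x)\ge R_0+c_0-|x-y|$, and a direct calculation yields
\[
R_0+c_0-|x-y|-\underline{r}_y(x)=c_0+\frac{(|x-y|-R_0)^2}{2R_0}\ge c_0>0.
\]
The upper inequality $\phi_0^\varepsilon\le\overline{\phi}^\varepsilon_z$ follows by the analogous reasoning using $B_{R_0+2\sqrt\varepsilon}(z)\subset O_-$.

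\textbf{Propagation.} Setting $w:=\phi^\varepsilon-\underline{\phi}^\varepsilon_y$ and subtracting the subsolution inequality from Lemma~\ref{lem:barrier-functions} from \eqref{eq:phi-epsilon} gives
\[
\varepsilon w_t-\varepsilon\Delta w+\frac{W'(\phi^\varepsilon)-W'(\underline{\phi}^\varepsilon_y)}{\varepsilon}-g^\varepsilon\bigl(\sqrt{2W(\phi^\varepsilon)}-\sqrt{2W(\underline{\phi}^\varepsilon_y)}\bigr)\ge 0.
\]
Both $\phi^\varepsilon$ and $\underline{\phi}^\varepsilon_y$ take values strictly in $(-1,1)$ by Proposition~\ref{prop:strong-max-principle} and direct inspection of $\tanh$, and on this range $W'$ and $r\mapsto\sqrt{2W(r)}=1-r^2$ are Lipschitz. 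Factoring $w$ from the zero-order differences reduces the inequality to the linear parabolic form
\[
\varepsilon w_t-\varepsilon\Delta w+c(x,t)\,w\ge 0,
\]
with $c\in L^\infty(\mathbb{R}^d\times[0,T])$, where the $L^\infty$ bound on $c$ uses $|g^\varepsilon|\le C\varepsilon^{-\alpha/2}$ from \eqref{eq:estimate-multiplier}. Combined with $w(\cdot,0)\ge0$, a standard Gronwall-type maximum principle on the periodic domain yields $w\ge 0$ throughout $\mathbb{R}^d\times[0,T]$, and the bound $\phi^\varepsilon\le\overline{\phi}^\varepsilon_z$ follows by the symmetric argument.

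The main technical obstacle is the initial comparison: it is not enough that both profiles are close to $\pm1$ on the relevant region, so one must quantitatively compare $\underline{r}_y$ with the smoothed profile $\widetilde{r}_0^{\,i}$. This is precisely what the extra $2\sqrt\varepsilon$ of room in $B_{R_0+2\sqrt\varepsilon}(y)\subset O_+$ buys us, in combination with the fixed geometric separation $\mathrm{dist}(\partial O,M_0)>0$; after that the time propagation is routine because $\lambda^\varepsilon(t)$ enters only through a prescribed forcing that is identical on both sides of the comparison.
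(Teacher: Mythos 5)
Your proof is correct and follows the standard two-step strategy (verify the comparison at $t=0$, then propagate by a linear parabolic comparison principle) that the paper implicitly relies on when it refers the reader to \cite[Lemma 4.3, Proposition 4.4]{T21}. The observation that $g^\varepsilon$ is a \emph{prescribed} coefficient once $\phi^\varepsilon$ is fixed, so that the nonlocality of $\lambda^\varepsilon$ never enters the comparison, is exactly the right point to flag; together with Lemma~\ref{lem:barrier-functions} and Proposition~\ref{prop:strong-max-principle} this reduces the propagation step to a routine linear maximum principle, and your algebraic identity $R_0+c_0-|x-y|-\underline r_y(x)=c_0+\tfrac{(|x-y|-R_0)^2}{2R_0}$ for the initial step checks out.

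One small loose end: you assert $\widetilde r_0^{\,i}(y)\ge R_0+c_0$, but the paper only pins down $\widetilde r_0^{\,i}$ through the derivative bounds~\eqref{eq:derivative-r} and the identity $\widetilde r_0^{\,i}=r_0^{\,i}$ \emph{near} $M_0^i$; in principle a permissible smoothing could flatten $\widetilde r_0^{\,i}$ below $R_0$ far from $M_0^i$, which would break the initial comparison. What saves you is that any smoothing consistent with the well-preparedness requirements (and in particular the one in \cite{T21}) keeps $\widetilde r_0^{\,i}$ within $O(\varepsilon_i)$ of $r_0^{\,i}$ wherever $|r_0^{\,i}|$ is of order one, since $r_0^{\,i}$ is $1$-Lipschitz; combined with $\mathrm{dist}(y,M_0^{\,i})\ge R_0+\mathrm{dist}(\partial O_+,M_0)/2$ for $i$ large, this gives $\widetilde r_0^{\,i}(y)\ge R_0+c_0$. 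It would tighten the write-up to say so explicitly rather than leaving the reader to infer the behaviour of the smoothing away from $M_0^i$. Apart from that, the argument is complete and matches the paper's intended route.
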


\section{A $L^2$-estimate of the forcing}\label{sec:L2-estimate}
Next, we prove the following estimate on the $L^2$-regularity of the forcing term $g^{\varepsilon}$. The control of integrations on the set $\Omega\setminus O$ where the surfaces are allowed to flow is inspired by \cite{T23,BS97} and the references therein for the related part. The main focus is on the simultaneous treatment of the integrations near the boundary $\partial O$.

\begin{proposition}\label{prop:L2-estimate}
There exists a number $\varepsilon_1\in(0,1)$ such that for any $T>0$, there exists a constant $C_T>0$ depending on $T$ such that
\begin{align*}
\sup_{\varepsilon\in(0,\varepsilon_1)}\int_0^T|\lambda^{\varepsilon}(t)|^2dt\leq C_T.
\end{align*}
\end{proposition}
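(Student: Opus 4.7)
The plan is to derive an ODE-type identity for $\lambda^\varepsilon$ by testing \eqref{eq:phi-epsilon} against $\eta_\varepsilon(s)K$ with $K:=\sqrt{2W(\phi^\varepsilon)}=1-(\phi^\varepsilon)^2$, establish a lower bound on an associated quantity, and then extract the $L^2$-in-time bound by absorption.

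First I would multiply \eqref{eq:phi-epsilon} by $\eta_\varepsilon(s) K$ and integrate over $\Omega$. Since $k'(\phi^\varepsilon)=K$, the time-derivative term yields $\varepsilon\int\eta_\varepsilon K\,\phi^\varepsilon_t\,dx = -\varepsilon^{1+\alpha}\,d\lambda^\varepsilon/dt$ by the definition \eqref{eq:lambda}; the forcing term yields $\lambda^\varepsilon\int\eta_\varepsilon^2 K^2\,dx$ because $g^\varepsilon=\lambda^\varepsilon\eta_\varepsilon$ on $\mathrm{supp}(\eta_\varepsilon)\subset\{s\le\tfrac12\sqrt\varepsilon\}$. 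Integration by parts on the Laplacian term combined with the identities $W'(\phi^\varepsilon)K=-4\phi^\varepsilon W(\phi^\varepsilon)$ and $\varepsilon|\nabla\phi^\varepsilon|^2+2W(\phi^\varepsilon)/\varepsilon=2e^\varepsilon$ yields the key identity
\[
\varepsilon^{1+\alpha}\frac{d\lambda^\varepsilon}{dt}+\lambda^\varepsilon F(t)=B_1(t)+B_2(t),
\]
where $F(t):=\int\eta_\varepsilon^2 K^2\,dx$, $B_1(t):=\varepsilon\int\eta_\varepsilon'(s)K\,(\nabla s\cdot\nabla\phi^\varepsilon)\,dx$, and $B_2(t):=-4\int\eta_\varepsilon\phi^\varepsilon e^\varepsilon\,dx$.

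Second, I would show $F(t)\ge c_0\varepsilon$ uniformly in $t\in[0,T]$ and $\varepsilon$ small. Combining \eqref{eq:VP-from-energy-dissipation} (approximate volume preservation) with \eqref{eq:initial-volume-lower-bound} gives $\bigl|\fint_{\Omega\setminus O}k(\phi^\varepsilon(t))\,dx\bigr|\le\tfrac23-\omega$ for $\varepsilon$ small, forcing both phases $\{\phi^\varepsilon\approx\pm1\}$ in $\Omega\setminus O$ to have $\mathcal{L}^d$-measure bounded below; an isoperimetric-type argument then implies that the transition layer $\{|\phi^\varepsilon|\le 1-\delta\}\cap(\Omega\setminus O)$ has measure at least $c\varepsilon$, hence $F(t)\ge c_0\varepsilon$. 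The boundary term $B_1$ is exponentially small: since $\mathrm{supp}(\eta_\varepsilon')\subset\{c_1\sqrt\varepsilon\le s\le c_2\sqrt\varepsilon\}$ lies strictly inside $O$ at distance of order $\sqrt\varepsilon$ from $\partial O$, Proposition~\ref{prop:barrier-functions} gives $|\phi^\varepsilon|\ge 1-O(e^{-c/\sqrt\varepsilon})$ there, so $K=O(e^{-c/\sqrt\varepsilon})$; combined with the energy bound $\|\nabla\phi^\varepsilon\|_{L^2}\le C/\sqrt\varepsilon$ this yields $|B_1(t)|\le C\varepsilon^{1/4}e^{-c/\sqrt\varepsilon}$.

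Third, multiplying the identity by $\lambda^\varepsilon$ and integrating on $[0,T]$ (using $\lambda^\varepsilon(0)=0$),
\[
\tfrac{\varepsilon^{1+\alpha}}{2}|\lambda^\varepsilon(T)|^2+\int_0^T F\,|\lambda^\varepsilon|^2\,dt=\int_0^T\lambda^\varepsilon(B_1+B_2)\,dt.
\]
Applying Cauchy--Schwarz and Young's inequality, using the lower bound $F\ge c_0\varepsilon$ together with the exponential smallness of $B_1$, the estimate reduces to showing $\int_0^T|B_2(t)|^2\,dt\le C\varepsilon^2$. This is the principal obstacle because $|B_2|=O(1)$ pointwise. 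The required cancellation comes from the near-$\tanh$ structure of the transition profile, where on the exact 1D profile $\int\phi^\varepsilon e^\varepsilon\,dx=0$ by antisymmetry. Quantitatively, I would decompose $e^\varepsilon=\xi^\varepsilon+2W(\phi^\varepsilon)/\varepsilon$, use the identity $\phi^\varepsilon W(\phi^\varepsilon)=-\tfrac{1}{12}\partial_\phi(K^3)$ together with a coarea-type calculation to reduce $\int\eta_\varepsilon\phi^\varepsilon W/\varepsilon\,dx$ to order $\varepsilon$, and control the discrepancy contribution $\int\eta_\varepsilon\phi^\varepsilon\xi^\varepsilon\,dx$ using the dissipation $\int_0^T\int\varepsilon(\phi^\varepsilon_t)^2\,dx\,dt\le C$ from Proposition~\ref{prop:energy-dissipation}, following the scheme of \cite{T23,BS97}. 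The extra boundary-layer terms introduced by the cutoff $\eta_\varepsilon$---absent in \cite{T23,BS97}---are again exponentially small by Proposition~\ref{prop:barrier-functions}.
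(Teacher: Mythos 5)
Your approach is genuinely different from the paper's. The paper multiplies \eqref{eq:phi-epsilon} by $\nabla\phi^{\varepsilon}\cdot\zeta$ with the vector field $\zeta=\nabla\widetilde{u}$, where $\widetilde{u}$ is a $C^2$-extension of the Neumann solution $u$ to $-\Delta u=k(\phi^{\varepsilon})\ast\Psi_{\delta}-\fint_{\Omega\setminus O}k(\phi^{\varepsilon})\ast\Psi_{\delta}$ on $\Omega\setminus\overline{O}$; this isolates $\lambda^{\varepsilon}$ with the $O(1)$ coefficient $\int_{\Omega\setminus\overline{O}}k(\phi^{\varepsilon})(-\Delta u)\,dx\geq\tfrac12\omega^2\mathcal{L}^d(\Omega\setminus O)$, so all error terms need only be bounded uniformly in $\varepsilon$. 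Your scalar test $\eta_{\varepsilon}(s)K$ produces a clean ODE identity for $\lambda^{\varepsilon}$ (your algebra checks out), but the coefficient $F(t)=\int\eta_{\varepsilon}^2K^2\,dx$ is of order $\varepsilon$ both from above (since $K^2=2W(\phi^{\varepsilon})$, so $F\leq 2\int W\,dx\leq C\varepsilon$ by \eqref{eq:estimate-from-energy-dissipation}) and, plausibly, from below. That structural mismatch forces the error terms to be small of order $\varepsilon$ rather than merely $O(1)$, and that is where the proof fails.

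Concretely, you correctly identify that one needs $\int_0^T|B_2|^2\,dt\lesssim\varepsilon^2$ where $B_2=-4\int\eta_{\varepsilon}\phi^{\varepsilon}e^{\varepsilon}\,dx$ and $e^{\varepsilon}:=\tfrac{\varepsilon|\nabla\phi^{\varepsilon}|^2}{2}+\tfrac{W(\phi^{\varepsilon})}{\varepsilon}$. But all available a priori information gives only $|B_2|\leq 4\int e^{\varepsilon}\,dx\leq C$, and the gain of a full factor of $\varepsilon$ encodes precisely the ``optimal profile'' structure of $\phi^{\varepsilon}$, which is not available at this stage. The decomposition $e^{\varepsilon}=\xi^{\varepsilon}+2W(\phi^{\varepsilon})/\varepsilon$ does not rescue this: the term $\tfrac1\varepsilon\int\eta_{\varepsilon}\phi^{\varepsilon}W(\phi^{\varepsilon})\,dx$ carries no $|\nabla\phi^{\varepsilon}|$ factor, so coarea is inapplicable, and the identity $\phi W(\phi)=-\tfrac1{12}\partial_\phi(K^3)$ only rewrites the chain-rule relation $\phi^{\varepsilon}W(\phi^{\varepsilon})\nabla\phi^{\varepsilon}=-\tfrac1{12}\nabla\bigl(K^3(\phi^{\varepsilon})\bigr)$, which is a different integral. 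The energy bound gives $\int W\,dx\leq C\varepsilon$, hence $\tfrac1\varepsilon\int\phi^{\varepsilon}W\,dx=O(1)$, not $O(\varepsilon)$; the extra factor requires equidistribution of $e^{\varepsilon}$ on the two sides of the zero level set, which is exactly the content of the vanishing discrepancy. Likewise, $\int\eta_{\varepsilon}\phi^{\varepsilon}\xi^{\varepsilon}\,dx$ cannot be controlled at this point: the only $\varepsilon$-independent bound on $\xi^{\varepsilon}$ available before Section \ref{sec:rectifiability-integrality} is the one-sided estimate of Proposition \ref{prop:upper-bound-discrepancy}, whereas two-sided smallness is Theorem \ref{thm:vanishing-xi}, whose proof depends on Proposition \ref{prop:L2-estimate} through \eqref{eq:monotonicity-formula-integrating-factor}, Proposition \ref{prop:upper-bound-density-ratio}, and Lemma \ref{lem:L2-g-1-varepilson}. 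Invoking it here would be circular. In short, the paper's vector-field test function is not a stylistic choice: it is the device that makes the coefficient of $\lambda^{\varepsilon}$ an order-one quantity and thereby sidesteps exactly the cancellation your $B_2$ estimate needs but cannot obtain.
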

\begin{proof}
Let $\zeta$ be a test vector field on $\Omega$ (or a periodic test vector field on $\mathbb{R}^d$). We multiply the first equation of \eqref{eq:phi-epsilon} by $\nabla\phi^{\varepsilon}\cdot\zeta$ and perform integration by parts to obtain
\begin{align}
\int_{\Omega}\varepsilon\phi^{\varepsilon}_t\nabla\phi^{\varepsilon}\cdot\zeta dx+\int_{\Omega}\varepsilon\nabla\phi^{\varepsilon}&\otimes\nabla\phi^{\varepsilon}:\nabla\zeta dx-\int_{\Omega}\left(\frac{\varepsilon|\nabla\phi^{\varepsilon}|^2}{2}+\frac{W(\phi^{\varepsilon})}{\varepsilon}\right)\mathrm{div}(\zeta)dx\notag\\
&=-\int_{\Omega}g^{\varepsilon}k(\phi^{\varepsilon})\mathrm{div}(\zeta)dx-\int_{\Omega}k(\phi^{\varepsilon})\nabla g^{\varepsilon}\cdot\zeta dx.\label{eq:integration-by-parts}
\end{align}

\medskip

Let $\delta>0$ be a positive number to be chosen later and $\Psi_{\delta}\in C_c^{\infty}(B_{\delta}(0))$ a standard mollifier. We consider the solution $u$ to the following equation (for a fixed $t\geq0$):
\begin{align}\label{eq:u}
\begin{cases}
-\Delta u=k(\phi^{\varepsilon}(t))\ast\Psi_{\delta}-\fint_{\Omega\setminus O}k(\phi^{\varepsilon}(t))\ast\Psi_{\delta}dx \quad &\text{in}\quad\Omega\setminus\overline{O},\\
\nabla u\cdot n=0 \quad &\text{on}\quad\partial(\Omega\setminus O),\\
\int_{\Omega\setminus\overline{O}}u(x) dx=0. \quad &
\end{cases}
\end{align}
The solution $u$ satisfies a classical Schauder estimate upto the boundary (see \cite{N14} for instance) so that
\begin{align}\label{eq:schauder-estimate}
\|u\|_{C^{2,\beta}(\overline{\Omega\setminus O})}\leq C(1+\delta^{-1})
\end{align}
for some constant $C>0$. Here, $n$ denotes the unit normal vector on $\partial(\Omega\setminus O)$ pointing inward $O$, and we have used Proposition \ref{prop:strong-max-principle}.

By \cite{W34} (see also \cite[Chapter 1]{M66}), there exists a function $\widetilde{u}\in C^2(\Omega)$ (or a periodic function on $\mathbb{R}^d$) such that $\widetilde{u}\lfloor_{\Omega\setminus O}=u$ and that
\begin{align}\label{eq:C2-u-tilde}
\|\widetilde{u}\|_{C^2(\Omega)}\leq C_d\|u\|_{C^2(\overline{\Omega\setminus O})}.
\end{align}
On the whole domain $\Omega$, we take $\zeta:=\nabla\widetilde{u}$.

\medskip

We estimate the terms of \eqref{eq:integration-by-parts} by beginning with the left-hand side. Due to \eqref{eq:schauder-estimate}, \eqref{eq:C2-u-tilde}, and \eqref{eq:estimate-from-energy-dissipation}, the terms of \eqref{eq:integration-by-parts} on the left-hand side satisfy
\begin{align}
\left|\int_{\Omega}\varepsilon\phi^{\varepsilon}_t\nabla\phi^{\varepsilon}\cdot\zeta dx\right|&\leq C_{\delta}\left(\int_{\Omega}\varepsilon(\phi^{\varepsilon}_t)^2dx\right)^{1/2}\left(\int_{\Omega}\varepsilon|\nabla\phi^{\varepsilon}|^2dx\right)^{1/2}\notag\\
&\leq C_{\delta}\left(\int_{\Omega}\varepsilon(\phi^{\varepsilon}_t)^2dx\right)^{1/2}\label{eq:LHS-1}
\end{align}
and
\begin{align}\label{eq:LHS-2}
\left|\int_{\Omega}\varepsilon\nabla\phi^{\varepsilon}\otimes\nabla\phi^{\varepsilon}:\nabla\zeta dx\right|\leq C_{\delta}\int_{\Omega}\varepsilon|\nabla\phi^{\varepsilon}|^2dx\leq C_{\delta}.
\end{align}
Lastly,
\begin{align}\label{eq:LHS-3}
\left|\int_{\Omega}\left(\frac{\varepsilon|\nabla\phi^{\varepsilon}|^2}{2}+\frac{W(\phi^{\varepsilon})}{\varepsilon}\right)\mathrm{div}(\zeta)dx\right|\leq C_{\delta}\mu_t^{\varepsilon}(\chi_{\Omega})\leq C_{\delta}.
\end{align}

We estimate the term $\int_{\Omega}k(\phi^{\varepsilon})\nabla g^{\varepsilon}\cdot\zeta dx$. Let $\varepsilon\in(0,1)$ be small enough that $\nabla s$ is defined on the region $\{x:0\leq s(x)\leq \sqrt{\varepsilon}\}$, which serves as a local extension of the unit normal vector $n$ on $\partial(\Omega\setminus O)$. We first note that as $\nabla s\cdot\nabla\widetilde{u}$ vanishes on $\partial(\Omega\setminus O)$, we have
\begin{align*}
|\nabla s\cdot\nabla\widetilde{u}|\leq\|\widetilde{u}\|_{C^2(\Omega)}\sqrt{\varepsilon}
\end{align*}
on the region $\{x:0\leq s(x)\leq \sqrt{\varepsilon}\}$. Utilizing this fact together with \eqref{eq:schauder-estimate}, \eqref{eq:C2-u-tilde}, \eqref{eq:estimate-multiplier} Proposition \ref{prop:strong-max-principle}, we have
\begin{align}
\left|\int_{\Omega}k(\phi^{\varepsilon})\nabla g^{\varepsilon}\cdot\zeta dx\right|\notag&\leq\int_{\{x:0\leq s(x)\leq \sqrt{\varepsilon}\}}|k(\phi^{\varepsilon})|\cdot|\nabla g^{\varepsilon}\cdot\nabla\widetilde{u}|dx\notag\\
&\leq\int_{\{x:0\leq s(x)\leq \sqrt{\varepsilon}\}}C\varepsilon^{-1/2}\max\left\{|\lambda^{\varepsilon}(t)|,\frac{d}{R_0}\right\}|\nabla s\cdot\nabla\widetilde{u}|dx\notag\\
&\leq C_{d,\delta}\sqrt{\varepsilon}\max\left\{|\lambda^{\varepsilon}(t)|,\frac{d}{R_0}\right\}\notag\\
&\leq C_{d,\delta}\varepsilon^{\frac{1-\alpha}{2}}\qquad\text{for }\varepsilon\in(0,1)\text{ small enough}.\label{eq:RHS-1}
\end{align}
In transition from the third inequality to the fourth, we used the fact that $\mathcal{L}^d\left(\{x:0\leq s(x)\leq\sqrt{\varepsilon}\}\right)\leq C\sqrt{\varepsilon}$ for some constant $C>0$ that depends on the given obstacle $O$.

We estimate the term $\int_{\Omega}g^{\varepsilon}k(\phi^{\varepsilon})\mathrm{div}(\zeta)dx$. We note that $\mathrm{div}(\zeta)=\Delta \widetilde{u}$ satisfies $\|\mathrm{div}(\zeta)\|_{L^{\infty}(\Omega)}\leq C_{d,\delta}$ thanks to \eqref{eq:schauder-estimate}, \eqref{eq:C2-u-tilde}, and that $|k(\phi^{\varepsilon})|$ is bounded by an absolute number. By the definition of the forcing $g^{\varepsilon}$, the integration on the set $\{x:s(x)\geq0\}$ satisfies
\begin{align}
\left|\int_{\{x:s(x)\geq0\}}g^{\varepsilon}k(\phi^{\varepsilon})\mathrm{div}(\zeta)dx\right|&\leq\left|\int_{\text{$\{x:0\leq s(x)\leq\sqrt{\varepsilon}\}$}}g^{\varepsilon}k(\phi^{\varepsilon})\mathrm{div}(\zeta)dx\right|+\left|\int_{\{x:s(x)>\sqrt{\varepsilon}\}}g^{\varepsilon}k(\phi^{\varepsilon})\mathrm{div}(\zeta)dx\right|\notag\\
&\leq C_{d,\delta}\sqrt{\varepsilon}\max\left\{|\lambda^{\varepsilon}(t)|,\frac{d}{R_0}\right\}+C_{d,\delta,R_0}\notag\\
&\leq C_{d,\delta,R_0}\quad\text{for $\varepsilon\in(0,1)$ small enough}.\label{eq:RHS-2-1}
\end{align}
Also, the integration on the other set $\{x:s(x)<0\}$ satisfies
\begin{align*}
-\int_{\{x:s(x)<0\}}g^{\varepsilon}k(\phi^{\varepsilon})\mathrm{div}(\zeta)dx=\lambda^{\varepsilon}(t)\int_{\Omega\setminus\overline{O}}k(\phi^{\varepsilon})(-\Delta u)dx
\end{align*}

\medskip

Now, we aim to prove
\begin{align}\label{eq:lower-bound}
\int_{\Omega\setminus\overline{O}}k(\phi^{\varepsilon})(-\Delta u)dx\geq\frac{1}{2}\omega^2\mathcal{L}^d(\Omega\setminus O),
\end{align}
where $\omega\in(0,1)$ is from \eqref{eq:initial-volume-lower-bound} and $\mathcal{L}^d(\Omega\setminus O)$ is the $d$-dimensional Lebesgue measure of $\Omega\setminus O$. If this can be verified, then the above estimates (\eqref{eq:LHS-1}, \eqref{eq:LHS-2}, \eqref{eq:LHS-3}, \eqref{eq:RHS-1}, \eqref{eq:RHS-2-1}) yield
\begin{align*}
\int_0^T|\lambda^{\varepsilon}(t)|^2dt\leq \frac{C_{d,\delta,R_0}}{\omega^2\mathcal{L}^d(\Omega\setminus O)}\left(\left(\int_0^T\int_{\Omega}\varepsilon(\phi^{\varepsilon}_t)^2dxdt\right)+1+T\right),
\end{align*}
which completes the proof (with a choice of $\delta\in(0,1)$ to be taken soon that depends only on the quantities given by the problem such as $d,R_0,\omega,\mathcal{L}^d(\Omega\setminus O)$).

\medskip

First of all, from \eqref{eq:VP-from-energy-dissipation}, \eqref{eq:initial-volume-lower-bound}, and Proposition \ref{prop:strong-max-principle}, we can see that there exists $\varepsilon_1\in(0,1)$ such that
\begin{align}\label{eq:volume-lower-bound}
\frac{2}{3}-\left|\fint_{\Omega\setminus O}k(\phi^{\varepsilon})dx\right|\geq\omega>0\qquad\text{for all }\varepsilon\in(0,\varepsilon_1).
\end{align}
We write
\begin{align*}
&\int_{\Omega\setminus\overline{O}}k(\phi^{\varepsilon})(-\Delta u)dx\\
&=\int_{\Omega\setminus\overline{O}}k(\phi^{\varepsilon})\left(k(\phi^{\varepsilon})\ast\Psi_{\delta}-\fint_{\Omega\setminus O}k(\phi^{\varepsilon})\ast\Psi_{\delta}dx\right)dx\\
&=\frac{4}{9}\mathcal{L}^d(\Omega\setminus O)+\int_{\Omega\setminus O}k(\phi^{\varepsilon})^2-\frac{4}{9}dx\\
&\qquad\qquad+\int_{\Omega\setminus O}k(\phi^{\varepsilon})\left(k(\phi^{\varepsilon})\ast\Psi_{\delta}-k(\phi^{\varepsilon})\right)dx-\frac{1}{\mathcal{L}^d(\Omega\setminus O)}\left(\int_{\Omega\setminus O}k(\phi^{\varepsilon})dx\right)^2\\
&\qquad\qquad\qquad\qquad+\frac{1}{\mathcal{L}^d(\Omega\setminus O)}\int_{\Omega\setminus O}k(\phi^{\varepsilon})dx\cdot\left(\int_{\Omega\setminus O}k(\phi^{\varepsilon})dx-\int_{\Omega\setminus O}k(\phi^{\varepsilon})\ast\Psi_{\delta}dx\right).
\end{align*}
From the fact that $k(s)^2-\frac{4}{9}\geq-W(s)$ for $s\in[-1,1]$, we have from \eqref{eq:estimate-from-energy-dissipation}, Proposition \ref{prop:strong-max-principle} that
\begin{align}\label{eq:RHS-3-1}
\int_{\Omega\setminus O}k(\phi^{\varepsilon})^2-\frac{4}{9}dx\geq-C\varepsilon.
\end{align}
Also, since
\begin{align}\label{eq:modica-mortola}
\int_{\Omega}|\nabla(k(\phi^{\varepsilon}))|dx=\int_{\Omega}|\sqrt{2W(\phi^{\varepsilon})}||\nabla\phi^{\varepsilon}|dx\leq\int_{\Omega}\frac{\varepsilon|\nabla\phi^{\varepsilon}|^2}{2}+\frac{W(\phi^{\varepsilon})}{\varepsilon}dx\leq C,
\end{align}
we have
\begin{align}\label{eq:RHS_3_2}
\left|\int_{\Omega\setminus O}k(\phi^{\varepsilon})\left(k(\phi^{\varepsilon})\ast\Psi_{\delta}-k(\phi^{\varepsilon})\right)dx\right|\leq C\delta
\end{align}
and
\begin{align}\label{eq:RHS_3_3}
\left|\frac{1}{\mathcal{L}^d(\Omega\setminus O)}\int_{\Omega\setminus O}k(\phi^{\varepsilon})dx\cdot\left(\int_{\Omega\setminus O}k(\phi^{\varepsilon})dx-\int_{\Omega\setminus O}k(\phi^{\varepsilon})\ast\Psi_{\delta}dx\right)\right|\leq C\delta
\end{align}
by Lemma \ref{lem:auxiliary-lemma} (stated after the proof). Therefore, by \eqref{eq:volume-lower-bound} and the above estimates \eqref{eq:RHS-3-1}, \eqref{eq:RHS_3_2}, \eqref{eq:RHS_3_3}, we obtain
\begin{align*}
&\int_{\Omega\setminus\overline{O}}k(\phi^{\varepsilon})(-\Delta u)dx\\
&\geq\mathcal{L}^d(\Omega\setminus O)\left(\frac{4}{9}-\left(\fint_{\Omega\setminus O}k(\phi^{\varepsilon})dx\right)^2\right)-C(\varepsilon+\delta)\\
&\geq\mathcal{L}^d(\Omega\setminus O)\left(\frac{2}{3}-\left(\fint_{\Omega\setminus O}k(\phi^{\varepsilon})dx\right)\right)\left(\frac{2}{3}+\left(\fint_{\Omega\setminus O}k(\phi^{\varepsilon})dx\right)\right)-C(\varepsilon+\delta)\\
&\geq\omega^2\mathcal{L}^d(\Omega\setminus O)-C(\varepsilon+\delta)\\
&\geq\frac{1}{2}\omega^2\mathcal{L}^d(\Omega\setminus O)\qquad\text{for }\delta:=\frac{\omega^2\mathcal{L}^d(\Omega\setminus O)}{4C}\text{ and }\varepsilon\in(0,\delta).
\end{align*}
We finish the proof.
\end{proof}

We state the lemma (see \cite[Lemma 3.24]{AFP00}) used in the last part of the above proof. We keep the notation $\Psi_{\delta}\in C_c^{\infty}(B_{\delta}(0))$ for a standard mollifier.
\begin{lemma}\label{lem:auxiliary-lemma}
Let $U$ be a nonempty open subset of $\mathbb{R}^d$, and let $u\in BV(U),\ K\subset\joinrel\subset U$. For any $\delta\in(0,\mathrm{dist}(K,\partial U))$, it holds that
\begin{align*}
\int_K\left|u\ast\Psi_{\delta}-u\right|dx\leq\delta\cdot\|\nabla u\|(U).
\end{align*}
\end{lemma}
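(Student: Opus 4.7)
The plan is to prove the inequality first for $u\in C^\infty(U)\cap BV(U)$ (where $\|\nabla u\|(U)=\int_U|\nabla u|\,dx$) and then extend to arbitrary $u\in BV(U)$ by a standard density argument. The geometric point that makes the proof work is the hypothesis $\delta<\mathrm{dist}(K,\partial U)$: whenever $x\in K$ and $|y|\le\delta$, the whole segment $\{x-ty:t\in[0,1]\}$ lies inside $U$, so we can freely differentiate $u$ along it.

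For the smooth case, I would start from the pointwise identity
\[
(u*\Psi_\delta)(x)-u(x)=\int_{B_\delta(0)}\Psi_\delta(y)\bigl(u(x-y)-u(x)\bigr)\,dy
=-\int_{B_\delta(0)}\Psi_\delta(y)\int_0^1\nabla u(x-ty)\cdot y\,dt\,dy,
\]
which is just the fundamental theorem of calculus applied to $t\mapsto u(x-ty)$. Taking absolute values, using $|y|\le\delta$ on $\mathrm{supp}\,\Psi_\delta$, applying Tonelli's theorem, and integrating over $K$ gives
\[
\int_K|u*\Psi_\delta-u|\,dx\le\delta\int_{B_\delta(0)}\Psi_\delta(y)\int_0^1\!\!\int_K|\nabla u(x-ty)|\,dx\,dt\,dy.
\]
Since $K-ty\subset U$ by the hypothesis on $\delta$, the inner integral is bounded by $\int_U|\nabla u|\,dx=\|\nabla u\|(U)$ for every $y,t$ in the relevant range. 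Together with $\int\Psi_\delta=1$, this yields the claimed bound.

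For general $u\in BV(U)$, I would invoke the Meyer--Serrin / Anzellotti--Giaquinta approximation theorem (Theorem~3.9 in \cite{AFP00}): there exist $u_n\in C^\infty(U)\cap BV(U)$ with $u_n\to u$ in $L^1(U)$ and $\|\nabla u_n\|(U)\to\|\nabla u\|(U)$. The smooth-case inequality then gives $\int_K|u_n*\Psi_\delta-u_n|\,dx\le\delta\,\|\nabla u_n\|(U)$. Since convolution with $\Psi_\delta$ is continuous on $L^1(K)$ (one passes to a subsequence converging pointwise a.e.\ and uses dominated convergence on $K$), the left-hand side converges to $\int_K|u*\Psi_\delta-u|\,dx$, and the right-hand side converges to $\delta\,\|\nabla u\|(U)$.

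I do not expect any genuine obstacle here: the only point requiring slight care is ensuring the translated domains stay inside $U$ during the Fubini step, which is handled precisely by the hypothesis $\delta<\mathrm{dist}(K,\partial U)$. The approximation step is entirely standard once one has the result for smooth functions.
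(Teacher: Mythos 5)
Your proof is correct. In fact the paper does not prove this lemma at all---it simply records it with a citation to \cite[Lemma~3.24]{AFP00}---so your argument supplies the proof that the paper outsources, and it is essentially the standard one found there: establish the estimate for smooth $u$ via the fundamental theorem of calculus along the segment from $x$ to $x-y$, exploiting $\delta<\mathrm{dist}(K,\partial U)$ to keep all translates inside $U$, and then pass to general $u\in BV(U)$ by strict smooth approximation. One small stylistic remark on the last step: rather than extracting an a.e.\ convergent subsequence and invoking dominated convergence (which requires producing a dominating function for the subsequence), it is cleaner to note directly that $\|u_n*\Psi_\delta-u*\Psi_\delta\|_{L^1(K)}\le\|u_n-u\|_{L^1(U)}$, since $K-\mathrm{supp}\,\Psi_\delta\subset U$; this shows at once that both sides of the smooth-case inequality pass to the limit. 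Either route closes the argument, so there is no gap.
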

\begin{remark}\label{rmk:C2beta2}
We note that the $C^{2,\beta}$-regularity assumption on the obstacles is used in \eqref{eq:schauder-estimate}, \eqref{eq:C2-u-tilde}. To control the $C^1$-norm of the vector field $\zeta=\nabla\widetilde{u}$, we needed to keep the $C^2$-norms of the function $u$ and its extension $\widetilde{u}$ under the control.
\end{remark}

\section{Discrepancy and density ratio estimates}\label{sec:discrepancy-density-ratio-estimates}
For $\varepsilon\in(0,1),\ t\geq0$, define the Radon measure $\xi_t^{\varepsilon}$ by
\begin{align}\label{eq:def-xi}
    \xi_t^{\varepsilon}(\eta):=\frac{1}{\sigma}\int_{\Omega}\left(\frac{\varepsilon|\nabla\phi^{\varepsilon}|^2}{2}-\frac{W(\phi^{\varepsilon})}{\varepsilon}\right)\eta\, dx\qquad\text{for all }\eta\in C(\Omega),
\end{align}
and we let $d\xi^{\varepsilon}:=d\xi_t^{\varepsilon}dt$ on $\Omega\times[0,\infty)$. We call $d\xi_t^{\varepsilon}$ and $d\xi^{\varepsilon}$ the discrepancy measures. It is well known that, as well as \eqref{eq:well-preparedness}, the well-prepared initial data \eqref{eq:initial-data} have nonpositive discrepancy measure, that is, for small $\varepsilon>0$,
\begin{align}\label{eq:xi-0-nonpositive}
\xi_0^{\varepsilon}(\chi_{\Omega})\leq 0.    
\end{align}

The main goal is to prove that $d\xi^{\varepsilon}\to0$ as $\varepsilon\to0$ (Theorem \ref{thm:vanishing-xi}). This is crucial to verify the ``surface-like" behavior of our energy measures $d\mu^{\varepsilon}$ as $\varepsilon\to0$. The motivation of this framework is from \cite{I93}, which is further developed by \cite{T23} with the Lagrange multiplier for the volume constraint.

The main challenge when following the framework of \cite{T23} arises from the fact that the forcing term is no longer constant in the spatial variable due to the presence of obstacles, which fails the availability of \cite[Proposition 6]{T23}, namely the nonpositivity of $\xi^{\varepsilon}$. This then results in another term in the monotonicity formula (Proposition \ref{prop:monotonicity-formula}) from $\xi^{\varepsilon}$ which we have to control.

We resolve this technical issue by considering an upper bound of $\xi^{\varepsilon}$ (Proposition \ref{prop:upper-bound-discrepancy}), deriving an upper bound of the density ratio (Proposition \ref{prop:upper-bound-density-ratio}) as motivated from \cite{NT25} which considers a spatial forcing term. We then achieve to prove the vanishing of the discrepancy measure (Theorem \ref{thm:vanishing-xi}).


Keeping this viewpoint, we proceed to follow the framework in \cite[Section 4]{T23}. Necessary adaptations, in particular the ones that appear before proving Theorem \ref{thm:vanishing-xi}, are left for the sake of completeness.

\medskip

Before we begin, we record the following property of our forcing term in the following.

\begin{lemma}\label{lem:L2-g-1-varepilson}
There exists a constant $C>0$ such that for each $\varepsilon\in(0,1)$, we have
\begin{align}\label{eq:L-infty-g-ep}
\|g^{\varepsilon}\|_{L^{\infty}(\Omega\times[0,\infty))}\leq C\varepsilon^{-\alpha/2}.
\end{align}
Moreover, the number $\varepsilon_1\in(0,1)$ given by Proposition \ref{prop:L2-estimate} satisfies that for any $T>0$, there is $C_T>0$ such that
\begin{align}\label{eq:L-2-g-1-ep}
\sup_{\varepsilon\in(0,\varepsilon_1)}\int_0^T\int_{\Omega}\frac{1}{\varepsilon}\left|\max\{\lambda^{\varepsilon}(t),1\}\sqrt{2W(\phi^{\varepsilon}(x,t))}\right|^2\,dxdt\leq C_T.
\end{align}
\end{lemma}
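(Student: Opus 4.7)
The plan is to derive both bounds as direct consequences of the estimates already established earlier in the paper, treating the two statements separately.

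For the $L^{\infty}$ bound \eqref{eq:L-infty-g-ep}, I would just inspect the piecewise definition \eqref{eq:forcing} of $g^{\varepsilon}$. On the region $\{s(x)\leq\tfrac{1}{2}\sqrt{\varepsilon}\}$, since $0\leq\eta_{\varepsilon}\leq 1$, we have $|g^{\varepsilon}(x,t)|\leq|\lambda^{\varepsilon}(t)|$. On the remaining two regions in $\overline{O}$, we have $|g^{\varepsilon}(x,t)|\leq d/R_0$ since $0\leq 1-\eta_{\varepsilon}\leq 1$. The multiplier estimate \eqref{eq:estimate-multiplier} from Proposition \ref{prop:VP-and-multiplier} gives $|\lambda^{\varepsilon}(t)|\leq C\varepsilon^{-\alpha/2}$, and for sufficiently small $\varepsilon$ this dominates the constant $d/R_0$ as well. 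Combining the three cases yields \eqref{eq:L-infty-g-ep}.

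For \eqref{eq:L-2-g-1-ep}, I would use the elementary bound
\[
(\max\{\lambda^{\varepsilon}(t),1\})^2\leq 1+|\lambda^{\varepsilon}(t)|^2
\]
to split the integral into two pieces:
\[
\int_0^T\!\!\int_{\Omega}\frac{2W(\phi^{\varepsilon})}{\varepsilon}\,dx\,dt\;+\;\int_0^T|\lambda^{\varepsilon}(t)|^2\left(\int_{\Omega}\frac{2W(\phi^{\varepsilon})}{\varepsilon}\,dx\right)dt.
\]
The pointwise-in-time energy bound \eqref{eq:estimate-from-energy-dissipation}, combined with the definition \eqref{eq:def-mu} of $\mu_t^{\varepsilon}$, gives $\int_{\Omega}\tfrac{W(\phi^{\varepsilon})}{\varepsilon}\,dx\leq\sigma\mu_t^{\varepsilon}(\chi_{\Omega})\leq C$ uniformly in $t$ and $\varepsilon$. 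Thus the first term is bounded by $2CT$ and the second by $2C\int_0^T|\lambda^{\varepsilon}(t)|^2dt$.

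The latter $L^2$-in-time bound on $\lambda^{\varepsilon}$ is exactly what Proposition \ref{prop:L2-estimate} provides, uniformly for $\varepsilon\in(0,\varepsilon_1)$, yielding the desired $C_T$. There is no real obstacle here — both statements reduce immediately to the already-proven Propositions \ref{prop:VP-and-multiplier}, \ref{prop:energy-dissipation}, and \ref{prop:L2-estimate}; the only care needed is that the constant $\varepsilon_1$ is inherited from Proposition \ref{prop:L2-estimate} so that the $L^2$-norm of $\lambda^{\varepsilon}$ is genuinely finite.
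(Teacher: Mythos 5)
Your proof is correct and follows essentially the same route as the paper, which only cites the same ingredients (\eqref{eq:estimate-multiplier}, the energy bound \eqref{eq:estimate-from-energy-dissipation}, and Proposition \ref{prop:L2-estimate}) without spelling out the case-by-case bound on $g^{\varepsilon}$ or the splitting $(\max\{\lambda^{\varepsilon},1\})^2\leq 1+|\lambda^{\varepsilon}|^2$; you have simply written out the details that the paper leaves implicit. One cosmetic remark: since $\varepsilon\in(0,1)$ gives $\varepsilon^{-\alpha/2}>1$, the constant $d/R_0$ is dominated by $C\varepsilon^{-\alpha/2}$ for all $\varepsilon\in(0,1)$ once $C\geq d/R_0$, so no smallness of $\varepsilon$ is actually needed for \eqref{eq:L-infty-g-ep}.
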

\begin{proof}
The proof of \eqref{eq:L-infty-g-ep} follows from \eqref{eq:estimate-multiplier} and Proposition \ref{prop:strong-max-principle}.
The proof of \eqref{eq:L-2-g-1-ep} follows from the construction of $g^{\varepsilon}$, \eqref{eq:estimate-from-energy-dissipation}, and Proposition \ref{prop:L2-estimate}. 
\end{proof}

\subsection{The monotonicity formula and an upper bound of the discrepancy}\label{subsec:monotonicity-formula-upper-bound-xi}
The monotonicity formula is established in \cite{H90} for the mean curvature flow and in \cite{I93} for the Allen-Cahn equation in $\mathbb{R}^d$. We denote the backward heat kernel $\rho = \rho_{(y,s)}(x, t)$ by
$$
\rho_{(y,s)}(x, t) = \frac{1}{(4\pi (s - t))^{\frac{d-1}{2}}} e^{-\frac{|x-y|^2}{4(s-t)}}, \quad x, y \in \mathbb{R}^d, \; 0 \leq t < s.
$$

We state the monotonicity formula in the proposition, although we do not include the proof as identical to that of \cite[Proposition 2.7]{T17}.

\begin{proposition}\label{prop:monotonicity-formula}
For any $y\in\mathbb{R}^d$ and $0\leq t<s<\infty$, it holds that
\begin{align}\label{eq:monotonicity-formula}
\frac{d}{dt} \int_{\mathbb{R}^d} \rho_{(y,s)}(x, t) \, d\mu_t^\varepsilon(x)
\leq \frac{1}{2(s - t)} \int_{\mathbb{R}^d} \rho_{(y,s)}(x, t) \, d\xi_t^\varepsilon(x) + \frac{1}{2}  \int_{\mathbb{R}^d} (g^\varepsilon(x,t))^2 \rho_{(y,s)}(x, t) \, d\mu_t^\varepsilon(x).
\end{align}   
For any $y\in\mathbb{R}^d$, $T>0$, and $0\leq t_1\leq t_2<s<\infty$ and $t_2\leq T$, we have
\begin{align}\label{eq:monotonicity-formula-integrating-factor}
\int_{\mathbb{R}^{d}} \rho_{(y, s)}(x, t) d \mu_{t}^{\varepsilon}(x)\bigg|_{t=t_{2}} &\leq\left(\int_{\mathbb{R}^{d}} \rho_{(y, s)}(x, t) d \mu_{t}^{\varepsilon}(x)\bigg|_{t=t_{1}}\right) e^{\frac{1}{2} \int_{t_{1}}^{t_{2}}\max\{{|\lambda^{\varepsilon}(t)|^{2},1}\} d t}\notag\\
&\qquad\qquad+\int_{t_1}^{t_2}e^{\int_{t}^{t_2}\frac{1}{2}\max\{{|\lambda^{\varepsilon}(\tau)|^{2},1}\}\,d\tau}\int_{\mathbb{R}^d}\frac{\rho_{(y, s)}(x, t)}{2(s-t)}d\xi_{t}^{\varepsilon}\,dt\notag\\
&\leq C_T\left(\int_{\mathbb{R}^{d}} \rho_{(y, s)}(x, t) d \mu_{t}^{\varepsilon}(x)\bigg|_{t=t_{1}}+\int_{t_1}^{t_2}\int_{\mathbb{R}^d}\frac{\rho_{(y, s)}(x, t)}{2(s-t)}d\xi_{t}^{\varepsilon}(x)\,dt\right)
\end{align}
for $\varepsilon\in(0,\varepsilon_1)$, where $C_T>0,\ \varepsilon_1\in(0,1)$ are as in Proposition \ref{prop:L2-estimate} (replacing $C_T$ by a larger constant). Here, $\mu_t^{\varepsilon}$ is extended periodically on $\R^d$.
\end{proposition}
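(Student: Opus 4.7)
The plan is to mimic the standard Ilmanen-Tonegawa derivation of the Brakke-type monotonicity formula for Allen-Cahn with forcing (see \cite[Proposition 2.7]{T17}), with a minor adaptation for the spatial dependence of $g^{\varepsilon}$. For the pointwise inequality \eqref{eq:monotonicity-formula}, I would differentiate $\int_{\mathbb{R}^d}\rho\,d\mu_t^{\varepsilon}=\frac{1}{\sigma}\int\rho\,e^{\varepsilon}\,dx$ with $e^{\varepsilon}:=\frac{\varepsilon|\nabla\phi^{\varepsilon}|^2}{2}+\frac{W(\phi^{\varepsilon})}{\varepsilon}$, split into the two pieces $\frac{1}{\sigma}\int(\rho_t e^{\varepsilon}+\rho\,\partial_t e^{\varepsilon})\,dx$, and use the backward heat-kernel identity $\partial_t\rho+\Delta\rho+\frac{\rho}{2(s-t)}=0$ for the $(d-1)$-dimensional kernel.

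For the $\partial_t e^{\varepsilon}$ piece, I would integrate by parts on $\int\rho\,\varepsilon\nabla\phi^{\varepsilon}\cdot\nabla\phi^{\varepsilon}_t\,dx$ and substitute the Allen-Cahn equation \eqref{eq:phi-epsilon} to replace $\varepsilon\Delta\phi^{\varepsilon}-\varepsilon^{-1}W'(\phi^{\varepsilon})$ by $\varepsilon\phi^{\varepsilon}_t-g^{\varepsilon}\sqrt{2W(\phi^{\varepsilon})}$; for the $\rho_t$ piece, I would write $\rho_t=-\Delta\rho-\frac{\rho}{2(s-t)}$ and transfer the Laplacian onto $e^{\varepsilon}$ via integration by parts. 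After the standard Modica-Mortola-type rearrangement, the terms organize into (i) the discrepancy contribution $\frac{1}{2(s-t)}\int\rho\,d\xi^{\varepsilon}_t$, (ii) a sign-definite completion-of-the-square remainder (as in \cite[Section 3]{I93}) that is nonpositive and which I would discard, and (iii) the combination $-\int\rho\,\varepsilon(\phi^{\varepsilon}_t)^2\,dx+\int\rho\,g^{\varepsilon}\sqrt{2W(\phi^{\varepsilon})}\,\phi^{\varepsilon}_t\,dx$. For (iii), Young's inequality in the form
\[
g^{\varepsilon}\sqrt{2W(\phi^{\varepsilon})}\,\phi^{\varepsilon}_t\leq\varepsilon(\phi^{\varepsilon}_t)^2+\frac{(g^{\varepsilon})^2 W(\phi^{\varepsilon})}{2\varepsilon}
\]
lets the dissipation absorb the first summand, while the second is bounded by $\tfrac{1}{2}(g^{\varepsilon})^2 e^{\varepsilon}$ because $W(\phi^{\varepsilon})/\varepsilon\leq e^{\varepsilon}$; dividing by $\sigma$ produces exactly $\tfrac{1}{2}\int(g^{\varepsilon})^2\rho\,d\mu_t^{\varepsilon}$ on the right-hand side of \eqref{eq:monotonicity-formula}.

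For the integrated bound \eqref{eq:monotonicity-formula-integrating-factor}, I would use the pointwise estimate $|g^{\varepsilon}(x,t)|^2\leq\max\{|\lambda^{\varepsilon}(t)|^2,(d/R_0)^2\}\leq C\max\{|\lambda^{\varepsilon}(t)|^2,1\}$, which is immediate from the definition \eqref{eq:forcing} together with $0\leq\eta_{\varepsilon},1-\eta_{\varepsilon}\leq1$. Plugging this into \eqref{eq:monotonicity-formula} and applying a Gr\"onwall/Duhamel argument to $t\mapsto\int\rho_{(y,s)}\,d\mu_t^{\varepsilon}$ on $[t_1,t_2]$ yields the first, precise estimate in \eqref{eq:monotonicity-formula-integrating-factor} (with the dimensional constant $C$ absorbed into $\max\{\cdot,1\}$ and, ultimately, into the larger $C_T$). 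The second, cleaner inequality then follows from Proposition \ref{prop:L2-estimate}, which guarantees $\int_0^T\max\{|\lambda^{\varepsilon}|^2,1\}\,dt\leq C_T$ uniformly in $\varepsilon\in(0,\varepsilon_1)$, so the integrating factor is bounded by a constant depending only on $T$. The main obstacle compared to \cite{T17,T23} is precisely the spatial dependence of $g^{\varepsilon}$ introduced by the obstacles, but since the pointwise derivation only uses $g^{\varepsilon}$ through Young's inequality and the uniform bound $|g^{\varepsilon}|\leq C\max\{|\lambda^{\varepsilon}|,1\}$, the classical structure is preserved and the only structural change is the mild replacement $|\lambda^{\varepsilon}|^2\mapsto\max\{|\lambda^{\varepsilon}|^2,1\}$ inside the integrating factor.
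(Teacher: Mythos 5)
Your proposal is correct and follows essentially the same route as the paper, which cites the Huisken--Ilmanen--Tonegawa monotonicity calculation of \cite[Proposition 2.7]{T17} for the pointwise inequality \eqref{eq:monotonicity-formula} and a routine Gr\"onwall integration for \eqref{eq:monotonicity-formula-integrating-factor}. You correctly observe that the spatial dependence of $g^{\varepsilon}$ is harmless because it enters only pointwise through Young's inequality, and that the uniform bound $|g^{\varepsilon}(x,t)|\leq\max\{|\lambda^{\varepsilon}(t)|,d/R_0\}$ combined with Proposition \ref{prop:L2-estimate} controls the integrating factor up to the constant $C_T$.
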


\medskip

Abusing notations, we also write
$$
\xi^\varepsilon(x,t):=\frac{\varepsilon|\nabla \phi^\varepsilon(x,t)|^2}{2}-\frac{W(\phi^\varepsilon(x,t))}{\varepsilon},\quad (x,t)\in \mathbb{R}^d\times [0,\infty).
$$
We now give an upper bound of $\xi^{\varepsilon}$. In order to achieve this, we introduce the rescaling by
$$
\hat{\phi}^\varepsilon(\hat{x}, \hat{t}) := \phi^\varepsilon(\varepsilon \hat{x}, \varepsilon^2 \hat{t})\qquad\text{and}\qquad
\hat{g}^\varepsilon(\hat{x},\hat{t}) := g^\varepsilon(\varepsilon \hat{x},\varepsilon^2 \hat{t})\qquad\text{for }(\hat{x},\hat{t})\in \mathbb{R}^d\times [0,\infty).
$$
Then, we have, from \eqref{eq:L-infty-g-ep},
\begin{align}\label{eq:L-infty-g-ep-rescaled}
\|\hat{g}^\varepsilon \|_{L^{\infty}(\mathbb{R}^d\times [0,\infty))} \leq C\varepsilon^{-\alpha/2}, \qquad \qquad \| \nabla_{\hat{x}} \hat{g}^\varepsilon \|_{L^{\infty}(\mathbb{R}^d\times [0,\infty))} \leq C\varepsilon^{(1-\alpha)/2},
\end{align}
and
\begin{align}\label{eq:phi-epsilon-rescaled}
\hat{\phi}_{\hat{t}}^\varepsilon = \Delta_{\hat{x}} \hat{\phi}^\varepsilon - W'(\hat{\phi}^\varepsilon) + \varepsilon \hat{g}^\varepsilon \sqrt{2W(\hat{\phi}^\varepsilon)} \qquad \text{in}\,\, \mathbb{R}^d \times (0, \infty).
\end{align}

Before we prove Proposition \ref{prop:upper-bound-discrepancy}, we state the following lemma, which is obtained by a standard gradient estimate argument using \eqref{eq:derivative-r}, \eqref{eq:initial-data}, \eqref{eq:L-infty-g-ep-rescaled}.

\begin{lemma}\label{lem:gradient-estimate}
There exists a constant $C>0$ such that
\begin{align*}
\sup_{\varepsilon\in(0,1)}\|\nabla\hat{\phi}^{\varepsilon}\|_{L^{\infty}(\mathbb{R}^d\times[0,\infty))}\leq C.
\end{align*}
\end{lemma}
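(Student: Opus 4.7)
The plan is to view \eqref{eq:phi-epsilon-rescaled} as a heat equation with forcing $F^\varepsilon:=-W'(\hat{\phi}^\varepsilon)+\varepsilon\hat{g}^\varepsilon\sqrt{2W(\hat{\phi}^\varepsilon)}$ and to combine three ingredients: a uniform $L^\infty$ bound on $F^\varepsilon$, a uniform $L^\infty$ bound on $\nabla_{\hat{x}}\hat{\phi}^\varepsilon(\cdot,0)$, and a two-scale Duhamel representation of $\hat{\phi}^\varepsilon$ on $\mathbb{R}^d$ (the periodic extension being legitimate as in Section \ref{sec:barrier-functions}) that propagates the estimate uniformly in $\hat{t}\in[0,\infty)$.

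\textbf{The two uniform bounds.} First I would invoke Proposition \ref{prop:strong-max-principle}, which gives $|\hat{\phi}^\varepsilon|\leq 1$, so that $|W'(\hat{\phi}^\varepsilon)|\leq 2$ and $\sqrt{2W(\hat{\phi}^\varepsilon)}=1-(\hat{\phi}^\varepsilon)^2\leq 1$; combined with the first part of \eqref{eq:L-infty-g-ep-rescaled}, namely $\|\varepsilon\hat{g}^\varepsilon\|_{L^\infty}\leq C\varepsilon^{1-\alpha/2}\leq C$, this would yield $\|F^\varepsilon\|_{L^\infty(\mathbb{R}^d\times[0,\infty))}\leq C$ uniformly in $\varepsilon\in(0,1)$. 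For the initial gradient, I would differentiate \eqref{eq:initial-data} under the rescaling to obtain
\[
\nabla_{\hat{x}}\hat{\phi}^\varepsilon(\hat{x},0)=\varepsilon\,(\nabla\phi_0^\varepsilon)(\varepsilon\hat{x})=\mathrm{sech}^2\!\left(\frac{\widetilde{r}_0^\varepsilon(\varepsilon\hat{x})}{\varepsilon}\right)\nabla\widetilde{r}_0^\varepsilon(\varepsilon\hat{x}),
\]
and then apply \eqref{eq:derivative-r} with $j=1$, which gives $\|\nabla\widetilde{r}_0^\varepsilon\|_{L^\infty(\Omega)}\leq 1$, to conclude $\|\nabla_{\hat{x}}\hat{\phi}^\varepsilon(\cdot,0)\|_{L^\infty(\mathbb{R}^d)}\leq 1$ independently of $\varepsilon$.

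\textbf{Duhamel propagation uniform in $\hat{t}$.} Next I would exploit the heat semigroup $(e^{\tau\Delta})_{\tau\geq 0}$ on $\mathbb{R}^d$, together with the standard bounds $\|\nabla e^{\tau\Delta}f\|_{L^\infty}\leq\|\nabla f\|_{L^\infty}$ and $\|\nabla e^{\tau\Delta}f\|_{L^\infty}\leq C\tau^{-1/2}\|f\|_{L^\infty}$. For $\hat{t}\in[0,1]$, Duhamel starting from $\hat{t}=0$ would bound $\|\nabla_{\hat{x}}\hat{\phi}^\varepsilon(\hat{t})\|_{L^\infty}$ by $\|\nabla_{\hat{x}}\hat{\phi}^\varepsilon(\cdot,0)\|_{L^\infty}+\int_0^{\hat{t}}C(\hat{t}-s)^{-1/2}\|F^\varepsilon\|_{L^\infty}\,ds\leq C$. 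For $\hat{t}\geq 1$, I would restart the Duhamel representation from $\hat{t}-1$ and apply the singular estimate to the linear part together with $|\hat{\phi}^\varepsilon|\leq 1$, yielding
\[
\|\nabla_{\hat{x}}\hat{\phi}^\varepsilon(\hat{t})\|_{L^\infty}\leq C\|\hat{\phi}^\varepsilon(\hat{t}-1)\|_{L^\infty}+\int_0^1 C(1-s)^{-1/2}\|F^\varepsilon\|_{L^\infty}\,ds\leq C,
\]
with $C$ independent of $\hat{t}$ and $\varepsilon$. Merging the two regimes would give the asserted bound.

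\textbf{Main obstacle.} The only nonroutine issue is uniformity for all $\hat{t}\in[0,\infty)$: a single Duhamel representation from $\hat{t}=0$ would produce a $\sqrt{\hat{t}}$-growing contribution from the bounded forcing. The remedy, restarting Duhamel one time unit before $\hat{t}$, trades the initial-gradient contribution for $\|\hat{\phi}^\varepsilon(\hat{t}-1)\|_{L^\infty}$, which is uniformly bounded by Proposition \ref{prop:strong-max-principle}; the fact that no better decay on $\varepsilon\hat g^\varepsilon$ than $L^\infty$ is used is what makes \eqref{eq:L-infty-g-ep-rescaled}, rather than any smallness, precisely the hypothesis required.
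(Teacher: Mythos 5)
Your proof is correct and fills in, cleanly, the ``standard gradient estimate argument'' that the paper invokes without spelling out; in particular, you use exactly the three ingredients the paper flags, namely \eqref{eq:derivative-r} and \eqref{eq:initial-data} for the uniform bound on $\nabla_{\hat x}\hat\phi^\varepsilon(\cdot,0)$, and \eqref{eq:L-infty-g-ep-rescaled} to bound the forcing $\varepsilon\hat g^\varepsilon\sqrt{2W(\hat\phi^\varepsilon)}$. The two-scale Duhamel argument (restarting at $\hat t-1$ so that the singular kernel integral stays bounded and the initial-gradient term is traded for the uniform $L^\infty$ bound $|\hat\phi^\varepsilon|\le 1$ from Proposition \ref{prop:strong-max-principle}) is a standard and complete way to obtain uniformity over all $\hat t\in[0,\infty)$; the only cosmetic slip is writing $\int_0^1 C(1-s)^{-1/2}\,ds$ where the change of variables gives $\int_0^1 Cu^{-1/2}\,du$, but both are the same finite number, so nothing is affected.
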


\begin{proposition}\label{prop:upper-bound-discrepancy}
There exists a constant $C>0$ such that for any $\varepsilon\in\left(0,1\right)$, it holds that
\begin{align*}
\sup_{\mathbb{R}^d\times[0,\infty)}\xi^{\varepsilon}\leq C\varepsilon^{-\frac{1}{2}\left(1+\frac{\alpha}{2}\right)}.
\end{align*}
\end{proposition}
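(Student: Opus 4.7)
The plan is to pass to the rescaled variables $\hat x := x/\varepsilon$, $\hat t := t/\varepsilon^2$ and apply a parabolic maximum-principle argument in the spirit of Ilmanen \cite{I93}, treating the forcing as a perturbation. Setting $\hat\xi^\varepsilon := \tfrac{1}{2}|\nabla_{\hat x}\hat\phi^\varepsilon|^2 - W(\hat\phi^\varepsilon)$, one has $\xi^\varepsilon(x,t) = \varepsilon^{-1}\hat\xi^\varepsilon(\hat x,\hat t)$, so the claim is equivalent to $\sup\hat\xi^\varepsilon \leq C\varepsilon^{1/2-\alpha/4}$.

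First, a direct computation using \eqref{eq:phi-epsilon-rescaled} together with the algebraic identity $W'(r)\sqrt{2W(r)} = -4rW(r)$, valid for $W(r)=(1-r^2)^2/2$ and $|r|<1$ (guaranteed by Proposition \ref{prop:strong-max-principle}), gives the parabolic identity
\begin{equation*}
\partial_{\hat t}\hat\xi^\varepsilon - \Delta_{\hat x}\hat\xi^\varepsilon = (W'(\hat\phi^\varepsilon))^2 - |\nabla_{\hat x}^2\hat\phi^\varepsilon|^2 + \varepsilon\sqrt{2W(\hat\phi^\varepsilon)}\,\nabla_{\hat x}\hat\phi^\varepsilon\cdot\nabla_{\hat x}\hat g^\varepsilon - 4\varepsilon\hat g^\varepsilon\hat\phi^\varepsilon\hat\xi^\varepsilon.
\end{equation*}
By Lemma \ref{lem:gradient-estimate} and \eqref{eq:L-infty-g-ep-rescaled}, the two forcing-induced terms on the right-hand side are of sizes $O(\varepsilon^{(3-\alpha)/2})$ and $O(\varepsilon^{1-\alpha/2}|\hat\xi^\varepsilon|)$, respectively. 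The initial data \eqref{eq:initial-data} yield $\hat\xi^\varepsilon_0 = \tfrac{1}{2}\mathrm{sech}^4(\widetilde r_0^i/\varepsilon)(|\nabla\widetilde r_0^i|^2-1)\leq 0$, since $\|\nabla\widetilde r_0^i\|_{L^\infty}\leq 1$ from the $j=1$ case of \eqref{eq:derivative-r}.

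At a hypothetical strictly positive interior maximum of $\hat\xi^\varepsilon$, Ilmanen's algebraic trick applies: the vanishing of $\nabla_{\hat x}\hat\xi^\varepsilon$ translates to $\nabla_{\hat x}^2\hat\phi^\varepsilon\cdot\nabla_{\hat x}\hat\phi^\varepsilon = W'(\hat\phi^\varepsilon)\nabla_{\hat x}\hat\phi^\varepsilon$, and strict positivity forces $|\nabla_{\hat x}\hat\phi^\varepsilon|>0$ (otherwise $\hat\xi^\varepsilon = -W(\hat\phi^\varepsilon)\leq 0$). Projecting onto the unit vector $\nabla_{\hat x}\hat\phi^\varepsilon/|\nabla_{\hat x}\hat\phi^\varepsilon|$ yields the Frobenius-norm bound $|\nabla_{\hat x}^2\hat\phi^\varepsilon|^2 \geq (W'(\hat\phi^\varepsilon))^2$, so the leading pair cancels at the maximum. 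Using the spatial periodicity in the rescaled variable (so that suprema on time slabs are attained) and testing $\hat\xi^\varepsilon$ against the constant barrier $A\varepsilon^{1/2-\alpha/4}$, the residual differential inequality produces a contradiction for $A$ large and $\varepsilon$ small.

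I expect the main obstacle to be the precise matching of scales in the barrier argument. The linear term $-4\varepsilon\hat g^\varepsilon\hat\phi^\varepsilon\hat\xi^\varepsilon$ has a favorable sign inside the obstacles (since $\hat g^\varepsilon\hat\phi^\varepsilon>0$ there by Proposition \ref{prop:barrier-functions}) but an ambiguous sign in the complement $\Omega\setminus\overline O$. One therefore handles the two regions separately, or introduces an auxiliary time-weighted barrier of the form $A\varepsilon^{1/2-\alpha/4}(1+\gamma\hat t)$ to absorb the sign-ambiguous Gronwall growth, and finally passes to $\hat t\to\infty$ by a limiting argument on a sequence of near-maximum points. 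The explicit exponent $\tfrac{1}{2}-\tfrac{\alpha}{4}$ arises from balancing the remainder scale $\varepsilon^{(3-\alpha)/2}$ against the ambient multiplicative scale $\varepsilon^{1-\alpha/2}$ coming from $|\hat g^\varepsilon|\leq C\varepsilon^{-\alpha/2}$.
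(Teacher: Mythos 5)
Your PDE computation for the rescaled discrepancy is correct (the identity $W'\sqrt{2W}=-4rW$ for $|r|<1$ and the resulting linear term $-4\varepsilon\hat g^\varepsilon\hat\phi^\varepsilon\hat\xi^\varepsilon$ check out), and the observation that the well-prepared initial data give $\hat\xi^\varepsilon_0\leq 0$ via $\|\nabla\widetilde r_0^i\|_{L^\infty}\leq 1$ is also fine. However, the maximum-principle step as written does not close, and this is a genuine gap rather than a presentational one. At a hypothetical positive interior maximum, Ilmanen's Cauchy--Schwarz step yields only $|\nabla_{\hat x}^2\hat\phi^\varepsilon|^2\geq (W')^2$, i.e., \emph{exact} cancellation of the principal pair $(W')^2-|\nabla^2_{\hat x}\hat\phi^\varepsilon|^2\leq 0$ with no surplus. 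What remains is
\[
0\leq \partial_{\hat t}\hat\xi^\varepsilon-\Delta_{\hat x}\hat\xi^\varepsilon\leq C\varepsilon^{(3-\alpha)/2}+C\varepsilon^{1-\alpha/2}\,\hat\xi^\varepsilon(\hat x_0,\hat t_0),
\]
which holds for \emph{every} positive value of $\hat\xi^\varepsilon(\hat x_0,\hat t_0)$, so testing against a constant barrier $A\varepsilon^{1/2-\alpha/4}$ cannot produce a contradiction no matter how large $A$ is. In the unforced case, the barrier argument works because one is proving nonpositivity (a one-sided, margin-free statement); with a forcing term one must actually beat a perturbation of definite size, which requires a strictly negative restoring term. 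Your fallback --- a time-weighted barrier $A\varepsilon^{1/2-\alpha/4}(1+\gamma\hat t)$ --- trades the problem for a bound that grows linearly in $\hat t$, which is useless for the uniform-in-time claim; there is no ``limiting argument'' that recovers a $\hat t$-independent constant from it.

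The missing ingredient is the penalization device that the paper uses (following Chen and Modica): one replaces $\hat\xi^\varepsilon$ by $\frac{1}{2}|\nabla_{\hat x}\hat\phi^\varepsilon|^2-W(\hat\phi^\varepsilon)-G(\hat\phi^\varepsilon)$ with $G(u)=\varepsilon^{\gamma}\bigl(1-\frac{1}{8}(u-\frac{1}{2})^2\bigr)$, $\gamma=\frac{1}{2}(1-\frac{\alpha}{2})$. The crucial effect is a new term $G''(\hat\phi^\varepsilon)|\nabla_{\hat x}\hat\phi^\varepsilon|^2=-\frac{\varepsilon^\gamma}{4}|\nabla_{\hat x}\hat\phi^\varepsilon|^2$ in the parabolic inequality. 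When the penalized discrepancy exceeds $C_1\varepsilon^\gamma$, one automatically has $|\nabla_{\hat x}\hat\phi^\varepsilon|^2\geq 2C_1\varepsilon^\gamma$, so this term contributes at most $-\frac{C_1}{2}\varepsilon^{2\gamma}=-\frac{C_1}{2}\varepsilon^{1-\alpha/2}$, which beats the $O(\varepsilon^{1-\alpha/2})$ remainder once $C_1$ is large. That strictly negative surplus is exactly what your proposal lacks. Your final exponent matches the paper's (both give $\varepsilon^\gamma$ in rescaled variables, i.e., $\varepsilon^{-\frac{1}{2}(1+\frac{\alpha}{2})}$ after unrescaling), but the route you outline cannot reach it.
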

\begin{proof}
The proof follows the arguments developed in \cite{C96,M85} with suitable adaptations, which we include for completeness. Let
\begin{align}\label{eq:xi-rescaled}
\hat{\xi}^\varepsilon(\hat{x}, \hat{t}) := \frac{|\nabla_{\hat{x}} \hat{\phi}^\varepsilon(\hat{x}, \hat{t})|^2}{2} - W(\hat{\phi}^\varepsilon(\hat{x}, \hat{t})) - G(\hat{\phi}^\varepsilon(\hat{x}, \hat{t})),
\end{align}
where $G\in C^{\infty}(\mathbb{R})$ will be taken later. We will rely on the maximum principle. We compute that
\begin{align*}
\partial_{\hat{t}} \hat{\xi}^\varepsilon - \Delta_{\hat{x}} \hat{\xi}^\varepsilon = \nabla_{\hat{x}}& \hat{\phi}^\varepsilon \cdot \nabla_{\hat{x}} \partial_{\hat{t}} \hat{\phi}^\varepsilon - (W' + G') \partial_{\hat{t}} \hat{\phi}^\varepsilon - |\nabla_{\hat{x}}^2 \hat{\phi}^\varepsilon|^2\\& - \nabla_{\hat{x}} \hat{\phi}^\varepsilon \cdot \nabla_{\hat{x}} (\Delta_{\hat{x}} \hat{\phi}^\varepsilon)
+ (W' + G') \Delta_{\hat{x}} \hat{\phi}^\varepsilon + (W'' + G'') |\nabla_{\hat{x}} \hat{\phi}^\varepsilon|^2,
\end{align*}
where $W'=W'(\hat{\phi}^{\varepsilon}),\ G'=G'(\hat{\phi}^{\varepsilon})$. By \eqref{eq:phi-epsilon-rescaled}, we subsequently have
\begin{align}\label{eq:parabolic-derivative}
\partial_{\hat{t}} \hat{\xi}^\varepsilon - \Delta_{\hat{x}} \hat{\xi}^\varepsilon = W'&(W'+G') - |\nabla_{\hat{x}}^2 \hat{\phi}^\varepsilon|^2 + \text{$G''$} |\nabla_{\hat{x}} \hat{\phi}^\varepsilon|^2\\& + \varepsilon \nabla_{\hat{x}} \hat{\phi}^\varepsilon \cdot \nabla_{\hat{x}} \left(\hat{g}^\varepsilon \sqrt{2W(\hat{\phi}^{\varepsilon})}\right) - \varepsilon (W' + G') \hat{g}^\varepsilon \sqrt{2W(\hat{\phi}^{\varepsilon})}.
\end{align}
We differentiate \eqref{eq:xi-rescaled} with respect to $\hat{x}_j$ and use Cauchy-Schwarz's inequality to obtain
\begin{align*}
|\nabla_{\hat{x}} \hat{\phi}^\varepsilon|^2 |\nabla_{\hat{x}}^2 \hat{\phi}^\varepsilon|^2 & \geq \sum_{j=1}^n \left( \sum_{i=1}^n \partial_{\hat{x}_i} \hat{\phi}^\varepsilon \partial_{\hat{x}_i \hat{x}_j} \hat{\phi}^\varepsilon \right)^2 = \sum_{j=1}^n \left( \partial_{\hat{x}_j} \hat{\xi}^\varepsilon + (W' + G') \partial_{\hat{x}_j} \hat{\phi}^\varepsilon \right)^2\\
& \geq 2((W' + G') \nabla_{\hat{x}} \hat{\phi}^\varepsilon) \cdot \nabla_{\hat{x}} \hat{\xi}^\varepsilon\notag+ (W' + G')^2 |\nabla_{\hat{x}} \hat{\phi}^\varepsilon|^2 .%
\end{align*}
We divide by $|\nabla_{\hat{x}}\hat{\phi}^{\varepsilon}|^2$ on $\{|\nabla_{\hat{x}}\hat{\phi}^{\varepsilon}|\neq0\}$ and put into \eqref{eq:parabolic-derivative} to obtain
\begin{align}\label{eq:parabolic-derivative-cauchy-schwarz}
\partial_{\hat{t}} \hat{\xi}^\varepsilon - \Delta_{\hat{x}} \hat{\xi}^\varepsilon \leq& -(G')^2 - W'G' - \frac{2(W' + G') (\nabla_{\hat{x}} \hat{\phi}^\varepsilon \cdot \nabla_{\hat{x}} \hat{\xi}^\varepsilon)}{|\nabla_{\hat{x}} \hat{\phi}^\varepsilon|^2} + G'' |\nabla_{\hat{x}} \hat{\phi}^\varepsilon|^2\notag \\
&+ \varepsilon \nabla_{\hat{x}} \hat{\phi}^\varepsilon \cdot \nabla_{\hat{x}} \left(\hat{g}^\varepsilon \sqrt{2W(\hat{\phi}^{\varepsilon})}\right) - \varepsilon (W' + G') \hat{g}^\varepsilon \sqrt{2W(\hat{\phi}^{\varepsilon})}.
\end{align}
Let $G(s) := \varepsilon^{\gamma} \left(1 - \frac{1}{8}s^2\right)$ for $s\in\mathbb{R}$ where $\gamma:=\frac{1}{2}\left(1-\frac{\alpha}{2}\right)\in\left(0,\frac{1}{2}\right)$, so that
\begin{align}\label{eq:property-G}
0<G\leq \varepsilon^{\gamma}, \quad W'G' \geq 0, \quad G'' = -\frac{\varepsilon^{\gamma}}{4}\qquad\text{for }s\in(-1,1).
\end{align}
Then, \eqref{eq:property-G} yields, with $W=W(\hat{\phi}^{\varepsilon})$,
\begin{align}\label{eq:M1}
\partial_t \hat{\xi}^{\varepsilon} - \Delta_{\hat{x}} \hat{\xi}^{\varepsilon} \leq& -\frac{2((W' + G')(\nabla_{\hat{x}} \hat{\phi}^{\varepsilon} \cdot \nabla_{\hat{x}} \hat{\xi}^{\varepsilon})}{|\nabla_{\hat{x}} \hat{\phi}^{\varepsilon}|^2} - \frac{\varepsilon^{\gamma}}{4} |\nabla_{\hat{x}} \hat{\phi}^{\varepsilon}|^2 \\
&+ \varepsilon |\nabla_{\hat{x}} \hat{\phi}^{\varepsilon}| |\nabla_{\hat{x}} (\hat{g}^{\varepsilon} \sqrt{2W})| + \varepsilon \sqrt{2W} (|W'| + |G'|) |\hat{g}^{\varepsilon}|.
\end{align}
The last two terms are bounded by, from Lemma \ref{lem:gradient-estimate}, \eqref{eq:L-infty-g-ep-rescaled}, and Proposition \ref{prop:strong-max-principle},
\begin{align*}
\varepsilon |\nabla_{\hat{x}} \hat{\phi}^{\varepsilon}| |\nabla_{\hat{x}} (\hat{g}^{\varepsilon} \sqrt{2W})| &+\varepsilon \sqrt{2W} (|W'| + |G'|) |\hat{g}^{\varepsilon}|\\
&\leq C\varepsilon  \left( \sqrt{2W} |\nabla_{\hat{x}} \hat{g}^{\varepsilon}| + |\hat{g}^{\varepsilon}| \frac{|W'|}{\sqrt{2W}} |\nabla_{\hat{x}} \hat{\phi}^{\varepsilon}| \right)+C\varepsilon^{1-\alpha/2} \leq C\varepsilon^{1-\alpha/2}.
\end{align*}
Therefore, from \eqref{eq:M1}, we deduce
\begin{align}\label{eq:max-principle-to-be-applied}
\partial_t \hat{\xi}^{\varepsilon} - \Delta_{\hat{x}} \hat{\xi}^{\varepsilon} \leq& -\frac{2((W' + G')(\nabla_{\hat{x}} \hat{\phi}^{\varepsilon} \cdot \nabla_{\hat{x}} \hat{\xi}^{\varepsilon})}{|\nabla_{\hat{x}} \hat{\phi}^{\varepsilon}|^2} - \frac{\varepsilon^{\gamma}}{4} |\nabla_{\hat{x}} \hat{\phi}^{\varepsilon}|^2 + C\varepsilon^{1-\alpha/2}.
\end{align}
Fix $\hat{T}>0$ and assume $\sup_{\mathbb{R}^d\times[0,\hat{T}]}\hat{\xi}^{\varepsilon}\geq C_1\varepsilon^{\gamma}$ with $C_1>0$ to be chosen. Since the function $G$ serves as a penalization term, there exists a maximizer $(\hat{x}_0, \hat{t}_0) \in \mathbb{R}^d \times (0,\hat{T}]$ of $\xi^{\varepsilon}$ by the fact that $\xi^{\varepsilon}(\cdot,0)\leq0$ (from \eqref{eq:xi-0-nonpositive} with modifications), Proposition \ref{prop:strong-max-principle}, and Lemma \ref{lem:gradient-estimate}. Then, $|\nabla_{\hat{x}} \hat{\phi}^{\varepsilon}(\hat{x}_0, \hat{t}_0)|^2 \geq 2C_1\varepsilon^{\gamma}$ so that \eqref{eq:max-principle-to-be-applied} is available near $(\hat{x}_0, \hat{t}_0)$.

We therefore are able to apply the maximum principle to \eqref{eq:max-principle-to-be-applied} at $(\hat{x}_0, \hat{t}_0)$ to obtain
\begin{align*}
0 \leq - \frac{C_1\varepsilon^{1-\alpha/2}}{2} + C\varepsilon^{1-\alpha/2}.
\end{align*}
We derive a contradiction for $C_1>2C$. As $\hat{T}>0$ was arbitrary, we derive
\begin{align*}
\sup_{\mathbb{R}^d\times[0,\infty)}\hat{\xi}^{\varepsilon}\leq C_1\varepsilon^{\frac{1}{2}\left(1-\frac{\alpha}{2}\right)}.
\end{align*}
From the fact that $G\leq\varepsilon^{\gamma}$ and the rescaling, we obtain the conclusion.
\end{proof}

\subsection{An upper bound of the density ratio}\label{subsec:upper-bound-density-ratio}
We define the density ratio by
\begin{align*}
D^{\varepsilon}(t):=\sup_{x\in\R^d,\ r\in(0,4)} \frac{\mu_t^{\varepsilon}(B_r(x))}{\omega_{d-1}r^{d-1}}
\end{align*}
for $t\geq0$, $\varepsilon\in(0,1)$, where $\omega_{d-1}$ is the volume of the $(d-1)$-dimensional unit ball. Here and so forth, we abuse notations by $\mu_t^{\varepsilon}(A):=\mu_t^{\varepsilon}(\chi_A)$ in the definition \eqref{eq:def-mu}.

For our choice of initial data \eqref{eq:initial-data}, the density ratio is bounded by an $\varepsilon$-independent number $D_0>0$ (see \cite[p. 423]{I93}) so that
\begin{align}\label{eq:initial-density-ratio}
    \sup_{\varepsilon\in(0,1)}D^{\varepsilon}(0)\leq D_0.
\end{align}
We will suppress the $D_0$-dependence henceforth as this is determined by the problem.

\medskip

From now until Lemma \ref{lem:lemma4.7-TT16}, we suppose that
\begin{align}\label{eq:temporary-density-ratio}
\sup_{t\in[0,T_1]}D^{\varepsilon}(t)\leq D_1
\end{align}
for some numbers $T_1,D_1>0$. Here, $D_1$ is taken to be larger than $D_0$ in \eqref{eq:initial-density-ratio} and does not depend on $\varepsilon$. We mention that assuming \eqref{eq:temporary-density-ratio} does not cause a circularity with the conclusion of Proposition \ref{prop:upper-bound-density-ratio}, which will be explained in detail in the proof of the proposition. For this sake, dependency of numbers on other quantities will be carefully tracked by labeling with numbers in the rest of this subsection. The quantities given by the setting (such as $M_0$, the double-well function $W$, the obstacle $O$, the spatial dimension $d$, the initial datum $U_0$ and its approximation $\{U_0^i\}_{i=1}^{\infty}$, the numbers $\alpha,\beta,\omega\in(0,1)$, and so on) that are already fixed from the beginning will be referred as the data or the problem when we proceed dependency tracking in proofs.


In the following, we set
\begin{align}\label{eq:lambda-remind}
\lambda:=\frac{1}{2}\left(1+\frac{\alpha}{2}\right)\in\left(\frac{1}{2},1\right),\qquad\lambda':=\frac{1}{2}\left(1+\lambda\right)\in\left(\lambda,1\right).
\end{align}

\begin{lemma}\label{lem:lemma4.5-TT16}
Assume \eqref{eq:temporary-density-ratio} and let $\varepsilon_1\in(0,1)$ be as in Proposition \ref{prop:L2-estimate}. Then, there exist $c_1=c_1(D_1,T_1)>1$, $c_2=c_2(D_1,T_1)>0$, and $\varepsilon_3=\varepsilon_3(D_1,T_1)\in(0,\varepsilon_1)$ satisfying the following property: for any $\varepsilon\in(0,\varepsilon_3)$, $(y,s)\in\Omega\times(0,T_1]$ with $|\phi^{\varepsilon}(y,s)|<\sqrt{\frac23}$, and $t\in[\max\{0,s-2\varepsilon^{2\lambda'}\},s]$,
\begin{align*}
c_2R^{d-1}\leq\mu_t^{\varepsilon}(B_R(y)),
\end{align*}
where $R=c_1(s+\varepsilon^2-t)^{1/2}$.
\end{lemma}
\begin{proof}
First of all, there exists a constant $\gamma_0\in(0,1)$ depending only on the data such that
\begin{align*}
|\phi^{\varepsilon}(x, s)| \leq \gamma_0 \sup_{x \in \Omega} \varepsilon |\nabla \phi^{\varepsilon}(y, s)| + |\phi^{\varepsilon}(y, s)| \leq C \gamma_0 + \sqrt{\frac23} \leq \frac12\left(1+\sqrt{\frac23}\right) < 1
\end{align*}
for any $x\in B_{\gamma_0\varepsilon}(y)$,where $C>0$ is the constant in Lemma \ref{lem:gradient-estimate}. Hence, we have
\begin{align*}
W(\phi^{\varepsilon}(x,s))\geq \inf_{|s|\leq\frac12\left(1+\sqrt{\frac23}\right)}W(s)(=:c)>0\qquad\text{for }x\in B_{\gamma_0\varepsilon}(y)
\end{align*}
and
\begin{align}\label{eq:lemma4.5-TT16-1}
\int_{B_{\gamma_0 \varepsilon}(y)} \rho_{(y,s+\varepsilon^2)}(x, s) \, d\mu^{\varepsilon}_s(x) \geq \frac{c}{(4\pi)^{\frac{d-1}{2}} \varepsilon^d} \int_{B_{\gamma_0 \varepsilon}(y) } e^{-\frac{|x-y|^2}{4\varepsilon^2}} \, dx \geq M,
\end{align}
where $M>0$ is a constant depending only on the data.

For $\tau\in(0,s+\varepsilon^2)$, we record the fact that
\begin{align}\label{eq:lemma4.5-TT16-2}
\int_{\R^d}\rho_{(y,s+\varepsilon^2)}(x,\tau)\,dx
&=(4\pi(s+\varepsilon^2-\tau))^{1/2},
\end{align}
which follows from the standard properties of the heat kernel.

\medskip

Let the constants $C_T$ and $C$ be as in \eqref{eq:monotonicity-formula-integrating-factor} and Proposition \ref{prop:upper-bound-discrepancy}, respectively. We apply Proposition \ref{prop:upper-bound-discrepancy} and \eqref{eq:monotonicity-formula-integrating-factor} with $t\leq s<s+\varepsilon^2$ (and with $s\leq T_1$) in place of $t_1\leq t_2<s$ as well as \eqref{eq:lemma4.5-TT16-1}, \eqref{eq:lemma4.5-TT16-2} to obtain
\begin{align}\label{eq:lemma4.5-TT16-3}
M&\leq C_{T_1}\left(\int_{\R^d}\rho_{(y,s+\varepsilon^2)}(x,t)\,d\mu^{\varepsilon}_t(x)+\int_t^sC\varepsilon^{-\lambda}\frac{(4\pi(s+\varepsilon^2-\tau))^{1/2}}{2(s+\varepsilon^2-\tau)}\,d\tau\right)\notag\\
&\leq C_{T_1}\left(\int_{\R^d}\rho_{(y,s+\varepsilon^2)}(x,t)\,d\mu^{\varepsilon}_t(x)+2C\sqrt{\pi}\varepsilon^{\lambda'-\lambda}\right).
\end{align}
Here, we used the fact that $\int_t^s(s+\varepsilon^2-\tau)^{-1/2}\,d\tau\leq2\varepsilon^{\lambda'}$ from the condition $t\in[\max\{0,s-2\varepsilon^{2\lambda'}\},s]$ and we restrict $\varepsilon\in(0,\varepsilon_1)$ small enough for Propositions \ref{prop:L2-estimate}, \ref{prop:monotonicity-formula} to be valid.

\medskip

Let $R:=c_1(s+\varepsilon^2-t)^{1/2}$ so that $s+\varepsilon^2-t=\frac{R^2}{c_1^2}$. Here, $c_1>1$ is to be chosen. We first note that as $\mu_t^{\varepsilon}$ is extended periodically on $\R^d$, we have
\begin{align}\label{eq:from-periodicity}
\mu_t^{\varepsilon}(B_r(y))\leq C(2\lceil r\rceil)^d\leq C(2r+2)^d\leq4^dCr^d\qquad\text{for }r\geq1.
\end{align}
Here, $C>0$ is the constant as in \eqref{eq:estimate-from-energy-dissipation}. Then, using \eqref{eq:temporary-density-ratio}, \eqref{eq:from-periodicity},
\begin{align}\label{eq:lemma4.5-TT16-4}
\int_{\R^d}\rho_{(y,s+\varepsilon^2)}(x,t)\,d\mu^{\varepsilon}_t(x)&=\frac{1}{\left(4\pi(s+\varepsilon^2-\tau)\right)^{\frac{d-1}{2}}}\int_{\R^d}e^{-\frac{|x-y|^2}{4(s+\varepsilon^2-\tau)}}\,d\mu_t^{\varepsilon}(x)\notag\\
&=\frac{c_1^{d-1}}{\left(2\sqrt{\pi}R\right)^{d-1}}\int_{\R^d}e^{-\frac{c_1^2|x-y|^2}{4R^2}}\,d\mu_t^{\varepsilon}(x)\notag\\
&=\frac{c_1^{d-1}}{\left(2\sqrt{\pi}R\right)^{d-1}}\int_0^1\mu_t^{\varepsilon}\left(\left\{x\in\R^d\,:\,e^{-\frac{c_1^2|x-y|^2}{4R^2}}\geq l\right\}\right)\,dl\notag\\
&=\frac{c_1^{d-1}}{\left(2\sqrt{\pi}R\right)^{d-1}}\int_0^1\mu_t^{\varepsilon}\left(B_{\frac{2R}{c_1}(-\log(l))^{1/2}}(y)\right)\,dl\notag\\
&\leq\frac{c_1^{d-1}}{\left(2\sqrt{\pi}R\right)^{d-1}}\left(\int_0^{e^{-\frac{4c_1^2}{R^2}}}\mu_t^{\varepsilon}\left(B_{\frac{2R}{c_1}(-\log(l))^{1/2}}(y)\right)\,dl\right.\notag\\
&\qquad+\left.\int_{e^{-\frac{4c_1^2}{R^2}}}^{e^{-\frac{c_1^2}{4}}}\mu_t^{\varepsilon}\left(B_{\frac{2R}{c_1}(-\log(l))^{1/2}}(y)\right)\,dl+\int_{e^{-\frac{c_1^2}{4}}}^1\mu_t^{\varepsilon}(B_R(y))\,dl\right).
\end{align}

We estimate the first term of the right-hand side of \eqref{eq:lemma4.5-TT16-4} by using \ref{eq:from-periodicity}. Let $c(d)>0$ be a constant that depends only on $d$ such that $a^{\frac{d}{2}}e^{-a}\leq c(d)e^{-\frac{a}{2}}$ for any $a>0$. Also, from the assumption of the lemma, we have $s+\varepsilon^2-t\leq\varepsilon^2+2\varepsilon^{2\lambda'}\leq3\varepsilon^{2\lambda'}$, and therefore,
\begin{align}\label{eq:lemma4.5-TT16-4-1}
\frac{c_1^{d-1}}{\left(2\sqrt{\pi}R\right)^{d-1}}\int_0^{e^{-\frac{4c_1^2}{R^2}}}\mu_t^{\varepsilon}\left(B_{\frac{2R}{c_1}(-\log(l))^{1/2}}(y)\right)\,dl&\leq\frac{c_1^{d-1}}{\left(2\sqrt{\pi}R\right)^{d-1}}\int_0^{e^{-\frac{4c_1^2}{R^2}}}4^dC\left(\frac{2R}{c_1}\right)^d\left(-\log(l)\right)^{\frac{d}{2}}\,dl\notag\\
&\leq \frac{2^{2d+1}}{\pi^{\frac{d-1}{2}}}C\left(\frac{R}{c_1}\right)\int_{\frac{4c_1^2}{R^2}}^{\infty}a^{\frac{d}{2}}e^{-a}da\notag\\
&\leq \frac{2^{2d+1}}{\pi^{\frac{d-1}{2}}}C\left(\frac{R}{c_1}\right)c(d)\int_{\frac{4c_1^2}{R^2}}^{\infty}e^{-\frac{a}{2}}da\notag\\
&\leq\frac{3\cdot2^{2d+2}c(d)C}{\pi^{\frac{d-1}{2}}}\varepsilon^{\lambda'}.
\end{align}
The second and third terms of the right-hand side of \eqref{eq:lemma4.5-TT16-4} are estimated by the assumption \eqref{eq:temporary-density-ratio} as
\begin{align}\label{eq:lemma4.5-TT16-4-23}
&\frac{c_1^{d-1}}{\left(2\sqrt{\pi}R\right)^{d-1}}\left(\int_{e^{-\frac{4c_1^2}{R^2}}}^{e^{-\frac{c_1^2}{4}}}\mu_t^{\varepsilon}\left(B_{\frac{2R}{c_1}(-\log(l))^{1/2}}(y)\right)\,dl+\int_{e^{-\frac{c_1^2}{4}}}^1\mu_t^{\varepsilon}(B_R(y))\,dl\right)\notag\\
&\leq\frac{c_1^{d-1}}{\left(2\sqrt{\pi}R\right)^{d-1}}\left(D_1\omega_{d-1}\int_{e^{-\frac{4c_1^2}{R^2}}}^{e^{-\frac{c_1^2}{4}}}\left(\frac{2R}{c_1}(-\log(l))^{1/2}\right)^{d-1}\,dl+\mu_t^{\varepsilon}(B_R(y))\right)\notag\\
&\leq\frac{c_1^{d-1}}{\left(2\sqrt{\pi}R\right)^{d-1}}\mu_t^{\varepsilon}(B_R(y))+\frac{\omega_{d-1}}{\pi^{\frac{d-1}{2}}}D_1\int_0^{e^{-\frac{c_1^2}{4}}}\left(-\log(l)\right)^{\frac{d-1}{2}}\,dl\notag\\
&\leq\frac{c_1^{d-1}}{\left(2\sqrt{\pi}R\right)^{d-1}}\mu_t^{\varepsilon}(B_R(y))+2^{\frac{d+1}{2}}e^{-\frac{c_1^2}{8}}D_1.
\end{align}
Here, we used the fact that $\int_0^{e^{-\frac{c_1^2}{4}}}\left(-\log(l)\right)^{\frac{d-1}{2}}\,dl\leq\frac{2^{\frac{d+1}{2}}\pi^{\frac{d-1}{2}}}{\omega_{d-1}}e^{-\frac{c_1^2}{8}}$ in the last line.

\medskip

Combining \eqref{eq:lemma4.5-TT16-1}-\eqref{eq:lemma4.5-TT16-4-23}, we obtain
\begin{align*}
M\leq C_{T_1}\left(\frac{c_1^{d-1}}{\left(2\sqrt{\pi}R\right)^{\frac{d-1}{2}}}\mu_t^{\varepsilon}(B_R(y))+\frac{3\cdot2^{2d+2}c(d)C}{\pi^{\frac{d-1}{2}}}\varepsilon^{\lambda'}+2^{\frac{d+1}{2}}e^{-\frac{c_1^2}{8}}D_1+2C\sqrt{\pi}\varepsilon^{\lambda'-\lambda}\right).
\end{align*}
Choose $c_1>1$ large depending only on $D_1,T_1$ and the data so that
\begin{align*}
C_{T_1}2^{\frac{d+1}{2}}e^{-\frac{c_1^2}{8}}D_1\leq\frac14M.
\end{align*}
Choose $\varepsilon_3\in(0,\varepsilon_1)$ small depending only on $D_1,T_1$ and the data so that
\begin{align*}
\frac{3\cdot2^{2d+2}c(d)C}{\pi^{\frac{d-1}{2}}}\varepsilon^{\lambda'}+2C_{T_1}C\sqrt{\pi}\varepsilon^{\lambda'-\lambda}\leq\frac14M\qquad\text{for all }\varepsilon\in(0,\varepsilon_3)
\end{align*}
With the choices of $c_1$ and $\varepsilon_3$, we have
\begin{align*}
c_2:=\frac{(2\sqrt{\pi})^{d-1}M}{2C_{T_1}c_1^{d-1}}\leq R^{-(d-1)}\mu_t^{\varepsilon}(B_R(y)),
\end{align*}
which proves the lemma.
\end{proof}

We mention that the rescaling property the forcing term, which follows from \eqref{eq:L-infty-g-ep},
\begin{align}\label{eq:forcing-term-rescaling}
\|g^{\varepsilon}\|_{L^{\infty}(\Omega\times[0,\infty))}\leq C\varepsilon^{-\alpha/2}\qquad\text{and}\qquad\|\nabla g^{\varepsilon}\|_{L^{\infty}(\Omega\times[0,\infty))}\leq C\varepsilon^{-(\alpha+1)/2}.
\end{align}

\begin{lemma}\label{lem:lemma4.6-TT16}
Assume \eqref{eq:temporary-density-ratio} and let $\varepsilon_3\in(0,1)$ be as in Lemma \ref{lem:lemma4.5-TT16}. Then, there exist $c_3=c_3(D_1,T_1)>0$ and $\varepsilon_4=\varepsilon_4(D_1,T_1)\in(0,\varepsilon_3)$ satisfying the following property: for any $y\in\Omega$, $\varepsilon\in(0,\varepsilon_4)$, $r\in(\varepsilon^{\lambda'},1)$, and $t_0\in[2\varepsilon^{2\lambda'},T]\cap[0,T_1]$, it holds that
\begin{align*}
\int_{B_{r}(y)}\left(\frac{\varepsilon|\nabla \phi^{\varepsilon}|^{2}}{2}-\frac{W(\phi^{\varepsilon})}{\varepsilon}\right)_{+}(x, t_0)\, d x\leq c_3\varepsilon^{\lambda'-\lambda}r^{d-1}.
\end{align*}
Here, $(\cdot)_+$ denotes the positive part, that is, $x_+=\max\{x,0\}$ for $x\in\R$.
\end{lemma}

\begin{proof}
We assume $T_1\geq2\varepsilon^{2\lambda'}$ as the lemma is vacuously true otherwise. Fix $y\in\Omega$, $r\in\left(\varepsilon^{\lambda'},1\right)$, and $t_0\in[2\varepsilon^{2\lambda'},T]\cap[0,T_1]$. Set
\begin{align*}
\begin{cases}
\widetilde{A}:=\left\{ x \in B_{2r}(y)\, : \text{ there is } \tilde{t}\in[t_0-\varepsilon^{2\lambda'},t_0] \text{ such that } |\phi^{\varepsilon}(x, \tilde{t})| \leq \sqrt{\frac23} \right\},\\
A:=\left\{ x \in B_{2r+2c_1\varepsilon^{\lambda'}}(y)\, :\, \mathrm{dist}(x,\tilde{A}) < 2c_1\varepsilon^{\lambda'} \right\},
\end{cases}
\end{align*}
where $c_1$ is as in Lemma \ref{lem:lemma4.5-TT16}. By the Vitali covering theorem applied to the covering $\left\{\overline{B}_{2c_1\varepsilon^{\lambda'}}(x)\right\}_{x\in\widetilde{A}}$ of $A$, there exist pairwise disjoint balls $\left\{B_{2c_1\varepsilon^{\lambda'}}(x_i)\right\}_{i=1}^N$ such that
\begin{align*}
x_i \in \tilde{A} \text{ for each } i = 1, \dots, N \quad \text{and} \quad A \subset \cup_{i=1}^N \bar{B}_{10c_1\varepsilon^{\lambda'}}(x_i).
\end{align*}
By the definition of $\widetilde{A}$, for each $x_i$, there exists $\widetilde{t}_i\in[t_0-\varepsilon^{2\lambda'},t_0]$ such that $|\phi^{\varepsilon}(x_i,\widetilde{t}_i)|\leq\sqrt{\frac23}$. Let $\hat{t}:=t_0-2\varepsilon^{2\lambda'}$ so that $\hat{t}\geq0$ and
\begin{align}\label{eq:tilde-hat-close}
\varepsilon^{2\lambda'} \leq \tilde{t}_i - \hat{t} \leq 2\varepsilon^{2\lambda'}.
\end{align}
For $\varepsilon\in(0,\varepsilon_3)$, the assumption of Lemma \ref{lem:lemma4.5-TT16} is satisfied with $\widetilde{t}_i,x_i,\hat{t}$, and $R_i:=c_1(\widetilde{t}_i+\varepsilon^2-\hat{t})^{1/2}$ in place of $s,y,t$, and $R$, respectively, and thus, we have
\begin{align*}
c_2R_i^{d-1}\leq\mu_{\hat{t}}^{\varepsilon}(B_{R_i}(x_i))\qquad\text{for }i=1,\cdots,N.
\end{align*}
By \eqref{eq:tilde-hat-close}, we see that $c_1(\varepsilon^{2\lambda'} + \varepsilon^2)^{\frac{1}{2}} \leq R_i \leq c_1(2\varepsilon^{2\lambda'} + \varepsilon^2)^{\frac{1}{2}} \leq 2c_1\varepsilon^{\lambda'}
$, which implies
\begin{align*}
c_1^{d-1}c_2\varepsilon^{\lambda'(d-1)}\leq\mu_{\hat{t}}^{\varepsilon}(B_{2c_1\varepsilon^{\lambda'}}(x_i))\qquad\text{for }i=1,\cdots,N.
\end{align*}
As the balls $\left\{B_{2c_1\varepsilon^{\lambda'}}(x_i)\right\}_{i=1}^N$ are pairwise disjoint and $B_{2c_1\varepsilon^{\lambda'}}(x_i)\subset B_{2r+2c_1\varepsilon^{\lambda'}}(y)$, we have
\begin{align}\label{eq:consequence-lemma4.5}
Nc_1^{d-1}c_2\varepsilon^{\lambda'(d-1)}\leq\mu_{\hat{t}}^{\varepsilon}(B_{2r+2c_1\varepsilon^{\lambda'}}(y)).
\end{align}
Therefore, using \eqref{eq:consequence-lemma4.5} and the fact that $A \subset \cup_{i=1}^N \bar{B}_{10c_1\varepsilon^{\lambda'}}(x_i)$, we obtain
\begin{align*}
\mathcal{L}^d(A)\leq N\omega_d(10c_1\varepsilon^{\lambda'})^d&\leq 10^d\omega_dc_1c_2^{-1}\varepsilon^{\lambda'}\mu_{\hat{t}}^{\varepsilon}(B_{2r+2c_1\varepsilon^{\lambda'}}(y))\\
&\leq10^d\omega_dc_1c_2^{-1}\varepsilon^{\lambda'}\omega_{d-1}D_1(2r+2c_1\varepsilon^{\lambda'})^{d-1}\\
&\leq4^{d-1}10^dc_1^dc_2^{-1}\omega_d\omega_{d-1}D_1\varepsilon^{\lambda'}r^{d-1}.
\end{align*}
In the last line, we used the fact that $2r<2c_1r$ and $\varepsilon^{\lambda'}<r$. Consequently, by Proposition \ref{prop:upper-bound-discrepancy}, we obtain
\begin{align}\label{eq:part1-lemma4.6}
\int_{A\cap B_{r}(y)}\left(\frac{\varepsilon|\nabla \phi^{\varepsilon}|^{2}}{2}-\frac{W(\phi^{\varepsilon})}{\varepsilon}\right)_{+}(x, t_0)\, d x\leq \mathcal{L}^d(A)C\varepsilon^{-\lambda} \leq c_4\varepsilon^{\lambda'-\lambda}r^{d-1}.
\end{align}
Here, the constant $C$ is as in Proposition \ref{prop:upper-bound-discrepancy} and $c_4:=4^{d-1}10^dCc_1^dc_2^{-1}\omega_d\omega_{d-1}D_1$.

\medskip

We now estimate $\int_{B_{r}(y)\setminus A}\left(\frac{\varepsilon|\nabla \phi^{\varepsilon}|^{2}}{2}-\frac{W(\phi^{\varepsilon})}{\varepsilon}\right)_{+}(x, t_0)\, d x$. Let $\phi\in\mathrm{Lip}(B_{2r}(y))$ satisfying
\begin{align}\label{eq:phi-definition}
\phi(x) = \begin{cases} 1 & \text{if } x \in B_r(y)\setminus A, \\ 0 & \text{if } \mathrm{dist}(x, B_r(y)\setminus A) \geq \varepsilon^{\lambda'}, \end{cases}
\qquad|\nabla\phi|\leq 2\varepsilon^{-\lambda'},\quad\text{and}\quad0\leq\phi\leq1.
\end{align}
Since $r\geq\varepsilon^{\lambda'}$, $2c_1\varepsilon^{\lambda'}>\varepsilon^{\lambda'}$, and by the definitions of $\widetilde{A}$ and $\phi$, we have $\mathrm{supp}(\phi)\cap\widetilde{A}=\emptyset$, which implies
\begin{align}\label{eq:on-support-phi}
|\phi^{\varepsilon}(x, s)| \geq \sqrt{\frac23}\quad\left(\text{and thus,}\,\, W''(\phi^{\varepsilon}(x,s))\geq2\right), \quad \text{for }\, x \in \mathrm{supp}(\phi), \ s \in [t_0 - \varepsilon^{2\lambda'}, t_0].
\end{align}
We differentiate the equation \eqref{eq:phi-epsilon} with respect to $x_j$ for each $j$, multiply by $\phi^2\phi^{\varepsilon}_{x_j}$, sum over $j=1,\cdots,d$, and integrate over $\Omega$ (here, $\Omega$ represents the whole domain $\R^d$) to get
\begin{align*}
\frac{d}{dt} \int_{\Omega} \frac{1}{2} |\nabla \phi^{\varepsilon}|^2 \phi^2 \,dx &= \int_{\Omega} \left( \nabla \phi^{\varepsilon} \cdot \Delta \nabla \phi^{\varepsilon} - \frac{W''(\phi^{\varepsilon})}{\varepsilon^2} |\nabla \phi^{\varepsilon}|^2 \right) \phi^2 \,dx \\
&\qquad + \int_{\Omega} \left( \frac{\sqrt{2W(\phi^{\varepsilon})}}{\varepsilon} \nabla g^{\varepsilon} \cdot \nabla \phi^{\varepsilon} + g^{\varepsilon} \frac{W'(\phi^{\varepsilon})}{\varepsilon \sqrt{2W(\phi^{\varepsilon})}} |\nabla \phi^{\varepsilon}|^2 \right) \phi^2 \,dx.
\end{align*}
By integration by parts and the Cauchy-Schwarz inequality, we further obtain
\begin{align*}
\frac{d}{dt} \int_{\Omega} \frac{1}{2} |\nabla \phi^{\varepsilon}|^2 \phi^2 \,dx \
&\leq \int_{\Omega} |\nabla \phi|^2 |\nabla \phi^{\varepsilon}|^2 \,dx - \int_{\Omega} \frac{W''(\phi^{\varepsilon})}{\varepsilon^2} |\nabla \phi^{\varepsilon}|^2 \phi^2 \,dx \\
&\quad + \int_{\Omega} |\nabla g^{\varepsilon}| |\nabla \phi^{\varepsilon}| \frac{\sqrt{2W(\phi^{\varepsilon})}}{\varepsilon} \phi^2 \,dx + \int_{\Omega} |g^{\varepsilon}| \frac{|W'(\phi^{\varepsilon})|}{\varepsilon \sqrt{2W(\phi^{\varepsilon})}} |\nabla \phi^{\varepsilon}|^2 \phi^2 \,dx.
\end{align*}
By using \eqref{eq:phi-definition}, \eqref{eq:on-support-phi}, Young's inequality, and the fact that $\frac{(W'(s))^2}{2W(s)}$ is smooth, we obtain
\begin{align}\label{eq:part2-main-ineq-1-lemma4.6}
\frac{d}{dt} \int_{\Omega} \frac{1}{2} |\nabla \phi^{\varepsilon}|^2 \phi^2 \,dx \
&\leq 4\varepsilon^{-2\lambda'}\int_{\mathrm{supp}(\phi)\cap \Omega} |\nabla \phi^{\varepsilon}|^2 \,dx - \int_{\Omega} \frac{2}{\varepsilon^2} |\nabla \phi^{\varepsilon}|^2 \phi^2 \,dx\notag \\
&\qquad + \frac12\varepsilon^{-2\lambda'}\int_{\Omega} |\nabla \phi^{\varepsilon}|^2\phi^4\,dx + \frac12\varepsilon^{2\lambda'}\int_{\mathrm{supp}(\phi)\cap\Omega} |\nabla g^{\varepsilon}|^2\frac{2W(\phi^{\varepsilon})}{\varepsilon^2} \,dx\notag\\
&\qquad\qquad\qquad\qquad\qquad+ \sup_{s\in[-1,1]}\left|\frac{W'(s)}{\sqrt{2W(s)}}\right|\|g^{\varepsilon}\|_{L^{\infty}(\Omega)}\varepsilon^{-1}\int_{\Omega}|\nabla \phi^{\varepsilon}|^2 \phi^2 \,dx.
\end{align}

We bound some terms in \eqref{eq:part2-main-ineq-1-lemma4.6} by using \eqref{eq:forcing-term-rescaling}. By the fact that $\mathrm{supp}(\phi)\subset B_{2r}(y)$ and the assumption \eqref{eq:temporary-density-ratio}, it holds, with power comparison $2\lambda'-(\alpha+1)-1=-\frac12-\frac34\alpha$, that
\begin{align}\label{eq:part2-main-ineq-1-lemma4.6-term1}
\frac12\varepsilon^{2\lambda'}\int_{\mathrm{supp}(\phi)\cap\Omega} |\nabla g^{\varepsilon}|^2\frac{2W(\phi^{\varepsilon})}{\varepsilon^2} \,dx\leq C\varepsilon^{2\lambda'-(\alpha+1)-1}D_1\omega_{d-1}(2r)^{d-1}=c_5r^{d-1}\varepsilon^{-\frac12-\frac34\alpha}
\end{align}
with $c_5:=2^{d-1}\omega_{d-1}CD_1$. Here, $C>0$ is a constant that depends only on the data. Also, by \eqref{eq:forcing-term-rescaling} again and power comparison $2\lambda'-\left(1+\frac{1}{2}\alpha\right)=\frac12-\frac14\alpha\in\left(\frac14,\frac12\right)$, we have
\begin{align}\label{eq:part2-main-ineq-1-lemma4.6-term2}
\sup_{s\in[-1,1]}\left|\frac{W'(s)}{\sqrt{2W(s)}}\right|\|g^{\varepsilon}\|_{L^{\infty}(\Omega)}\varepsilon^{-1}\int_{\Omega}|\nabla \phi^{\varepsilon}|^2 \phi^2 \,dx&\leq C\varepsilon^{-1-\frac{1}{2}\alpha}\int_{\mathrm{supp}(\phi)\cap\Omega}\frac{1}{2}|\nabla\phi^{\varepsilon}|^2\,dx\notag\\
&\leq\frac12\varepsilon^{-2\lambda'}\int_{\mathrm{supp}(\phi)\cap\Omega}|\nabla\phi^{\varepsilon}|^2\,dx
\end{align}
for $\varepsilon\in(0,1)$ small enough depending only the data. Consequently, continuing the estimate \eqref{eq:part2-main-ineq-1-lemma4.6} with \eqref{eq:part2-main-ineq-1-lemma4.6-term1}, \eqref{eq:part2-main-ineq-1-lemma4.6-term2}, we have
\begin{align}\label{eq:part2-main-ineq-2-lemma4.6}
\frac{d}{dt} \int_{\Omega} \frac{1}{2} |\nabla \phi^{\varepsilon}|^2 \phi^2 \,dx \
&\leq 5\varepsilon^{-2\lambda'}\int_{\mathrm{supp}(\phi)\cap\Omega}|\nabla\phi^{\varepsilon}|^2\,dx- \int_{\Omega} \frac{2}{\varepsilon^2} |\nabla \phi^{\varepsilon}|^2 \phi^2 \,dx + c_5r^{d-1}\varepsilon^{-\frac12-\frac34\alpha}.
\end{align}

\medskip

We thus have, by \eqref{eq:part2-main-ineq-2-lemma4.6},
\begin{align*}
\frac{d}{dt}\left(e^{4\varepsilon^{-2}(t-t_0)}\int_{\Omega}\frac12|\nabla\phi^{\varepsilon}|^2\phi^2\,dx \right)
&=e^{4\varepsilon^{-2}(t-t_0)}\left(\frac{4}{\varepsilon^2}\int_{\Omega}\frac12|\nabla\phi^{\varepsilon}|\phi^2\,dx+\frac{d}{dt}\int_{\Omega}\frac12|\nabla\phi^{\varepsilon}|^2\phi^2\,dx\right)\\
&\leq e^{4\varepsilon^{-2}(t-t_0)}\left(5\varepsilon^{-2\lambda'}\int_{\mathrm{supp}(\phi)\cap\Omega}|\nabla\phi^{\varepsilon}|^2\,dx + c_5r^{d-1}\varepsilon^{-\frac12-\frac34\alpha}\right).
\end{align*}
By integrating on $[t_0-\varepsilon^{2\lambda'},t_0]$, we obtain
\begin{align*}
\int_{\Omega}\frac12|\nabla\phi^{\varepsilon}|^2\phi^2(x,&t_0)\,dx-e^{-4\varepsilon^{-2+2\lambda'}}\int_{\Omega}\frac12|\nabla\phi^{\varepsilon}|^2\phi^2(x,t_0-\varepsilon^{2\lambda'})\,dx\\
&\leq\int_{t_0-\varepsilon^{2\lambda'}}^{t_0}e^{4\varepsilon^{-2}(\tau-t_0)}\left(5\varepsilon^{-2\lambda'}\int_{\mathrm{supp}(\phi)\cap\Omega}|\nabla\phi^{\varepsilon}|^2(x,\tau)\,dx + c_5r^{d-1}\varepsilon^{-\frac12-\frac34\alpha}\right)\,d\tau.
\end{align*}
Put
\begin{align*}
M:=\sup_{\tau\in[t_0-\varepsilon^{2\lambda'},t_0]}\int_{\mathrm{supp}(\phi)\cap\Omega}\frac{1}{2}|\nabla\phi^{\varepsilon}|^2(x,\tau)\,dx.
\end{align*}
Then, with $2\lambda'-\frac12-\frac34\alpha=1-\frac12\alpha$, we have
\begin{align}\label{eq:part2-main-ineq-3-lemma4.6}
&\int_{\Omega}\frac12|\nabla\phi^{\varepsilon}|^2\phi^2(x,t_0)\,dx\notag\\
&\leq e^{-4\varepsilon^{-2+2\lambda'}}\int_{\Omega}\frac12|\nabla\phi^{\varepsilon}|^2\phi^2(x,t_0-\varepsilon^{2\lambda'})\,dx + \int_{t_0-\varepsilon^{2\lambda'}}^{t_0}e^{4\varepsilon^{-2}(\tau-t_0)}\left(5\varepsilon^{-2\lambda'}M + c_5r^{d-1}\varepsilon^{-\frac12-\frac34\alpha}\right)\,d\tau\notag\\
&\leq\left(e^{-4\varepsilon^{-2+2\lambda'}}+\frac52\varepsilon^{2-2\lambda'}\right)M+c_5r^{d-1}\varepsilon^{1-\frac12\alpha}\notag\\
&\leq 3\varepsilon^{2-2\lambda'}M+c_5r^{d-1}\varepsilon^{1-\frac12\alpha}.
\end{align}
We note that, by $\mathrm{supp}(\phi)\subset B_{2r}(y)$ and the assumption \eqref{eq:temporary-density-ratio}, we have
\begin{align}\label{eq:temporary-applied-to-M}
\varepsilon M\leq \omega_{d-1}D_1(2r)^{d-1}.
\end{align}
We also note that $B_r(y)\setminus A\subset\{\phi=1\}$ by the definition of $\phi$. Therefore, by \eqref{eq:part2-main-ineq-3-lemma4.6}, \eqref{eq:temporary-applied-to-M}, we obtain
\begin{align}\label{eq:part2-main-ineq-last-lemma4.6}
\int_{B_r(y)\setminus A}\frac{\varepsilon|\nabla\phi^{\varepsilon}|^2}{2}(x,t_0)\,dx&\leq\int_{\Omega}\frac{\varepsilon|\nabla\phi^{\varepsilon}|^2}{2}\phi^2(x,t_0)\,dx\notag\\
&\leq3\cdot2^{d-1}D_1\omega_{d-1}r^{d-1}\varepsilon^{2-2\lambda'}
+c_5r^{d-1}\varepsilon^{2-\frac12\alpha}\leq c_6r^{d-1}\varepsilon^{\lambda'-\lambda}
\end{align}
with $c_6=3\cdot2^{d-1}D_1\omega_{d-1}+c_5$ that depends only on $D_1$ and the data. Adding \eqref{eq:part2-main-ineq-last-lemma4.6} to \eqref{eq:part1-lemma4.6}, we prove the lemma with
\begin{align*}
c_3:=4^{d-1}10^dCc_1^dc_2^{-1}\omega_d\omega_{d-1}D_1 + 2^{d-1}\omega_{d-1}CD_1 + 3\cdot2^{d-1}D_1\omega_{d-1},
\end{align*}
which depends only on $D_1,T_1$, and the data.
\end{proof}


\begin{lemma}\label{lem:lemma4.7-TT16}
Assume \eqref{eq:temporary-density-ratio} and let $\varepsilon_4\in(0,1)$ be as in Lemma \ref{lem:lemma4.6-TT16}. Then, there exist $c=c(D_1,T_1)>0$ and $\varepsilon_5=\varepsilon_5(D_1,T_1)\in(0,\varepsilon_4)$ such that for any $(y,t)\in\Omega\times[0,T_1]$, $s>t$, and $\varepsilon\in(0,\varepsilon_5)$, we have
\begin{align*}
\int_{0}^{t} & \left( \int_{\R^d} \left( \frac{\varepsilon|\nabla\phi^{\varepsilon}|^2}{2} - \frac{W(\phi^{\varepsilon})}{\varepsilon} \right)_+ \frac{\rho_{(y,s)}(x, \tau)}{2(s - \tau)} \, dx \right) \, d\tau \\
&\leq c \varepsilon^{\lambda' - \lambda} \left(1+|\log (\varepsilon)|+(\log s)_+\right)+\frac{2^{2d+1}\sqrt{\pi}}{\omega_d}C\sqrt{s}.
\end{align*}
Here, $C$ is the constant as in \eqref{eq:estimate-from-energy-dissipation}.
\end{lemma}
\begin{proof}
If $t\leq2\varepsilon^{2\lambda'}$, we use Proposition \ref{prop:upper-bound-discrepancy} and the fact that $\int_{\R^d}\rho_{(y,s)}(x,\tau)\,dx=\sqrt{4\pi(s-\tau)}$ to have
\begin{align*}
\int_{0}^{t} \left( \frac{1}{2(s - \tau)} \int_{\R^d} \left( \frac{\varepsilon|\nabla\phi^{\varepsilon}|^2}{2} - \frac{W(\phi^{\varepsilon})}{\varepsilon} \right)_+ \rho_{(y,s)}(x, \tau) \, dx \right) \, d\tau \
\leq \int_{0}^{t} \frac{C\varepsilon^{-\lambda}\sqrt{\pi}}{\sqrt{s - \tau}} \, d\tau \leq 2\sqrt{2\pi}C \varepsilon^{\lambda'-\lambda}.
\end{align*}
Here, $C>0$ is the constant as in Proposition \ref{prop:upper-bound-discrepancy}. In case, $s-2\varepsilon^{2\lambda'}\leq t<s$, we obtain with a similar argument that
\begin{align*}
\int_{s-2\varepsilon^{2\lambda'}}^{t} \left( \frac{1}{2(s - \tau)} \int_{\R^d} \left( \frac{\varepsilon|\nabla\phi^{\varepsilon}|^2}{2} - \frac{W(\phi^{\varepsilon})}{\varepsilon} \right)_+ \rho_{(y,s)}(x, \tau) \, dx \right) \, d\tau  \leq 2\sqrt{2\pi}C \varepsilon^{\lambda'-\lambda}.
\end{align*}

We now estimate the integral on $[2\varepsilon^{2\lambda'},t]$ with $t\leq s-2\varepsilon^{2\lambda'}$. The integral on $B_{\varepsilon^{\lambda'}}(y)$ is estimated with Proposition \ref{prop:upper-bound-discrepancy} and $s-t\geq2\varepsilon^{2\lambda'}$ as
\begin{align*}
\int_{2\varepsilon^{2\lambda'}}^{t} \frac{1}{2(s - \tau)} &\int_{B_{\varepsilon^{\lambda'}}(y)} \left( \frac{\varepsilon|\nabla\phi^{\varepsilon}|^2}{2} - \frac{W(\phi^{\varepsilon})}{\varepsilon} \right)_+ \rho_{(y,s)}(x, \tau) \, dx d\tau \\
&\leq \int_{2\varepsilon^{2\lambda'}}^{t} \frac{C\varepsilon^{-\lambda}\varepsilon^{d\lambda'}\omega_d}{2(s - \tau)^{\frac{d+1}{2}} (\sqrt{4\pi})^{d-1}} \, d\tau \leq \frac{C\varepsilon^{\lambda'-\lambda}\omega_d}{(\sqrt{8\pi})^{d-1}(d - 1)}.
\end{align*}

We estimate on $B_1(y)\setminus B_{\varepsilon^{\lambda'}(y)}$. By using Lemma \ref{lem:lemma4.6-TT16}, we have
\begin{align*}
&\int_{2\varepsilon^{2\lambda'}}^{t} \frac{1}{2(s - \tau)} \int_{B_1(y)\setminus B_{\varepsilon^{\lambda'}}(y)} \left( \frac{\varepsilon|\nabla\phi^{\varepsilon}|^2}{2} - \frac{W(\phi^{\varepsilon})}{\varepsilon} \right)_+ \rho_{(y,s)}(x,\tau) \, dx d\tau \\
&\leq \int_{2\varepsilon^{2\lambda'}}^{t} \frac{1}{2(s - \tau)^{\frac{d+1}{2}} (\sqrt{4\pi})^{d-1}}
\int_{0}^{1} \left( \int_{B_1(y)\cap\left\{x\, :\, e^{-\frac{|x-y|^2}{4(s-\tau)}} \geq l\right\} \setminus B_{\varepsilon^{\lambda'}}(y)} \left( \frac{\varepsilon|\nabla\phi^{\varepsilon}|^2}{2} - \frac{W(\phi^{\varepsilon})}{\varepsilon} \right)_+ \,dx\right) \, dld\tau \\
&\leq \int_{2\varepsilon^{2\lambda'}}^{t} \frac{1}{2(s - \tau)^{\frac{d+1}{2}} (\sqrt{4\pi})^{d-1}}
\int_{0}^{1} \left( \int_{B_1(y)\cap B_{(4(s-\tau)(-\log(l)))^{1/2}}(y)\setminus B_{\varepsilon^{\lambda'}}(y)} \left( \frac{\varepsilon|\nabla\phi^{\varepsilon}|^2}{2} - \frac{W(\phi^{\varepsilon})}{\varepsilon} \right)_+ \,dx\right) \, dld\tau\\
&\leq \int_{2\varepsilon^{2\lambda'}}^{t} \frac{1}{2(s - \tau)^{\frac{d+1}{2}} (\sqrt{4\pi})^{d-1}}
\int_{0}^{1} c_3\varepsilon^{\lambda'-\lambda}(4(s-\tau)(-\log(l)))^{\frac{d-1}{2}}\, dld\tau \\
&\leq \int_{2\varepsilon^{2\lambda'}}^t\,\frac{c_3\varepsilon^{\lambda'-\lambda}}{2\omega_{d-1}(s-\tau)}d\tau \\
&\leq \frac{c_3}{2\omega_{d-1}}\varepsilon^{\lambda'-\lambda}\left(\log(2)+2\lambda'|\log(\varepsilon)|+(\log s)_+\right).
\end{align*}

We now estimate on $\R^d\setminus B_1(y)$ as, using \eqref{eq:from-periodicity},
\begin{align*}
&\int_{2\varepsilon^{2\lambda'}}^{t} \frac{1}{2(s - \tau)} \int_{\R^d\setminus B_1(y)} \left( \frac{\varepsilon|\nabla\phi^{\varepsilon}|^2}{2} - \frac{W(\phi^{\varepsilon})}{\varepsilon} \right)_+ \rho_{(y,s)}(x,\tau) \, dx d\tau \\
&\leq \int_{2\varepsilon^{2\lambda'}}^{t} \frac{1}{2(s - \tau)^{\frac{d+1}{2}} (\sqrt{4\pi})^{d-1}}
\int_{0}^{1} \left( \int_{\left\{x\, :\, e^{-\frac{|x-y|^2}{4(s-\tau)}} \geq l\right\} \setminus B_{1}(y)} \left( \frac{\varepsilon|\nabla\phi^{\varepsilon}|^2}{2} + \frac{W(\phi^{\varepsilon})}{\varepsilon} \right) \,dx\right) \, dld\tau \\
&\leq \int_{2\varepsilon^{2\lambda'}}^{t} \frac{1}{2(s - \tau)^{\frac{d+1}{2}} (\sqrt{4\pi})^{d-1}}
\int_{0}^{1} \mu_t^{\varepsilon}\left(B_{(4(s-\tau)(-\log(l)))^{1/2}}(y)\setminus B_{1}(y)\right) \, dld\tau\\
&\leq \int_{2\varepsilon^{2\lambda'}}^{t} \frac{2^{2d}C}{\pi^{\frac{d-1}{2}}\sqrt{s-\tau}}\int_{0}^{1} (-\log(l))^{\frac{d}{2}}\, dld\tau \\
&\leq \int_{2\varepsilon^{2\lambda'}}^t\frac{2^{2d}\sqrt{\pi}C}{\omega_d\sqrt{s-\tau}}\,d\tau \\
&\leq \frac{2^{2d+1}\sqrt{\pi}}{\omega_d}C\sqrt{s}.
\end{align*}

All the above cases together, we finish the proof.
\end{proof}

We now prove the following $\varepsilon$-uniform upper bound of $D^{\varepsilon}(t)$ which is the goal of this subsection.

\begin{proposition}\label{prop:upper-bound-density-ratio}
For any $T>0$, there exist $D_T>0$ and $\varepsilon_2=\varepsilon_2(T)\in(0,\varepsilon_1)$ such that
\begin{align*}
    \sup_{\varepsilon\in(0,\varepsilon_2),\ t\in[0,T]}D^{\varepsilon}(t)\leq D_T.
\end{align*}
Here, $\varepsilon_1\in(0,1)$ is as in Proposition \ref{prop:L2-estimate}.
\end{proposition}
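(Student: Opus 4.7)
The plan is to apply the monotonicity formula (Proposition \ref{prop:monotonicity-formula}) with backward heat kernel centered at $(y, s)$ for $s := t + r^2$, reducing the density ratio $r^{1-d}\mu_t^{\varepsilon}(B_r(y))$ to an estimate on the one-point quantity $u(t) := \int_{\mathbb{R}^d} \rho_{(y,s)}(x, t)\,d\mu_t^{\varepsilon}(x)$, and then use Proposition \ref{prop:upper-bound-discrepancy} to bound the positive contribution from $d\xi^{\varepsilon}$ in that formula. A bootstrap continuation in time is needed to close the circular dependence that the paper itself flags.

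First, the elementary lower bound $\rho_{(y,s)}(x, t) \geq c_d\, r^{-(d-1)}$ on $B_r(y)$ (which uses $s - t = r^2$) gives $c_d\, r^{1-d}\mu_t^{\varepsilon}(B_r(y)) \leq u(t)$. Applying \eqref{eq:monotonicity-formula-integrating-factor} from $t_1 = 0$ to $t_2 = t$ yields
\[
u(t) \leq C_T\left[u(0) + \int_0^t \int_{\mathbb{R}^d} \frac{\rho_{(y,s)}(x, \tau)}{2(s - \tau)}\,d\xi_\tau^{\varepsilon}(x)\,d\tau\right].
\]
The initial term is controlled uniformly by $u(0) \leq C D_0$ via a standard layer-cake decomposition over dyadic annuli, using \eqref{eq:initial-density-ratio}. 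The discrepancy integral is the heart of the matter: writing $\xi^{\varepsilon} = \xi^{\varepsilon,+} - \xi^{\varepsilon,-}$ and dropping the non-positive $-d\xi^{\varepsilon,-}$ contribution, two facts on $\xi^{\varepsilon,+}$ are combined: the pointwise density bound $\xi^{\varepsilon,+} \leq C\varepsilon^{-(1+\alpha/2)/2}$ from Proposition \ref{prop:upper-bound-discrepancy}, and the structural inequality that $\xi^{\varepsilon,+}$ is dominated by the density of $\mu_\tau^{\varepsilon}$. I would split the $\tau$-integral at the scale $\delta := \varepsilon^{1+\alpha/2}$: on $\{s - \tau \leq \delta\}$, the pointwise bound combined with the identity $\int_{\mathbb{R}^d} \rho_{(y,s)}(x,\tau)\,dx = (4\pi(s - \tau))^{1/2}$ yields a contribution of order $C\varepsilon^{-(1+\alpha/2)/2}\sqrt{\delta} = O(1)$; on $\{s - \tau > \delta\}$, the domination $\xi^{\varepsilon,+} \leq$ (density of $\mu_\tau^{\varepsilon}$) reduces the integrand to $u(\tau)/(s - \tau)$, and the $L^2$-control on $\lambda^{\varepsilon}$ from Proposition \ref{prop:L2-estimate} supplies the exponential integrating factor needed for a Gronwall-type closure on $u(\tau)$.

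The main obstacle is the circular dependence: the Gronwall step in the far regime $\{s - \tau > \delta\}$ uses a prior control on $u(\tau)$, which is equivalent to what we are trying to prove. To close this, following \cite[Section 6]{K19}, I would introduce the continuation time $T^{\ast} := \sup\{t \in [0, T] : \sup_{\tau \in [0, t]} D^{\varepsilon}(\tau) \leq M\}$ for a constant $M$ chosen strictly larger than the bound coming from $u(0) \leq CD_0$, and restrict to $\varepsilon \in (0, \varepsilon_2)$ with $\varepsilon_2 \in (0, \varepsilon_1)$ sufficiently small so that the $O(1)$ contribution from the near regime and the Gronwall factor $e^{\frac{1}{2}\int_0^T \max\{|\lambda^{\varepsilon}|^2,1\}}$ remain controlled. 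On $[0, T^{\ast}]$ the density ratio bound $M$ closes the estimate and, upon choosing $M$ large enough and $\varepsilon_2$ small enough, forces a strict improvement $D^{\varepsilon}(T^{\ast}) < M$; by continuity this contradicts the maximality of $T^{\ast}$ unless $T^{\ast} = T$, which yields the desired uniform bound $D_T := M$ on $[0, T]$ for all $\varepsilon \in (0, \varepsilon_2)$.
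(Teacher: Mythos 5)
Your overall scaffolding — monotonicity formula \eqref{eq:monotonicity-formula-integrating-factor}, the pointwise bound on $\xi^{\varepsilon}$ from Proposition \ref{prop:upper-bound-discrepancy}, layer-cake bound on the initial term $u(0)\leq CD_0$, and a bootstrap/continuation argument — is the same framework the paper uses (the paper phrases the continuation as a ``first exceedance'' contradiction rather than a $T^*$, which is equivalent). The genuine gap is in your far regime $\{s-\tau>\delta\}$. The domination $\xi^{\varepsilon,+}\leq$ (density of $\mu^\varepsilon_\tau$) reduces that contribution to $\int_0^{s-\delta}\frac{u(\tau)}{2(s-\tau)}\,d\tau$, which is of order $\log(s/\delta)\sim|\log\varepsilon|$ even when $u$ is a priori bounded by the continuation constant; if instead you promote $\frac{1}{2(s-\tau)}$ into the Gronwall integrating factor, the factor $(s/\delta)^{1/2}\sim\varepsilon^{-(1+\alpha/2)/2}$ diverges. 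No choice of $\delta$ rescues this: shrinking $\delta$ worsens the far regime, enlarging it blows up the near regime. Note also that the $L^2$ control on $\lambda^{\varepsilon}$ from Proposition \ref{prop:L2-estimate} only absorbs the $(g^{\varepsilon})^2$ term in \eqref{eq:monotonicity-formula}; it is already used to produce the constant $C_T$ in \eqref{eq:monotonicity-formula-integrating-factor} and has nothing to offer against the singular $\frac{1}{s-\tau}$ weight multiplying the discrepancy.

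What is missing is the localization of $\xi^{\varepsilon,+}$ to a thin transition tube: combining the $L^\infty$ bound $\sup\xi^{\varepsilon,+}\leq C\varepsilon^{-\lambda}$ with $\lambda=\tfrac12(1+\alpha/2)$, the rescaled parabolic gradient estimates \eqref{eq:L-infty-g-ep-rescaled}, Lemma \ref{lem:gradient-estimate}, and the continuation hypothesis $D^{\varepsilon}\leq D_1$ on $[0,\tilde t]$, one shows that $\xi^{\varepsilon,+}$ is supported, up to an $o(1)$ error, in a tube of width $\sim\varepsilon^{\lambda'}$ with $\lambda'=\tfrac12(1+\lambda)$ around a set of $(d-1)$-density controlled by $D_1$. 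That is precisely the content of \cite[Lemma 6.7]{K19}, adapted to the spatially dependent forcing via the proof of \cite[Proposition 5.8]{NT25}, and it is what produces the bound \eqref{eq:estimate-xi^+} of order $\varepsilon^{\lambda'-\lambda}(1+|\log\varepsilon|+(\log s)^+)\to0$, uniformly in $y$, $r\in(0,1)$, $s\leq T+1$. The paper's contradiction closes because this contribution is eventually less than $\tfrac12$; the crude domination $\xi^{\varepsilon,+}\leq\mu^\varepsilon_\tau$ sees none of this cancellation, and that is exactly where your argument breaks.
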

\begin{proof}

Fix $T>0$ and find $C_T>0$ as in Proposition \ref{prop:monotonicity-formula}. Let  $C>0$ be the constant as in \eqref{eq:estimate-from-energy-dissipation} depending only the data. Let
\begin{align*}
C_1:=e^{1/4}(4\pi)^{\frac{d-1}{2}}(\omega_{d-1})^{-1}C_T(D_0+1),\qquad D_1:=C_1+\frac{2^{2d+1}\sqrt{\pi}}{\omega_d}C\sqrt{T+16}+1.
\end{align*}
We assume for the contradiction that there exist $y\in\Omega,\ \widetilde{t}\in(0,T],\ r\in(0,4)$ such that $\frac{\mu_{\tilde{t}}^{\varepsilon}(B_r(y))}{\omega_{d-1}r^{d-1}} > C_1
$ and $\sup_{t\in[0,\widetilde{t}]}D^{\varepsilon}(t)\leq D_1$. Then, by applying \eqref{eq:monotonicity-formula-integrating-factor} with $t_1=0,\ t_2=\widetilde{t},\ s=\widetilde{t}+r^2$, we have
\begin{align}\label{eq:integration-monotonicity}
\int_{\mathbb{R}^{d}} \rho_{(y, s)}(x, t) d \mu_{t}^{\varepsilon}(x)\bigg|_{t=\widetilde{t}} 
\leq C_T\left(\int_{\mathbb{R}^{d}} \rho_{(y, s)}(x, t) d \mu_{t}^{\varepsilon}(x)\bigg|_{t=0}+\int_{0}^{\widetilde{t}}\int_{\mathbb{R}^d}\frac{\rho_{(y, s)}(x, t)}{2(s-t)}d\xi_{t}^{\varepsilon}\,dt\right).
\end{align}

We bound each term of \eqref{eq:integration-monotonicity}. First of all, by the choices of $y\in\Omega,\ \widetilde{t}\in(0,T],\ r\in(0,4)$, we have
\begin{align*}
\int_{\mathbb{R}^d} \rho_{(y,s)}(x,\tilde{t}) \, d\mu_{\tilde{t}}^\varepsilon &\ge\frac{1}{(4\pi r^2)^{\frac{d-1}{2}}} \int_{B_r(y)} e^{-\frac{|x-y|^2}{4r^2}} \, d\mu_{\tilde{t}}^\varepsilon(x)\\
&\ge \frac{e^{-\frac{1}{4}}}{(4\pi)^{\frac{d-1}{2}}r^{d-1}} \mu_{\tilde{t}}^\varepsilon(B_r(y))\\
&\ge \frac{e^{-\frac{1}{4}}\text{$\omega_{d-1}$}}{(4\pi)^{\frac{d-1}{2}}}C_1.
\end{align*}
Next,
\begin{align*}
\int_{\mathbb{R}^d} \rho_{(y,s)}(x,0) \, d\mu_0^\varepsilon(x) &= \frac{1}{(4\pi s)^{\frac{d-1}{2}}} \int_{\mathbb{R}^d} e^{-\frac{|x-y|^2}{4s}} \, d\mu_0^\varepsilon(x)\\
&= \frac{1}{(4\pi s)^{\frac{d-1}{2}}} \int_0^1 \mu_0^\varepsilon \left( \left\{ x \in \mathbb{R}^d : e^{-\frac{|x-y|^2}{4s}} \ge \ell \right\} \right) d\ell \\
&\le D_0 \cdot \frac{\omega_{d-1}}{\pi^{\frac{d-1}{2}}} \int_0^1 \left(-\log \ell \right)^{\frac{d-1}{2}} d\ell\\
&\le D_0.
\end{align*}
We now apply Lemma \ref{lem:lemma4.7-TT16} with the availability of $\sup_{t\in[0,\widetilde{t}]}D^{\varepsilon}(t)\leq D_1$, for $\varepsilon\in(0,1)$ small enough depending on $D_1,T$ (and thus only on $T$), we obtain, with $s\leq T+16$, that
\begin{align*}
\frac{e^{-1/4}\omega_{d-1}}{(4\pi)^{\frac{d-1}{2}}}C_1\leq C_T\left(D_0+c\varepsilon^{\lambda'-\lambda}\left(1+|\log(\varepsilon)|+\log(T+16)\right)+\frac{2^{2d+1}\sqrt{\pi}}{\omega_d}C\sqrt{T+16}\right)
\end{align*}
where $c>0$ is the constant depending on $D_1,T$ (and thus only on $T$) as in Lemma \ref{lem:lemma4.7-TT16}.


Therefore, the bounds of each term of \eqref{eq:integration-monotonicity} yield
\begin{align*}
\frac{e^{-1/4}\omega_{d-1}}{(4\pi)^{\frac{d-1}{2}}}C_1\leq C_T\left(D_0+\frac{2^{2d+1}\sqrt{\pi}}{\omega_d}C\sqrt{T+16}+\frac12\right),
\end{align*}
which contradicts the choice of $C_1$ for $\varepsilon\in(0,1)$ sufficiently small depending on $T>0$.
\end{proof}

For future uses, we leave the following corollary.

\begin{corollary}\label{cor:cor4.1-TT16-restate}
For any $T>0$, there exist $c'=c'(T)>0$ and $\epsilon=\epsilon(T)\in(0,1)$ satisfying the following:
\begin{itemize}
\item[(i)] For any $\varepsilon\in(0,\epsilon)$, $y\in\Omega$, $r\in[0,1]$, and $t\in[2\varepsilon^{2\lambda'},T]$, it holds that
\begin{align*}
\int_{0}^{r} \frac{1}{\tau^d} \left(\int_{B_{\tau}(y)} \left( \frac{\varepsilon |\nabla \phi^{\varepsilon}|^2}{2} - \frac{W(\phi^{\varepsilon})}{\varepsilon} \right)_{+} (x, t) \, dx\right)d\tau \leq c' \varepsilon^{\lambda' - \lambda} |\log \varepsilon|.
\end{align*}
\item[(ii)] For any $y\in\Omega$, $0\leq t_1\leq t_2<s$ with $t_2\leq T$, and $\varepsilon\in(0,\epsilon)$, we have
\begin{align*}
\int_{t_1}^{t_2} & \left(  \int_{\R^d} \left( \frac{\varepsilon|\nabla\phi^{\varepsilon}|^2}{2} - \frac{W(\phi^{\varepsilon})}{\varepsilon} \right)_+ \frac{\rho_{(y,s)}(x, \tau)}{2(s - \tau)} \, dx \right) \, d\tau \\
&\leq c' \varepsilon^{\lambda' - \lambda} \left(1+|\log (\varepsilon)|+(\log s)_+\right)+\frac{2^{2d+2}\sqrt{\pi}C}{\omega_d}\cdot\frac{t_2-t_1}{\sqrt{s-t_1}}.
\end{align*}
Here, $C$ is the constant as in \eqref{eq:estimate-from-energy-dissipation}.
\end{itemize}

\end{corollary}
\begin{proof}
(i) The integral is estimated separately on the range $\tau\in(0,\varepsilon^{\lambda'})$ with Proposition \ref{prop:upper-bound-discrepancy} and on the range $\tau\in(\varepsilon^{\lambda'},1)$ with Lemma \ref{lem:lemma4.7-TT16}, the latter of which is applied from the availability of Proposition \ref{prop:upper-bound-density-ratio}.

(ii) This follows from the same computations of the proof of Lemma \ref{lem:lemma4.7-TT16} with the availability of Proposition \ref{prop:upper-bound-density-ratio}.
\end{proof}

\section{Rectifiability and integrality of $\mu_t$}\label{sec:rectifiability-integrality}

\subsection{Convergence of $\mu^{\varepsilon}$}\label{subsec:convergence-mu}

\begin{lemma}\label{lem:BV-in-t-mu-ep-t}
For any $\varphi\in C_c^2(\Omega;\mathbb{R}_{\geq0})$ and $T>0$, there exists $C(\|\varphi\|_{C^2(\Omega)},T)>0$ such that
\begin{align*}
\int_{0}^{T} \left|\frac{d}{dt} \mu_{t}^{\varepsilon}(\varphi)\right| \mathrm{d}t \leq C(\|\varphi\|_{C^2(\Omega)},T)\quad \text { for any } \varepsilon \in\left(0, \varepsilon_{1}\right).
\end{align*}
Here, $\varepsilon_1\in(0,1)$ is as in Proposition \ref{prop:L2-estimate}.
\end{lemma}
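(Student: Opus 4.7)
The plan is a standard energy-type computation, combined with Cauchy--Schwarz and the estimates already developed in Sections~\ref{sec:settings}--\ref{sec:rectifiability-integrality}.

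First, I differentiate $\mu_t^{\varepsilon}(\varphi)$ in time, integrate by parts in the gradient term, and substitute the equation~\eqref{eq:phi-epsilon} to eliminate $-\varepsilon\Delta\phi^{\varepsilon}+\varepsilon^{-1}W'(\phi^{\varepsilon})=-\varepsilon\phi^{\varepsilon}_t+g^{\varepsilon}\sqrt{2W(\phi^{\varepsilon})}$. This yields
\begin{align*}
\frac{d}{dt}\mu_t^{\varepsilon}(\varphi)=\frac{1}{\sigma}\int_{\Omega}\Bigl(-\varepsilon(\phi^{\varepsilon}_t)^2\varphi+g^{\varepsilon}\sqrt{2W(\phi^{\varepsilon})}\phi^{\varepsilon}_t\varphi-\varepsilon\nabla\phi^{\varepsilon}\cdot\nabla\varphi\,\phi^{\varepsilon}_t\Bigr)dx.
\end{align*}
Applying the triangle inequality, it suffices to control the three space--time integrals of the absolute values.

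The first term is bounded by $\|\varphi\|_{\infty}\int_0^T\int_{\Omega}\varepsilon(\phi^{\varepsilon}_t)^2\,dx\,dt$, which is estimated by \eqref{eq:estimate-from-energy-dissipation}. For the second term I use Cauchy--Schwarz:
\begin{align*}
\int_0^T\!\!\int_{\Omega}|g^{\varepsilon}\sqrt{2W(\phi^{\varepsilon})}\phi^{\varepsilon}_t|\varphi\,dx\,dt\leq\|\varphi\|_{\infty}\left(\int_0^T\!\!\int_{\Omega}\varepsilon(\phi^{\varepsilon}_t)^2\,dx\,dt\right)^{1/2}\left(\int_0^T\!\!\int_{\Omega}\frac{(g^{\varepsilon})^2\cdot 2W(\phi^{\varepsilon})}{\varepsilon}\,dx\,dt\right)^{1/2}.
\end{align*}
The first factor is controlled by \eqref{eq:estimate-from-energy-dissipation}. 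Since $|g^{\varepsilon}(x,t)|\leq C\max\{|\lambda^{\varepsilon}(t)|,1\}$ pointwise by construction~\eqref{eq:forcing}, the second factor is bounded by Lemma~\ref{lem:L2-g-1-varepilson} (inequality~\eqref{eq:L-2-g-1-ep}), which is the point where Proposition~\ref{prop:L2-estimate} is consumed and $\varepsilon_1$ enters. For the third term, Cauchy--Schwarz again gives
\begin{align*}
\int_0^T\!\!\int_{\Omega}\varepsilon|\nabla\phi^{\varepsilon}||\nabla\varphi||\phi^{\varepsilon}_t|\,dx\,dt\leq\|\nabla\varphi\|_{\infty}\left(\int_0^T\!\!\int_{\Omega}\varepsilon(\phi^{\varepsilon}_t)^2\,dx\,dt\right)^{1/2}\left(\int_0^T\!\!\int_{\Omega}\varepsilon|\nabla\phi^{\varepsilon}|^2\,dx\,dt\right)^{1/2},
\end{align*}
and both factors are controlled by \eqref{eq:estimate-from-energy-dissipation} (the second one also picking up a factor of $T$). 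Adding the three bounds yields an upper bound depending only on $\|\varphi\|_{C^2(\Omega)}$ and $T$, which is the claim.

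There is no serious obstacle: the statement is designed to package the a~priori estimates already collected. The only place where genuine information is needed beyond pure energy dissipation is the second term, where the $L^{\infty}$--$L^2$ bound on $\max\{|\lambda^{\varepsilon}|,1\}\sqrt{2W(\phi^{\varepsilon})}/\sqrt{\varepsilon}$ provided by Lemma~\ref{lem:L2-g-1-varepilson} (and hence the restriction $\varepsilon\in(0,\varepsilon_1)$) is essential; this is the reason for the shape of the hypothesis.
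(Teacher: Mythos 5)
Your proof is correct and follows essentially the same route as the paper: differentiate $\mu_t^{\varepsilon}(\varphi)$, integrate by parts, substitute the equation, and bound the three resulting terms using the energy dissipation estimate \eqref{eq:estimate-from-energy-dissipation} together with the $L^2$-bound on $\lambda^{\varepsilon}$ from Proposition~\ref{prop:L2-estimate} (via Lemma~\ref{lem:L2-g-1-varepilson}). The only cosmetic difference is that you pull $\varphi$ out by its sup norm and then apply a global space--time Cauchy--Schwarz, whereas the paper keeps $\varphi$ inside the integrals via a pointwise Young's inequality and the elementary bound $\sup_{\{\varphi>0\}}|\nabla\varphi|^2/\varphi\leq 2\|\varphi\|_{C^2(\Omega)}$ (the place where $\varphi\geq0$ is actually used); that version yields a pointwise-in-$t$ bound $C(1+|\lambda^{\varepsilon}(t)|^2+\int_{\Omega}\varepsilon(\partial_t\phi^{\varepsilon})^2\,dx)$ which is then integrated, and has the minor advantage of being local in the spatial weight, though for the stated lemma both arrangements deliver the same conclusion.
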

\begin{proof}
The proof is similar to that of \cite[Lemma 3]{T23}, and we include the proof for the reader's sake. By integration by parts,
\begin{align*}
&\frac{d}{dt} \int_{\Omega} \varphi \left(\frac{\varepsilon|\nabla \phi^{\varepsilon}|^{2}}{2}+\frac{W(\phi^{\varepsilon})}{\varepsilon}\right) \mathrm{d}x \\
&=-\int_{\Omega} \varepsilon \varphi(\partial_{t} \phi^{\varepsilon})^{2} \mathrm{d}x + \int_{\Omega} \varphi g^{\varepsilon}\sqrt{2W(\phi^{\varepsilon})} \partial_{t}\phi^{\varepsilon} \mathrm{d}x - \int_{\Omega} \varepsilon(\nabla \varphi \cdot \nabla \phi^{\varepsilon}) \partial_{t}\phi^{\varepsilon} \mathrm{d}x.
\end{align*}
Using the fact that there exists a constant $C_d>0$ such that $\sup_{\{\varphi>0\}}\frac{|\nabla\varphi|^2}{\varphi}\leq2\|\varphi\|_{C^2(\Omega)}$ (which follows from Cauchy's mean value theorem), we have
\begin{align*}
&\left|\frac{d}{dt} \int_{\Omega} \varphi \left(\frac{\varepsilon|\nabla \phi^{\varepsilon}|^{2}}{2}+\frac{W(\phi^{\varepsilon})}{\varepsilon}\right) \mathrm{d}x\right|\\
&\leq\int_{\Omega} \varphi |g^{\varepsilon}|^{2} \frac{2W(\phi^{\varepsilon})}{\varepsilon} \mathrm{d}x+\int_{\Omega} \frac{|\nabla \varphi|^{2}}{\varphi} \varepsilon|\nabla \phi^{\varepsilon}|^{2} \mathrm{d}x+2 \int_{\Omega} \varepsilon \varphi(\partial_{t} \phi^{\varepsilon})^{2} \mathrm{d}x \\
&\leq C \left(1+|\lambda^{\varepsilon}|^{2}+\int_{\Omega} \varepsilon(\partial_{t} \phi^{\varepsilon})^{2} \mathrm{d}x\right).
\end{align*}
The proof follows from Propositions \ref{prop:energy-dissipation}, \ref{prop:L2-estimate}.
\end{proof}

Thanks to Lemma \ref{lem:BV-in-t-mu-ep-t}, we have the following propositions. As the proofs are identical to those of \cite[Propositions 9, 11]{T23}, respectively, we skip the proofs.

\begin{proposition}\label{prop:convergence-measure}
There exist a subsequence of $\varepsilon\to0$, still denoted by $\varepsilon\to0$ by abuse of notations, and a family of Radon measures $\{\mu_t\}_{t\geq0}$ such that
\begin{align*}
\mu_{t}^{\varepsilon} \to \mu_{t} \quad \text {as $\varepsilon\to0$ as Radon measures on } \Omega\text{ for all }t\geq0.
\end{align*}
Also, there exists a countable set $B\subset[0,\infty)$ such that $t\mapsto\mu_t(\Omega)$ is continuous on $[0,\infty)\setminus B$.
\end{proposition}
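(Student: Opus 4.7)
The plan is to apply a Helly--Riesz argument powered by the BV-in-time control from Lemma \ref{lem:BV-in-t-mu-ep-t} and the uniform mass bound from Proposition \ref{prop:energy-dissipation}. Since $\Omega=(\R/\Z)^d$ is compact, we are in the convenient setting where any $\varphi\in C^2(\Omega)$ is an admissible test function and $\chi_\Omega\equiv 1$ itself lies in $C^2(\Omega;\R_{\geq 0})$.

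First, I would fix a countable collection $\{\varphi_k\}_{k\geq 1}\subset C^2(\Omega;\R_{\geq 0})$ that is dense in $C(\Omega;\R_{\geq 0})$ with respect to the sup norm, and that contains $\varphi_1\equiv 1$. For each $k$ and each $T>0$, Lemma \ref{lem:BV-in-t-mu-ep-t} yields a uniform-in-$\varepsilon$ total variation bound
\[
\int_0^T\left|\tfrac{d}{dt}\mu_t^{\varepsilon}(\varphi_k)\right|dt\leq C(\|\varphi_k\|_{C^2(\Omega)},T)\quad\text{for all }\varepsilon\in(0,\varepsilon_1),
\]
while \eqref{eq:estimate-from-energy-dissipation} gives $\sup_{\varepsilon\in(0,1),\,t\geq 0}\mu_t^{\varepsilon}(\varphi_k)\leq C\|\varphi_k\|_{C^0(\Omega)}$. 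Helly's selection theorem then extracts, for each $(k,T)$, a subsequence along which $t\mapsto\mu_t^{\varepsilon}(\varphi_k)$ converges pointwise at every $t\in[0,T]$. A standard diagonal argument over $(k,T)\in\N\times\N$ produces a single subsequence, still denoted $\varepsilon\to 0$, such that $\mu_t^{\varepsilon}(\varphi_k)\to\ell_t(\varphi_k)$ for every $k\geq 1$ and every $t\geq 0$.

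Next, I would upgrade this pointwise-in-$\varphi_k$ convergence to convergence for all $\varphi\in C(\Omega)$ by density. Because $\sup_{\varepsilon\in(0,1),\,t\geq 0}\mu_t^{\varepsilon}(\Omega)\leq C$, the estimate $|\mu_t^{\varepsilon}(\varphi)-\mu_t^{\varepsilon}(\varphi_k)|\leq C\|\varphi-\varphi_k\|_{C^0(\Omega)}$ holds uniformly in $\varepsilon$ and $t$, so $\{\mu_t^{\varepsilon}(\varphi)\}_\varepsilon$ is Cauchy and converges to a limit $\ell_t(\varphi)$ at every $t\geq 0$. The map $\ell_t:C(\Omega)\to\R$ is a positive linear functional of norm $\leq C$, hence by the Riesz representation theorem there exists a Radon measure $\mu_t$ on $\Omega$ with $\ell_t(\varphi)=\int_\Omega\varphi\,d\mu_t$. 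This gives $\mu_t^{\varepsilon}\to\mu_t$ as Radon measures on $\Omega$ for every $t\geq 0$, which is the main assertion.

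Finally, for the exceptional set $B$, I would apply Lemma \ref{lem:BV-in-t-mu-ep-t} with $\varphi\equiv 1$, yielding that $t\mapsto\mu_t^{\varepsilon}(\Omega)$ has total variation on $[0,T]$ bounded uniformly in $\varepsilon\in(0,\varepsilon_1)$. By lower semicontinuity of the total variation under pointwise convergence, $t\mapsto\mu_t(\Omega)$ is of bounded variation on every $[0,T]$; in particular it has at most countably many discontinuity points on $[0,T]$. Setting $B$ equal to the union over $T\in\N$ of these jump sets produces the desired countable set outside of which $t\mapsto\mu_t(\Omega)$ is continuous. No novel estimate is needed beyond Lemma \ref{lem:BV-in-t-mu-ep-t}; the only mild subtlety is to secure \emph{every-$t$} convergence rather than a.e., and this is precisely the strength of Helly's theorem combined with the diagonalization.
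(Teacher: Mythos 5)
Your proposal is correct and follows essentially the same approach as the paper, which defers to \cite[Propositions 9, 11]{T23}: a Helly selection argument driven by the BV-in-time bound of Lemma~\ref{lem:BV-in-t-mu-ep-t} and the uniform mass bound \eqref{eq:estimate-from-energy-dissipation}, diagonalized over a countable dense family and over $T\in\N$, followed by Riesz representation, with countability of the discontinuity set coming from lower semicontinuity of total variation under pointwise convergence. The only cosmetic point is that your dense family lives in $C(\Omega;\R_{\geq 0})$ while you later test against arbitrary $\varphi\in C(\Omega)$; this is repaired immediately by writing $\varphi = (\varphi+\|\varphi\|_{C^0}) - \|\varphi\|_{C^0}$ and using linearity together with your inclusion of $\varphi_1\equiv 1$.
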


\begin{proposition}\label{prop:support-inclusion}
There exists a countable set $\widetilde{B}\subset (0,\infty)$ such that
\begin{align*}
\mathrm{supp}(\mu_t)\subset\{x\in\Omega\ |\ (x,t)\in\mathrm{supp}(\mu)\}
\end{align*}
for $t\in(0,\infty)\setminus\widetilde{B}$.
\end{proposition}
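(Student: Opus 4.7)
The plan is to construct $\widetilde{B}$ as the countable set of discontinuities of $t \mapsto \mu_t(\varphi_k)$ for a suitable dense family $\{\varphi_k\}$ of test functions, and then deduce the support inclusion from continuity in $t$ combined with Fubini. No use of the monotonicity formula or of the vanishing of the discrepancy is needed for this particular step.

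First, I would argue that for every $\varphi \in C_c^2(\Omega;\mathbb{R}_{\geq 0})$ and every $T>0$, the map $t \mapsto \mu_t(\varphi)$ is of bounded variation on $[0,T]$. This follows from Lemma~\ref{lem:BV-in-t-mu-ep-t}, which gives uniform-in-$\varepsilon$ BV bounds on $t \mapsto \mu_t^\varepsilon(\varphi)$, combined with the pointwise convergence $\mu_t^\varepsilon(\varphi) \to \mu_t(\varphi)$ from Proposition~\ref{prop:convergence-measure} and lower semicontinuity of the total variation seminorm under pointwise convergence. I would then fix a countable family $\{\varphi_k\}_{k\geq 1} \subset C_c^2(\Omega;\mathbb{R}_{\geq 0})$ dense in $C_c(\Omega;\mathbb{R}_{\geq 0})$ in the uniform norm, and set $\widetilde{B}$ to be the countable union of the (at most countable) jump sets of the BV functions $t \mapsto \mu_t(\varphi_k)$.

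Next, for $t_0 \in (0,\infty)\setminus \widetilde{B}$, I would upgrade continuity of $t \mapsto \mu_t(\varphi_k)$ at $t_0$ to arbitrary $\varphi \in C_c(\Omega;\mathbb{R}_{\geq 0})$. The estimate \eqref{eq:estimate-from-energy-dissipation} and lower semicontinuity yield the uniform bound $\sup_{t \geq 0} \mu_t(\Omega) \leq C$, so $|\mu_t(\varphi) - \mu_t(\varphi_k)| \leq C \|\varphi - \varphi_k\|_{C^0(\Omega)}$ uniformly in $t$, and density of $\{\varphi_k\}$ closes the argument.

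Finally, I would conclude by the contrapositive. Suppose $t_0 \in (0,\infty)\setminus \widetilde{B}$ and $(x_0,t_0) \notin \mathrm{supp}(\mu)$. Choose $r,\delta > 0$ such that $\mu$ vanishes on $B_r(x_0) \times (t_0-\delta, t_0+\delta)$, and fix $\varphi \in C_c(B_r(x_0);\mathbb{R}_{\geq 0})$ with $\varphi(x_0) > 0$. Since $d\mu = d\mu_t\, dt$, Fubini gives
\begin{equation*}
\int_{t_0-\delta}^{t_0+\delta} \mu_t(\varphi)\, dt \;=\; \int_{B_r(x_0) \times (t_0-\delta, t_0+\delta)} \varphi(x)\, d\mu(x,t) \;=\; 0,
\end{equation*}
so $\mu_t(\varphi) = 0$ for a.e. $t$ in this interval. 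By the previous step, $\mu_{t_0}(\varphi) = 0$, and since $\varphi$ is continuous with $\varphi(x_0)>0$, this forces $\mu_{t_0}(B_{r'}(x_0)) = 0$ for some $r'>0$, i.e.\ $x_0 \notin \mathrm{supp}(\mu_{t_0})$. The only mildly technical point is the BV property of the limit $t \mapsto \mu_t(\varphi)$; everything else is routine bookkeeping, which is consistent with the proof being the analogue of \cite[Proposition~11]{T23} in the present obstacle setting.
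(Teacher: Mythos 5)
Your proposal is correct and matches the paper's approach: the paper defers the proof to [T23, Proposition~11], whose argument is precisely the one you outline — use Lemma~\ref{lem:BV-in-t-mu-ep-t} together with pointwise convergence and lower semicontinuity of total variation to conclude that $t\mapsto\mu_t(\varphi_k)$ is BV for a countable dense family of nonnegative test functions, collect the countable jump sets into $\widetilde B$, upgrade continuity at $t_0\notin\widetilde B$ to arbitrary $\varphi$ via the uniform mass bound from \eqref{eq:estimate-from-energy-dissipation}, and close with Fubini on $d\mu=d\mu_t\,dt$ and the contrapositive. The technical points you flag (BV lower semicontinuity under pointwise convergence, density of $C_c^2$ nonnegative functions) are standard and correctly invoked.
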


\subsection{Vanishing of the discrepancy measure}\label{subsec:vanishing-xi}
We prove that $|\xi|=0$ on $\R^d\times[0,\infty)$ in this subsection. The proofs of the lemmas and the theorem in this subsection follow from \cite[Subsection 4.2]{T23} with adaptations, which we record here for completeness.

\begin{lemma}\label{lem:auxiliary-lemma1}
For any $(x_0,t_0)\in\mathrm{supp}(\mu)$ with $t_0>0$, there exist a sequence $\{(x_j,t_j)\}_{j}$ and a subsequence $\{\varepsilon_j\}_j$ of $\varepsilon\to0$ such that 
\begin{align*}
\lim_{j\to\infty}(x_j,t_j)=(x_0,t_0)\quad\text{and}\quad\left|\phi^{\varepsilon_j}(x_j,t_j)\right|<\sqrt{\frac{2}{3}}\qquad\text{for all }\,j.
\end{align*}
\end{lemma}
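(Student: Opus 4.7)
I would argue by contradiction. Suppose the statement fails. Then there exist $r_0>0$ (small enough that $t_0-r_0^2>0$) and $\varepsilon_\ast>0$ such that $|\phi^\varepsilon(x,t)|\geq\sqrt{2/3}$ for every $(x,t)\in B_{r_0}(x_0,t_0)$ and every $\varepsilon$ in the chosen subsequence with $\varepsilon<\varepsilon_\ast$. By continuity of $\phi^\varepsilon$ and connectedness of $B_{r_0}(x_0,t_0)$, $\phi^\varepsilon$ keeps a fixed sign on this ball, and after passing to a further subsequence I may assume $\phi^\varepsilon\geq\sqrt{2/3}$ there (the case $\phi^\varepsilon\leq-\sqrt{2/3}$ is symmetric). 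The plan is to show that this forces $\mu^\varepsilon$ to vanish on an open neighborhood of $(x_0,t_0)$ as $\varepsilon\to 0$, which by the convergence in Proposition~\ref{prop:convergence-measure} contradicts $(x_0,t_0)\in\mathrm{supp}(\mu)$.

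Let $\eta^\varepsilon:=1-\phi^\varepsilon\in[0,1-\sqrt{2/3}]$. Substituting into \eqref{eq:phi-epsilon} and using $W'(1-\eta)=-2(1-\eta)\eta(2-\eta)$ and $\sqrt{2W(1-\eta)}=\eta(2-\eta)$ gives
\begin{equation*}
\varepsilon\partial_t\eta^\varepsilon-\varepsilon\Delta\eta^\varepsilon+\frac{2(1-\eta^\varepsilon)(2-\eta^\varepsilon)}{\varepsilon}\eta^\varepsilon+g^\varepsilon(2-\eta^\varepsilon)\eta^\varepsilon=0.
\end{equation*}
On the ball, $(1-\eta^\varepsilon)(2-\eta^\varepsilon)\geq\sqrt{2/3}(1+\sqrt{2/3})=:c_0$, and $\|g^\varepsilon\|_{L^\infty}\leq C\varepsilon^{-\alpha/2}$ by Lemma~\ref{lem:L2-g-1-varepilson}. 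Because $\alpha/2<1$, the reaction at rate $\varepsilon^{-2}$ strictly dominates the forcing term of order $\varepsilon^{-1-\alpha/2}$, and I obtain, for $\varepsilon$ small,
\begin{equation*}
\partial_t\eta^\varepsilon-\Delta\eta^\varepsilon+\frac{c_0}{\varepsilon^2}\,\eta^\varepsilon\leq 0\qquad\text{on }Q:=B_r(x_0)\times(t_0-r^2,t_0+r^2),
\end{equation*}
for some $r>0$ with $Q\subset B_{r_0}(x_0,t_0)$.

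I then propose the explicit supersolution
\begin{equation*}
\bar\eta(x,t):=M_0\cdot\frac{r^2}{r^2-|x-x_0|^2}\cdot\exp\!\left(-\frac{c_0(t-t_0+r^2)}{2\varepsilon^2}\right),\qquad M_0:=1-\sqrt{2/3}.
\end{equation*}
The spatial factor $B(x):=r^2/(r^2-|x-x_0|^2)$ satisfies $B\geq 1$, blows up on $\partial B_r(x_0)$, and has $\Delta B/B$ uniformly bounded on $B_{r/2}(x_0)$ by a constant depending only on $r$ and $d$. A direct computation then shows $\bar\eta$ is a supersolution of the parabolic operator above on $B_\rho(x_0)\times(t_0-r^2,t_0+r^2)$ for any $\rho<r$ once $\varepsilon$ is small enough, and clearly $\bar\eta\geq M_0\geq\eta^\varepsilon$ on the parabolic boundary. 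Applying the maximum principle and letting $\rho\uparrow r$ gives $\eta^\varepsilon\leq\bar\eta$ on $Q$, and in particular the pointwise exponential bound $\eta^\varepsilon\leq Ce^{-c_0 r^2/(4\varepsilon^2)}$ on the smaller cylinder $Q':=B_{r/2}(x_0)\times(t_0-r^2/2,t_0+r^2/2)$.

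This immediately yields $W(\phi^\varepsilon)=\tfrac{1}{2}(\eta^\varepsilon(2-\eta^\varepsilon))^2\leq Ce^{-c_0 r^2/(2\varepsilon^2)}$ on $Q'$. For the gradient, I pass to rescaled variables $(\hat x,\hat t)=(x/\varepsilon,t/\varepsilon^2)$: the rescaled $\hat\eta^\varepsilon$ satisfies a linear parabolic inequality with $\varepsilon$-uniformly bounded coefficients (thanks to \eqref{eq:L-infty-g-ep-rescaled} and Lemma~\ref{lem:gradient-estimate}) and is exponentially small on a rescaled cylinder of radius $\sim r/\varepsilon$. Standard interior parabolic estimates then give $|\nabla_{\hat x}\hat\eta^\varepsilon|\leq Ce^{-c'r^2/\varepsilon^2}$ on a slightly smaller rescaled cylinder, so $|\nabla\phi^\varepsilon|\leq C\varepsilon^{-1}e^{-c'r^2/\varepsilon^2}$ on an open set $Q''\subset Q'$ still containing $(x_0,t_0)$. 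Thus the energy density on $Q''$ is exponentially small in $\varepsilon$, whence $\mu^\varepsilon(Q'')\to 0$. Lower semicontinuity of mass on open sets (using $\mu^\varepsilon\to\mu$ as Radon measures on $\Omega\times[0,\infty)$) then yields $\mu(Q'')=0$, contradicting $(x_0,t_0)\in\mathrm{supp}(\mu)\cap Q''$.

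The main obstacle is the barrier construction: the spatial profile $B$ must be large enough near $\partial B_r(x_0)$ to dominate $\eta^\varepsilon\leq M_0$ on the parabolic sides while still giving a supersolution in the interior with an $\varepsilon$-independent spatial coefficient. The whole argument hinges on the strict inequality $\alpha/2<1$ (satisfied by the setup of the paper), which is precisely what lets the $W'$-driven reaction rate $\varepsilon^{-2}$ overwhelm the forcing rate $\varepsilon^{-1-\alpha/2}$ and produces the exponential decay of $\eta^\varepsilon$ on a length scale independent of $\varepsilon$.
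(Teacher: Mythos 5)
Your overall strategy---argue by contradiction, show that if $|\phi^{\varepsilon}|$ stays above $\sqrt{2/3}$ on a fixed parabolic ball then $\phi^{\varepsilon}$ is exponentially close to $\pm1$ there, deduce that the $\mu^{\varepsilon}$-mass of a smaller neighborhood of $(x_0,t_0)$ tends to $0$, and conclude $(x_0,t_0)\notin\mathrm{supp}(\mu)$---is sound in spirit and genuinely different from the paper's proof, which is an integral-estimate argument following \cite[Lemma 6.4]{NT25} (testing the equation against a cutoff to show the local energy of $\phi^{\varepsilon}$ must be small where $\phi^{\varepsilon}$ stays away from $0$). However, there is a concrete gap in your barrier construction that makes the proof, as written, incorrect.

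The multiplicative barrier
\[
\bar\eta(x,t)=M_0\cdot\frac{r^2}{r^2-|x-x_0|^2}\cdot\exp\!\left(-\frac{c_0(t-t_0+r^2)}{2\varepsilon^2}\right)
\]
does \emph{not} dominate $\eta^{\varepsilon}$ on the parabolic boundary of $B_\rho(x_0)\times(t_0-r^2,t_0+r^2)$ for any fixed $\rho<r$. On the lateral boundary $|x-x_0|=\rho$ the spatial factor $B(\rho)=r^2/(r^2-\rho^2)$ is a fixed constant, while the time factor drops to $e^{-c_0 r^2/\varepsilon^2}$ as $t\to t_0+r^2$, so $\bar\eta\ll M_0$ there; the claim ``clearly $\bar\eta\geq M_0\geq\eta^{\varepsilon}$ on the parabolic boundary'' is false. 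You cannot repair this by taking $\rho$ close to $r$ (so that $B(\rho)\geq e^{c_0 r^2/\varepsilon^2}$): a short computation gives
\[
\frac{\Delta B}{B}=\frac{2(r^2+3\rho^2)}{(r^2-\rho^2)^2}+\frac{2(d-1)}{r^2-\rho^2},
\]
and if $r^2-\rho^2\lesssim r^2 e^{-c_0 r^2/\varepsilon^2}$ then $\Delta B/B\gtrsim e^{2c_0 r^2/\varepsilon^2}\gg c_0/(2\varepsilon^2)$, so $\bar\eta$ is not a supersolution near $\partial B_\rho(x_0)$. You cannot simultaneously have the supersolution property and the boundary comparison with this multiplicative ansatz. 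The intended exponential bound is correct, but it requires a different barrier, e.g., an additive one combining a pure time-decay term with a spatially decaying term that equals $1$ on $\partial B_r(x_0)$, such as
\[
\bar\eta(x,t)=M_0\,e^{-\tfrac{c_0}{2\varepsilon^2}(t-t_0+r^2)}+M_0\sum_{i=1}^d\frac{\cosh\!\big(\gamma(x_i-x_{0,i})/\varepsilon\big)}{\cosh\!\big(\gamma r/(\varepsilon\sqrt d)\big)},\qquad \gamma<\sqrt{c_0},
\]
which is a genuine supersolution of $\partial_t-\Delta+\tfrac{c_0}{\varepsilon^2}$ on all of $B_r(x_0)\times(t_0-r^2,t_0+r^2)$ and is $\geq M_0$ on the full parabolic boundary (since $\max_i|x_i-x_{0,i}|\geq r/\sqrt d$ on $\partial B_r(x_0)$). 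With this in place, the exponential $L^{\infty}$-bound for $\eta^{\varepsilon}$ and your subsequent interior parabolic estimate argument (for the gradient) go through.

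Finally, note that the paper's route avoids pointwise comparison entirely: it tests the PDE against a cutoff and obtains an $\varepsilon$-dependent $L^2$-type bound on $\varepsilon|\nabla\phi^{\varepsilon}|^2+(W'(\phi^{\varepsilon}))^2/\varepsilon$ near $(x_0,t_0)$, which directly forces $|\phi^{\varepsilon}|<\sqrt{2/3}$ somewhere nearby; the price it pays is a more opaque integral manipulation, while your approach, once the barrier is fixed, yields a stronger pointwise exponential statement.
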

\begin{proof}
The proof follows from arguments of the proof of \cite[Lemma 6.4]{NT25}. The only replacement that is necessary when following the proof of \cite[Lemma 6.4]{NT25} is
\begin{align*}
&\int_{\Omega} \phi^{2}\left(2\varepsilon_j|\nabla \phi^{\varepsilon_j}|^{2}+\frac{\left(W^{\prime}(\phi^{\varepsilon_j})\right)^{2}}{2 \varepsilon_j}\right) d x \\
&\leq C\left( \varepsilon_j^{3} \int_{\Omega}|\nabla \phi|^{2}|\nabla \phi^{\varepsilon_j}|^{2} d x+ \varepsilon_j^{1-\alpha}  \int_{\Omega} \phi^{2} d x\right)-\frac{d}{d t} \int_{\Omega} \varepsilon_j \phi^{2} W(\phi^{\varepsilon_j}) d x,
\end{align*}
where $C>0$ is a constant larger than the one in \eqref{eq:estimate-multiplier}. The rest of the proof of \cite[Lemma 6.4]{NT25} follows with this adaptation.
\end{proof}

\begin{lemma}\label{lem:auxiliary-lemma2}
There exist $\gamma,\eta_1,\eta_2\in(0,1)$ depending only on $T>0$ such that the following holds: If $t,s\leq\left[0,\frac{T}{2}\right)$ with $0< s-t<\eta_1$ and $x\in\Omega$ satisfy
\begin{align*}
\int_{\mathbb{R}^{d}} \rho_{\left(x, t_0\right)}(y, s)\, d \mu_{s}(y)<\eta_{2},
\end{align*}
then $\left(B_{\gamma r}(x) \times\left\{t_0\right\}\right) \cap \operatorname{supp} (\mu)=\emptyset$, where $r:=\sqrt{2(s-t)}$ and $t_0:=s+\frac{1}{2}r^2$.
\end{lemma}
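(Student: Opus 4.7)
The plan is to argue by contradiction, following the strategy of \cite[Proposition 12]{T23}. Suppose there exists $y_0 \in B_{\gamma r}(x)$ with $(y_0, t') \in \mathrm{supp}(\mu)$. The idea is to produce a positive lower bound on $\int \rho_{(x,t_0)}(y,s)\,d\mu_s(y)$ that contradicts the smallness assumption, for suitably chosen parameters $\gamma, \eta_1, \eta_2$.

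First, by Lemma \ref{lem:auxiliary-lemma1}, extract $\varepsilon_j \to 0$ and $(y_j, t_j) \to (y_0, t')$ with $|\phi^{\varepsilon_j}(y_j, t_j)| \leq \sqrt{2/3}$, so that $W(\phi^{\varepsilon_j}(y_j, t_j))$ is uniformly bounded below. The Lipschitz bound on the rescaled $\hat{\phi}^{\varepsilon}$ (Lemma \ref{lem:gradient-estimate}) combined with parabolic regularity for \eqref{eq:phi-epsilon} yields universal constants $c_0, c_1 > 0$ such that $|\phi^{\varepsilon_j}| \leq 1 - c_1$ throughout the parabolic cylinder $Q_j := B_{c_0 \varepsilon_j}(y_j) \times [t_j - c_0 \varepsilon_j^2, t_j]$. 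From this we obtain the local mass lower bound
\[
\mu^{\varepsilon_j}_\tau(B_{c_0 \varepsilon_j}(y_j)) \geq \frac{1}{\sigma} \int_{B_{c_0\varepsilon_j}(y_j)} \frac{W(\phi^{\varepsilon_j})}{\varepsilon_j}\,dy \geq c_2 \varepsilon_j^{d-1}
\]
for all $\tau$ in the relevant time interval.

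Next, I would apply the monotonicity formula \eqref{eq:monotonicity-formula-integrating-factor} to propagate this local mass bound to the Gaussian integral at time $s$. The two error sources in the monotonicity, namely the discrepancy term $\frac{1}{2(t_0-\tau)}\int \rho\, d\xi^\varepsilon_\tau$ and the $(g^\varepsilon)^2$ contribution, are handled respectively by the averaged estimate \eqref{eq:estimate-xi^+} (which uses Propositions \ref{prop:upper-bound-discrepancy} and \ref{prop:upper-bound-density-ratio}) and by the $L^2$-bound from Proposition \ref{prop:L2-estimate}; both vanish as $\varepsilon_j \to 0$. After choosing $\eta_1$ small enough so that $t_0 - \tau_j \in [r^2/2, r^2]$ and using $|y_j - x| \leq 2\gamma r$ for $j$ large, the Gaussian is pointwise bounded below by $c\, r^{-(d-1)} e^{-C\gamma^2}$ on $B_{c_0 \varepsilon_j}(y_j)$. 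Combining this with the local mass bound, propagating via the monotonicity formula, and passing to the limit using Proposition \ref{prop:convergence-measure} produces a lower bound on $\int \rho\, d\mu_s$ contradicting the $\eta_2$-smallness, provided $\gamma$ and $\eta_2$ are chosen sufficiently small depending on $T$.

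The principal obstacle is the control of the discrepancy term in the monotonicity formula: in \cite{T23}, the analogous step relies on the pointwise nonpositivity $\xi^\varepsilon \leq 0$ provided by the maximum principle, but in our obstacle problem the spatially dependent forcing $g^\varepsilon$ breaks this sign. The resolution is precisely the averaged estimate \eqref{eq:estimate-xi^+} afforded by Propositions \ref{prop:upper-bound-discrepancy} and \ref{prop:upper-bound-density-ratio}. A secondary technical point is amplifying the $\varepsilon_j^{d-1}$-scale local mass bound into a limit-surviving quantity of order one, which in the Allen--Cahn framework of \cite{I93,T17} is achieved by exploiting the thickness of the transition layer together with the density-ratio upper bound; this carries over to our setting upon keeping $r$ bounded below when the parameters $\gamma,\eta_1,\eta_2$ are calibrated.
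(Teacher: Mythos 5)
The overall contradiction strategy (extract an $\varepsilon_j \to 0$ and $(x_j, t_j) \to (x_0, t_0)$ via Lemma~\ref{lem:auxiliary-lemma1}, get a local mass lower bound from Lemma~\ref{lem:gradient-estimate}, propagate it to time $s$ with the monotonicity formula, and control the discrepancy and $(g^\varepsilon)^2$ errors via \eqref{eq:estimate-xi^+} and Proposition~\ref{prop:L2-estimate}) matches the paper. However, there is a genuine gap at the step you call a ``secondary technical point,'' and it is not resolved by the mechanism you invoke.

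Your outline pairs the backward heat kernel $\rho_{(x,t_0)}$ with the local mass bound $\mu^{\varepsilon_j}_{t_j}(B_{c_0\varepsilon_j}(y_j)) \gtrsim \varepsilon_j^{d-1}$. At the time where that local mass bound lives (a time $\tau_j \approx t_j \to t_0$), the Gaussian $\rho_{(x,t_0)}(\cdot,\tau_j)$ has length scale $\sqrt{t_0-\tau_j}$, which is \emph{not} tied to $\varepsilon_j$. Your claim ``$t_0 - \tau_j \in [r^2/2, r^2]$'' is incompatible with $\tau_j \to t_0$, so the Gaussian prefactor there is not $r^{-(d-1)}$. If instead one accepts that $\rho_{(x,t_0)}(\cdot,\tau_j)$ has scale $r$, the resulting lower bound is only $\gtrsim (\varepsilon_j/r)^{d-1} e^{-C\gamma^2}$, which vanishes as $\varepsilon_j \to 0$ no matter how $\gamma,\eta_1,\eta_2$ or a lower bound on $r$ are calibrated. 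So ``keeping $r$ bounded below'' does not fix the amplification.

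The paper's resolution is a different and essential trick: it does \emph{not} run the monotonicity formula with $\rho_{(x,t_0)}$, but with the intermediate heat kernel $\rho_{(x_j,\,t_j+\varepsilon_j^2)}$, whose final time is shifted by $\varepsilon_j^2$. At time $t_j$ this kernel has scale exactly $\varepsilon_j$, so $\inf_{B_{\gamma'\varepsilon_j}(x_j)}\rho_{(x_j,t_j+\varepsilon_j^2)}(\cdot,t_j) \gtrsim \varepsilon_j^{-(d-1)}$ precisely cancels the $\varepsilon_j^{d-1}$ from the local mass and produces an $\varepsilon$-independent constant $\eta_3$. One then applies \eqref{eq:monotonicity-formula-integrating-factor} from $t_1 = s$ to $t_2 = t_j$, lets $\varepsilon_j\to 0$ (so $\rho_{(x_j,t_j+\varepsilon_j^2)}(\cdot,s) \to \rho_{(x_0,t_0)}(\cdot,s)$ and $\mu^{\varepsilon_j}_s \to \mu_s$), and finally shifts the center from $x_0$ to $x$ via \cite[Lemma 13]{T23}, which is where the density-ratio bound (Proposition~\ref{prop:upper-bound-density-ratio}) is actually used. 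Your appeal to ``thickness of the transition layer'' gestures at the right phenomenon, but without the $\varepsilon_j^2$-offset heat kernel the argument does not close; this is the missing idea.
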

\begin{proof}
Suppose for the contradiction that there exists $x_0\in B_{\gamma r}(x)$ such that $(x_0,t_0)\in\mathrm{supp}(\mu)$. Find a sequence $\{(x_j,t_j)\}_{j}$ and a subsequence $\{\varepsilon_j\}_j$ of $\varepsilon\to0$ as in Lemma \ref{lem:auxiliary-lemma1}.

For $y\in B_{\gamma'\varepsilon_j}(x_j)$ with $\gamma'>0$ sufficiently small, we have $W(\phi^{\varepsilon_j}(y,t_j))\geq W\left(\alpha\right)$ where $\alpha_0:=\frac{1}{2}\left(1+\sqrt{\frac{2}{3}}\right)$ since, thanks to Lemma \ref{lem:gradient-estimate}, it holds that
\begin{align*}
|\phi^{\varepsilon_j}(y,t_j) - \phi^{\varepsilon_j}(x_j,t_j)| \leq C\gamma'<\frac{1}{2}\left(1-\sqrt{\frac{2}{3}}\right)\qquad\text{for sufficiently small }\gamma'>0.
\end{align*}
Using the fact that $\inf_{y \in B_{\gamma_2 \varepsilon_j}(x_j)} \rho_{x_j, t_j + \varepsilon_j^2}(y, t_j) > c\varepsilon_j^{1-d}$ for a small constant $c>0$, we see that there exists a constant $\eta_3>0$ such that
\begin{align*}
\eta_3 \leq \int_{B_{\gamma_2 \varepsilon_j}(x_j)} \frac{W\left(\alpha_0\right)}{\varepsilon_j} \rho_{(x_j, t_j + \varepsilon_j^2)}(y, t_j) \, dy \leq \int_{B_{\gamma_2 \varepsilon_j}(x_j)} \frac{W(\phi^{\varepsilon_j}(y, t_j))}{\varepsilon_j} \rho_{(x_j, t_j + \varepsilon_j^2)}(y, t_j) \, dy.
\end{align*}
By Proposition \ref{prop:monotonicity-formula},
\begin{align*}
\eta_{3} &\leq \int_{\mathbb{R}^{d}} \rho_{\left(x_{j}, t_{j}+\varepsilon_j^{2}\right)}\left(y, t_{j}\right) d \mu_{t_{j}}^{\varepsilon_j}(y)\\
&\leq C_T \left(\int_{\mathbb{R}^{d}} \rho_{\left(x_{j}, t_{j}+\varepsilon_j^{2}\right)}(y, s) d \mu_{s}^{\varepsilon_j}(y)+\int_s^{t_j}\int_{\mathbb{R}^d}\frac{\rho_{(x_j, t_j+\varepsilon_j^2)}(y, \tau)}{2(t_j+\varepsilon_j^2-\tau)}d\xi_{\tau}^{\varepsilon_j}(y)d\tau\right).
\end{align*}
Therefore, by Corollary \ref{cor:cor4.1-TT16-restate}(ii) with $(s,t_j,t_j+\varepsilon_j^2)$ in place of $(t_1,t_2,s)$, and by $0<s-t<\eta_1$, we recover
\begin{align*}
3 \eta_{2} \leq \int_{\mathbb{R}^{d}} \rho_{\left(x_{j}, t_{j}+\varepsilon_j^{2}\right)}(y, s) d \mu_{s}^{\varepsilon_j}(y) c + c'\varepsilon_j^{\lambda'-\lambda}(1+|\log(\varepsilon_j)|+(\log (t_j+\varepsilon_j^2))_+) + C\sqrt{\eta_1+|t_j-t_0|}
\end{align*}
for some $\eta_2=\eta_2(\eta_3,T)\in(0,1)$. Here, $C>0$ is a constant that depends only on the data and $c'=c'(T)>0$ is as in Corollary \ref{cor:cor4.1-TT16-restate}. Sending $j\to\infty$ gives
\begin{align*}
3 \eta_{2} \leq \int_{\mathbb{R}^{d}} \rho_{\left(x_{0}, t_{0}\right)}(y, s) d \mu_{s}(y)+C\sqrt{\eta_1}
\end{align*}
Thanks to \cite[Lemma 13]{T23} and taking $\eta_1=C^{-2}\eta_2^2$ that depends only $T$, we see that there exists $\gamma=\gamma(\eta_2,D_T)\in(0,1)$ such that
\begin{align*}
\eta_{2} \leq \int_{\mathbb{R}^{d}} \rho_{\left(x, t_{0}\right)}(y, s) d \mu_{s}(y)\qquad\text{for }x\in B_{\gamma r}(x_0).
\end{align*}
This contradicts the hypothesis of the lemma.
\end{proof}

As the following two lemmas are proved with the identical arguments of the proofs of \cite[Lemmas 7, 8]{T23}, we state them without proofs.

\begin{lemma}\label{lem:auxiliary-lemma3}
There exists a constant $C'_T>0$ depending on $T>0$ such that
\begin{align*}
\mathcal{H}^{d-1}(\operatorname{supp} \mu_{t} \cap U) \leq C'_T \liminf _{r \to 0} \mu_{t-r^{2}}(U)
\end{align*}
for any $t\in(0,T)\subset\widetilde{B}$ and any open set $U\subset\Omega$, where $\widetilde{B}$ is the countable subset of $(0,\infty)$ as in Proposition \ref{prop:support-inclusion}.
\end{lemma}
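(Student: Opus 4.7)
My plan is to adapt the strategy of \cite[Lemma 7]{T23} by combining the lower density bound furnished by Lemma \ref{lem:auxiliary-lemma2} with the uniform density-ratio control of Proposition \ref{prop:upper-bound-density-ratio}, and then running a Vitali covering argument. Fix $t \in (0,T)\setminus\widetilde{B}$ and pick $x_0 \in \mathrm{supp}(\mu_t)$; Proposition \ref{prop:support-inclusion} guarantees $(x_0,t) \in \mathrm{supp}(\mu)$. For small $r>0$ I set $s := t - \tfrac{r^2}{2}$ and $\tau_1 := t - r^2$ so that Lemma \ref{lem:auxiliary-lemma2} applies to the triple $(\tau_1, s, t)$ (in its notation $(t,s,t_0)$), after enlarging $T$ in that lemma to $2T$ to accommodate $t \in (0,T)$. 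Its contrapositive, with center $x_0$ itself, then yields
\begin{align*}
\int_{\mathbb{R}^d} \rho_{(x_0, t)}(y, s)\, d\mu_s(y) \geq \eta_2
\end{align*}
for all sufficiently small $r$.

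Next I rewrite $\rho_{(x_0,t)}(y,s) = (2\pi r^2)^{-(d-1)/2} e^{-|x_0-y|^2/(2r^2)}$ and split the integration at $B_{R r}(x_0)$ for a large constant $R$ to be chosen. Using Proposition \ref{prop:upper-bound-density-ratio}, a standard layer-cake computation gives
\begin{align*}
\int_{\Omega \setminus B_{Rr}(x_0)} e^{-|x_0-y|^2/(2r^2)}\, d\mu_s(y) \leq C\, D_T\, r^{d-1} \int_R^\infty \tau^{d-1} e^{-\tau^2/2}\, d\tau,
\end{align*}
which can be made smaller than $\tfrac{1}{2}\eta_2 (2\pi r^2)^{(d-1)/2}$ by taking $R = R(D_T,\eta_2)$ large enough. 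Rearranging leaves the ball bound
\begin{align*}
\mu_s(B_{R r}(x_0)) \geq c_0\, r^{d-1}
\end{align*}
with a constant $c_0 = c_0(T) > 0$ independent of $x_0$ and $r$.

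Finally, I fix a compact $U' \subset\subset U$ and choose $r$ so small that $B_{5Rr}(x) \subset U$ for every $x \in \overline{U'}$. The Vitali covering lemma applied to $\{B_{Rr}(x)\}_{x \in \mathrm{supp}(\mu_t) \cap U'}$ extracts a disjoint subfamily $\{B_{Rr}(x_i)\}_i$ whose five-fold enlargements still cover $\mathrm{supp}(\mu_t) \cap U'$. Summing the ball estimate from the previous step over the disjoint family gives
\begin{align*}
\mathcal{H}_{10Rr}^{d-1}\bigl(\mathrm{supp}(\mu_t) \cap U'\bigr)
\leq \omega_{d-1} \sum_i (5Rr)^{d-1}
\leq \frac{\omega_{d-1}(5R)^{d-1}}{c_0} \sum_i \mu_s(B_{Rr}(x_i))
\leq \frac{\omega_{d-1}(5R)^{d-1}}{c_0}\, \mu_s(U).
\end{align*}
Taking $\liminf_{r \to 0}$ (which, up to the harmless relabeling $r \leftrightarrow r/\sqrt{2}$, produces $\liminf_{r \to 0} \mu_{t-r^2}(U)$) and then exhausting $U$ by compact subsets $U' \nearrow U$ yields the asserted inequality with $C_T := \omega_{d-1}(5R)^{d-1}/c_0$. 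The main obstacle is the quantitative Gaussian-tail estimate in the second paragraph, which is where Proposition \ref{prop:upper-bound-density-ratio} is indispensable; once that step is in place, the Vitali and liminf passages are classical.
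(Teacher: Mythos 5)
Your proof is correct and reconstructs essentially the argument that the paper defers to (the paper cites \cite[Lemma 7]{T23} and omits the proof as identical): the contrapositive of the clearing-out Lemma~\ref{lem:auxiliary-lemma2} applied at $(x_0,t)\in\mathrm{supp}(\mu)$ gives the lower Gaussian-density bound $\eta_2$, the density-ratio upper bound of Proposition~\ref{prop:upper-bound-density-ratio} kills the Gaussian tail outside $B_{Rr}(x_0)$, these combine to yield $\mu_s(B_{Rr}(x_0))\geq c_0 r^{d-1}$, and a Vitali covering of $\mathrm{supp}(\mu_t)\cap U'$ by disjointified balls of radius $Rr$ then produces the pre-Hausdorff measure bound, from which $r\to 0$ and $U'\nearrow U$ give the claim. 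All the bookkeeping checks out: with $s=t-r^2/2$ and $\tau_1=t-r^2$ the lemma's $r_{\text{lemma}}=\sqrt{2(s-\tau_1)}=r$ and $t_0=s+\tfrac{1}{2}r^2=t$, the rescaling $r\leftrightarrow r/\sqrt 2$ is indeed harmless for the $\liminf$, and passing the density-ratio bound from $\mu^\varepsilon_t$ to the limit $\mu_t$ is standard from Radon-measure convergence (a point worth stating explicitly, but not a gap).
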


\begin{lemma}\label{lem:auxiliary-lemma4}
Let $T\geq1$ and let $\eta_2\in(0,1)$ be a number as in Lemma \ref{lem:auxiliary-lemma2}. Let
\begin{align*}
Z_{T}:=\left\{(x, t) \in \operatorname{supp} (\mu) \mid 0 \leq t \leq T / 2,\, \limsup _{s \to t^+} \int_{\mathbb{R}^{d}} \rho_{(y, s)}(x, t) d \mu_{s}(y) \leq \eta_{2} / 2\right\}.
\end{align*}
Then, it holds that $\mu(Z_T)=0$.
\end{lemma}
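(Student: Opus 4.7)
The plan is to adapt the Vitali-type covering argument of Ilmanen \cite{I93} and Takasao \cite[Lemma 8]{T23} to the present forced and volume-constrained setting, using Lemmas \ref{lem:auxiliary-lemma2} and \ref{lem:auxiliary-lemma3} as the local building blocks.

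Fix an arbitrary $(x_0,t_0)\in Z_T$. From the $\limsup$ condition defining $Z_T$, extract a sequence $s_n\downarrow t_0$ with $s_n-t_0<\eta_1$ along which $\int_{\mathbb{R}^d}\rho_{(y,s_n)}(x_0,t_0)\,d\mu_{s_n}(y)<\eta_2$. Using the heat-kernel symmetry $\rho_{(y,s_n)}(x_0,t_0)=\rho_{(x_0,\tau_n)}(y,s_n)$, where $r_n:=\sqrt{2(s_n-t_0)}$ and $\tau_n:=s_n+\tfrac{1}{2}r_n^2=2s_n-t_0$, the hypothesis of Lemma \ref{lem:auxiliary-lemma2} is met and yields $(B_{\gamma r_n}(x_0)\times\{\tau_n\})\cap\mathrm{supp}(\mu)=\emptyset$. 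Restricting to $\tau_n\notin\widetilde{B}$ via Proposition \ref{prop:support-inclusion}, one has $\mathrm{supp}(\mu_{\tau_n})\cap B_{\gamma r_n}(x_0)=\emptyset$ along a shrinking parabolic family of clean slices above $(x_0,t_0)$.

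To bound $\mu(Z_T)$, I would next propagate this one-time cleanness into a parabolic neighborhood. Combining the monotonicity formula \eqref{eq:monotonicity-formula-integrating-factor} with the density-ratio bound of Proposition \ref{prop:upper-bound-density-ratio} and the $\xi^{+,\varepsilon}$-estimate \eqref{eq:estimate-xi^+} (passed to the limit), one transports the vanishing $\mu_{\tau_n}(B_{\gamma r_n}(x_0))=0$ backward in time to obtain $\mu_{t_0}(B_{\gamma r_n/2}(x_0))=o(r_n^{d-1})$ as $n\to\infty$. This forces the upper $(d-1)$-dimensional density of $\mu_{t_0}$ at $x_0$ to vanish, so that $x_0\notin\mathrm{supp}(\mu_{t_0})$ whenever $t_0\notin\widetilde{B}$. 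A Vitali-type covering of the slice $\{x:(x,t_0)\in Z_T\}$ by these balls, together with Lemma \ref{lem:auxiliary-lemma3}, gives $\mu_{t_0}(\{x:(x,t_0)\in Z_T\})=0$ for a.e. $t_0\in[0,T/2]$; Fubini then yields $\mu(Z_T)=\int_0^{T/2}\mu_t(\{x:(x,t)\in Z_T\})\,dt=0$.

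The chief obstacle is precisely this time-slice to space-time conversion: Lemma \ref{lem:auxiliary-lemma2} gives cleanness of $\mathrm{supp}(\mu)$ only on the isolated instant $\tau_n$, whereas $\mu=\mu_t\,dt$ lives on space-time. The resolution uses \eqref{eq:monotonicity-formula-integrating-factor} to spread the density information over a short time interval, with Proposition \ref{prop:L2-estimate} controlling the multiplier-dependent exponential factor and \eqref{eq:estimate-xi^+} together with Proposition \ref{prop:upper-bound-density-ratio} absorbing the positive part of the discrepancy. The obstacle-induced spatial dependence of $g^{\varepsilon}$ affects only these auxiliary estimates, which are already in place in Sections \ref{sec:L2-estimate} and \ref{subsec:monotonicity-formula-upper-bound-xi}, so no further ingredients are needed beyond those listed at the top of Subsection \ref{subsec:vanishing-xi}.
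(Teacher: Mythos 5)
The central step of your argument, the ``backward transport'' in the third paragraph, is not justified and in fact goes against the direction of the available estimates. You start from $\operatorname{supp}(\mu_{\tau_n})\cap B_{\gamma r_n}(x_0)=\emptyset$ at the \emph{later} time $\tau_n=t_0+r_n^2$ and claim to deduce $\mu_{t_0}(B_{\gamma r_n/2}(x_0))=o(r_n^{d-1})$ at the \emph{earlier} time $t_0$. But the only propagation tool available, the Huisken--Ilmanen type monotonicity in Proposition~\ref{prop:monotonicity-formula} and \eqref{eq:monotonicity-formula-integrating-factor}, bounds $\int\rho\,d\mu^{\varepsilon}_{t_2}$ in terms of $\int\rho\,d\mu^{\varepsilon}_{t_1}$ for $t_1\leq t_2$; that is, densities at later times are controlled by densities at earlier times, not the reverse. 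Nothing in Sections~\ref{sec:L2-estimate} or~\ref{subsec:monotonicity-formula-upper-bound-xi} lets you run time backward, and this is not a cosmetic issue: under mean curvature flow mass can disappear in finite time (a shrinking sphere), so a clean ball at a later instant gives no control at all on the mass slightly earlier. Once the backward step fails, the claim that the upper $(d-1)$-density of $\mu_{t_0}$ at $x_0$ vanishes, and hence $x_0\notin\operatorname{supp}(\mu_{t_0})$, does not follow, and the remainder of the argument collapses.

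Two subsidiary points. First, if you really had established $x_0\notin\operatorname{supp}(\mu_{t_0})$ for every $x_0$ in the time-$t_0$ slice of $Z_T$, then that slice would already be contained in the complement of $\operatorname{supp}(\mu_{t_0})$ and would be $\mu_{t_0}$-null trivially; the appeal to Vitali covering and Lemma~\ref{lem:auxiliary-lemma3} in the same sentence is therefore redundant in your scheme. Second, Lemma~\ref{lem:auxiliary-lemma3} controls $\mathcal{H}^{d-1}(\operatorname{supp}\mu_t\cap U)$ from above by $\liminf_{r\to0}\mu_{t-r^2}(U)$, i.e., it bounds Hausdorff measure of the support at time $t$ by the mass at slightly earlier times; it does not directly bound $\mu_t$ itself, and converting an $\mathcal{H}^{d-1}$-nullity into a $\mu_t$-nullity would presuppose the kind of absolute continuity ($\mu_t\ll\mathcal{H}^{d-1}$) that is only available after Theorem~\ref{thm:integrality}, which this lemma precedes. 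What you correctly set up in the first two paragraphs — using the heat-kernel symmetry and Proposition~\ref{prop:support-inclusion} to turn the $\limsup$ condition into a family of clean slices $B_{\gamma r}(x_0)\times\{t_0+r^2\}$ for all small $r$, not just a sequence — is the right starting point, but the mechanism by which Takasao (and hence this paper) converts that forward-in-time information into $\mu(Z_T)=0$ is a direct space-time covering argument rather than a backward-in-time propagation followed by Fubini.
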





\begin{theorem}\label{thm:vanishing-xi}
It holds that $|\xi|=0$ on $\R^d\times(0,\infty)$.
\end{theorem}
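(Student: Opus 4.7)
The plan is to adapt the framework of \cite[proof of Theorem 1]{T23} to our setting, with the key modifications coming from Propositions \ref{prop:upper-bound-discrepancy}, \ref{prop:upper-bound-density-ratio}, and the estimate \eqref{eq:estimate-xi^+}. Observe first that, since $\left|\frac{\varepsilon|\nabla\phi^{\varepsilon}|^2}{2}-\frac{W(\phi^{\varepsilon})}{\varepsilon}\right|\leq\frac{\varepsilon|\nabla\phi^{\varepsilon}|^2}{2}+\frac{W(\phi^{\varepsilon})}{\varepsilon}$, we have $|\xi^{\varepsilon}|\leq\sigma\mu^{\varepsilon}$ as Radon measures on $\Omega\times(0,\infty)$. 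Passing to a further subsequence, we may assume $(\xi^{\varepsilon})^+$ and $(\xi^{\varepsilon})^-$ converge weakly to Radon measures $\xi^+$ and $\xi^-$, respectively, with $\xi=\xi^+-\xi^-$ and $|\xi|\leq\sigma\mu$. In particular, $|\xi|$ is concentrated on $\mathrm{supp}(\mu)$, and it suffices to show $\xi^+=\xi^-=0$.

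For the positive part, the estimate \eqref{eq:estimate-xi^+} gives, for any $(y,s)$ with $s>0$ and all sufficiently small $\varepsilon$,
\[
\int_0^{\widetilde{t}}\int_{\mathbb{R}^d} \frac{\rho_{(y,s)}(x,t)}{2(s-t)}\,d(\xi_t^{\varepsilon})^+\,dt\leq c''\varepsilon^{\lambda'-\lambda}\bigl(1+|\log\varepsilon|+(\log s)^+\bigr)\xrightarrow[\varepsilon\to0]{}0,
\]
with $\lambda'>\lambda$. By a standard covering argument (see \cite[Section 4]{T23}) — approximating an arbitrary $\zeta\in C_c\bigl(\Omega\times(0,\infty);\mathbb{R}_{\geq0}\bigr)$ by finite sums of backward heat kernels $\rho_{(y_i,s_i)}$ restricted to thin slabs — one concludes $\xi^+(\zeta)=0$, hence $\xi^+\equiv0$.

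For the negative part, I would argue $\mu$-a.e. pointwise. By Lemma \ref{lem:auxiliary-lemma4}, the set $Z_T$ satisfies $\mu(Z_T)=0$ for every $T\geq1$; so for $\mu$-a.e.\ $(x_0,t_0)\in\mathrm{supp}(\mu)$ there is a sequence $s_j\downarrow t_0$ with $\int_{\mathbb{R}^d}\rho_{(x_0,s_j)}(y,t_0)\,d\mu_{t_0}(y)>\eta_2/2$. Applying the monotonicity formula \eqref{eq:monotonicity-formula-integrating-factor} between $t_0$ and $s_j$, the exponential factor is uniformly controlled by $C_T$ thanks to Proposition \ref{prop:L2-estimate}; the forcing-generated positive-part contribution vanishes by Step 2 above; and the density lower bound at $(x_0,t_0)$ propagates to give a uniform lower bound on the energy density of $\mu^{\varepsilon}$ in a space-time neighborhood of $(x_0,t_0)$. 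On such neighborhoods, the Modica-Mortola-type inequality $|\nabla k(\phi^{\varepsilon})|\leq\frac{\varepsilon|\nabla\phi^{\varepsilon}|^2}{2}+\frac{W(\phi^{\varepsilon})}{\varepsilon}$ combined with the density lower bound forces the equipartition $\frac{\varepsilon|\nabla\phi^{\varepsilon}|^2}{2}\approx\frac{W(\phi^{\varepsilon})}{\varepsilon}$ in the limit, so $\xi^-$ vanishes near $(x_0,t_0)$. Since this holds at $\mu$-a.e.\ $(x_0,t_0)\in\mathrm{supp}(\mu)$ and $\xi^-$ is concentrated on $\mathrm{supp}(\mu)$ with $\xi^-\leq\sigma\mu$, we conclude $\xi^-\equiv0$.

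The main obstacle — absent in the non-obstacle setting of \cite{T23} where $\xi^{\varepsilon}\leq0$ via the maximum principle — is that our spatially dependent forcing prevents direct nonpositivity of $\xi^{\varepsilon}$. The resolution, already carried out in Subsections \ref{subsec:monotonicity-formula-upper-bound-xi} and \ref{subsec:upper-bound-density-ratio}, is the combination of the pointwise upper bound $\xi^{\varepsilon}\leq C\varepsilon^{-\frac{1}{2}(1+\alpha/2)}$ and the uniform density ratio bound $D^{\varepsilon}(t)\leq D_T$, which together yield the decisive estimate \eqref{eq:estimate-xi^+}. Once $\xi^+=0$ is in hand, the rest of the argument follows the standard Ilmanen--Takasao template as outlined above.
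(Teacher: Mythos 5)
Your decomposition into $\xi^{+}$ and $\xi^{-}$ and the disposal of the positive part via \eqref{eq:estimate-xi^+} is sound: once Proposition \ref{prop:upper-bound-density-ratio} is in hand, the estimate \eqref{eq:estimate-xi^+} holds unconditionally, the right-hand side vanishes as $\varepsilon\to0$, and testing against the kernel at varying $(y,s)$ (which is bounded below on compact sets away from $t=s$) kills $\xi^{+}$. This matches the spirit of the paper, which uses the same positive-part estimate to rewrite the $|\xi^{\varepsilon}|$ contribution, although the paper does not make the $\xi^{\pm}$ split explicit.

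The treatment of $\xi^{-}$, however, has a genuine gap. The crucial intermediate object in the paper's argument is the finiteness
\begin{align*}
\int_{\Omega\times(t,T)}\frac{\rho_{(y,s)}(x,t)}{s-t}\,d\mu_s(y)\,ds<\infty\quad\text{for }|\xi|\text{-a.e. }(x,t)\in\Omega\times(0,T),
\end{align*}
which is obtained by applying \eqref{eq:monotonicity-formula-integrating-factor}, absorbing the positive part via \eqref{eq:estimate-xi^+}, letting $t\to s^{-}$ and $\varepsilon\to0$, and integrating over $d\mu_s\,ds$ (Fubini). The conclusion then comes from the tension between this finiteness and the lower density bound: Lemma \ref{lem:auxiliary-lemma2} (clearing-out) / Lemma \ref{lem:auxiliary-lemma4} give, for points of $\mathrm{supp}(\mu)$, a lower bound $\int\rho\,d\mu_s\gtrsim\eta_2$ for all $s$ near $t$, which makes $\int\frac{\rho}{s-t}d\mu_s\,ds$ diverge; hence $|\xi|$-a.e.\ $(x,t)$ must lie outside $\mathrm{supp}(\mu)$, and since $|\xi|\leq\sigma\mu$ is carried on $\mathrm{supp}(\mu)$, $|\xi|=0$. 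Your sketch never derives the finiteness statement at all, and the claim that ``the density lower bound propagates to give a uniform lower bound on the energy density \dots which together with the Modica--Mortola inequality forces equipartition'' is not a valid deduction: a density lower bound on $\mu^{\varepsilon}$ does not by itself imply equipartition of the two energy terms, and ``equipartition'' is precisely the statement $|\xi^{\varepsilon}|\to0$ that you are trying to prove, so the reasoning is circular as written. You also have a minor slip: the lower density bound coming from $\mu(Z_T)=0$ should read $\int_{\mathbb{R}^d}\rho_{(x_0,s_j)}(y,t_0)\,d\mu_{s_j}(y)>\eta_2/2$, with $d\mu_{s_j}$, not $d\mu_{t_0}$. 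To fix the argument you need to actually run the Ilmanen--Takasao finiteness/contradiction chain (as in \cite[Theorem 13]{T23} with the modifications from \eqref{eq:monotonicity-formula-integrating-factor} and \eqref{eq:estimate-xi^+}), rather than appealing to equipartition heuristics.
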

\begin{proof}
Fix $T>0$ and find $C_T,D_T>0$ as in Propositions \ref{prop:monotonicity-formula}, \ref{prop:upper-bound-density-ratio}, respectively (as well as $\min\{\varepsilon_1,\varepsilon_2(T)\}\in(0,1)$). Due to \eqref{eq:monotonicity-formula-integrating-factor}, we can write by using the fact that $|\xi^{\varepsilon}_t|+\xi^{\varepsilon}_t\leq2(\xi^{\varepsilon}_t)_+$, for $0<t<s<T$,
\begin{align*}
\int_{0}^{t}\int_{\mathbb{R}^d}\frac{\rho_{(y, s)}(x, t)}{2(s-t)}d\left|\xi_{t}^{\varepsilon}\right|\,dt\leq \int_{\mathbb{R}^{d}} \rho_{(y, s)}(x, t) d \mu_{t}^{\varepsilon}(x)\bigg|_{t=0}+2\int_{0}^{t}\int_{\mathbb{R}^d}\frac{\rho_{(y, s)}(x, t)}{2(s-t)}d\left(\xi_{t}^{\varepsilon}\right)_+\,dt.
\end{align*}
Letting $t\to s^-$, $\varepsilon\to0$, and integrating over $d\mu_sds$ over $\R^d\times(0,T)$ in order, we obtain, by Lemma \ref{lem:lemma4.7-TT16},
\begin{align*}
\int_{\R^d\times(0,T)}  \int_{\R^d \times(0, s)}\frac{\rho_{(y, s)}(x, t)}{2(s-t)} \,d|\xi|(x, t)\,d\mu_{s}(y)ds<\infty.
\end{align*}
Thus, by Fubini's theorem,
\begin{align}\label{eq:after-fubini}
\int_{\R^d \times(t, T)} \frac{\rho_{(y, s)}(x, t)}{s-t} d \mu_{s}(y) d s<\infty \quad \text { for }|\xi|\text{-a.e. }(x, t) \in \R^d \times(0, T).
\end{align}

\medskip

We now claim that
\begin{align}\label{eq:a-is-zero}
a(x, t) := \limsup_{s \to t^+} \int_{\R^d} \rho_{(y,s)}(x, t) \, d\mu_s(y) = 0 \qquad \text{for } |\xi|\text{-a.e. } (x, t) \in \R^d \times (0, T).
\end{align}
Put $\beta:=\log (s-t)$ and
\begin{align*}
h(s) := \int_{\R^d} \rho_{(y,s)}(x, t) \, d\mu_s(y).
\end{align*}
For $(x, t) \in \R^d \times(0, T)$ satisfying \eqref{eq:after-fubini}, we have
\begin{align*}
\int_{-\infty}^{\log(T-t)} h(t + e^{\beta}) \, d\beta < \infty,
\end{align*}
and therefore, for any $\theta\in(0,1)$, we can find a sequence $\{\beta_i\}_{i=1}^{\infty}$ such that $\beta_i \to -\infty\text{ as }i\to\infty$, $0 < \beta_i - \beta_{i+1} \leq \theta$, and $h(t + e^{\beta_i}) \leq \theta$. For $\beta\in(-\infty,\beta_1)$, choose $i$ such that $\beta\in[\beta_i,\beta_{i-1})$. We note that for $M>0$ and $r:=\sqrt{2(2e^{\beta}-e^{\beta_i})}$, it holds that
\begin{align}\label{eq:on-Mr}
\sup_{y \in B_{Mr}(x)} \frac{\rho_{(y, t+2e^{\beta}-e^{\beta_i})}(x, t)}{\rho_{(y, t+e^{\beta_i})}(x, t)} \leq \left(\frac{e^{\beta_i}}{2e^{\beta}-e^{\beta_i}}\right)^{\frac{d-1}{2}}e^{M^2(e^{\beta-\beta_i}-1)} \leq e^{M^2(e^{\theta}-1)}.
\end{align}
Applying \eqref{eq:monotonicity-formula} and Corollary \ref{cor:cor4.1-TT16-restate}(ii), we obtain
\begin{align*}
h(t + e^{\beta}) &= \int_{\R^d} \rho_{(y,t+e^{\beta})}(x, t) \, d\mu_{t+e^{\beta}}(y) = \int_{\R^d} \rho_{(y,t+2e^{\beta})}(x, t + e^{\beta}) \, d\mu_{t+e^{\beta}}(y) \\
&\leq C_T\left( \int_{\R^d} \rho_{(y,t+2e^{\beta})}(x, t + e^{\beta_i}) \, d\mu_{t+e^{\beta_i}}(y)+\int_{t+e^{\beta_i}}^{t+e^{\beta}}\int_{\R^d}\frac{\rho_{(y,t+2e^{\beta})}(x,\tau)}{2(t+2e^{\beta}-\tau)}\,d\xi^{\varepsilon}_{\tau}(x)d\tau\right)\\
&\leq C_T\left(\int_{\R^d} \rho_{(y,t+2e^{\beta})}(x, t + e^{\beta_i}) \, d\mu_{t+e^{\beta_i}}(y)\right.\\
&\qquad\qquad\left.+c'\varepsilon^{\lambda'-\lambda}\left(1+|\log(\varepsilon)|+\log(t+2e^{\beta})_+\right)+\frac{2^{2d+2}\sqrt{\pi}C}{\omega_d}\cdot\frac{e^{\beta}-e^{\beta_i}}{\sqrt{2e^{\beta}-e^{\beta_i}}}\right).
\end{align*}
We further have that by \eqref{eq:on-Mr}, Proposition \ref{prop:upper-bound-density-ratio}, and \cite[Lemma 13]{T23},
\begin{align*}
\int_{\R^d} \rho_{(y,t+2e^{\beta})}(x, t + e^{\beta_i}) \, d\mu_{t+e^{\beta_i}}(y) &= \int_{\R^d} \rho_{(y,t+2e^{\beta}-e^{\beta_i})}(x, t) \, d\mu_{t+e^{\beta_i}}(y)\\
&\leq \int_{B_{Mr}(x)} \rho_{(y,t+2e^{\beta}-e^{\beta_i})}(x, t) \, d\mu_{t+e^{\beta_i}}(y)+2^{d-1}e^{-\frac{3M^2}{8}}D_T\\
&\leq e^{M^2(e^{\theta}-1)}\int_{B_{Mr}(x)} \rho_{(y,t+e^{\beta_i})}(x, t) \, d\mu_{t+e^{\beta_i}}(y)+2^{d-1}e^{-\frac{3M^2}{8}}D_T\\
&\leq e^{M^2(e^{\theta}-1)}\theta+2^{d-1}e^{-\frac{3M^2}{8}}D_T.
\end{align*}
Also, we can check that $\frac{e^{\beta}-e^{\beta_i}}{\sqrt{2e^{\beta}-e^{\beta_i}}}\leq(1+e^{\frac{1}{2}\theta})\sqrt{T-t}\cdot\theta\leq4\sqrt{T}\cdot\theta$. All together, we obtain
\begin{align*}
h(t + e^{\beta}) &\leq C_T\left(e^{M^2(e^{\theta}-1)}\theta+2^{d-1}e^{-\frac{3M^2}{8}}D_T \right.\\
&\left.\qquad\qquad\qquad+ c'\varepsilon^{\lambda'-\lambda}\left(1+|\log(\varepsilon)|+\log(2T)_+\right)+\frac{2^{2d+4}\sqrt{\pi}C\sqrt{T}}{\omega_d}\theta\right).
\end{align*}
Taking the limits $\beta\to-\infty$, $\theta\to0$, $M\to\infty$, and $\varepsilon\to0$ in turn, we obtain $\limsup_{\beta\to-\infty}h(t+e^{\beta})=0$, which proves \eqref{eq:a-is-zero}.

\medskip

Set
\begin{align*}
\begin{cases}
A:=\{(x, t) \in \Omega \times (0, T)\,:\,a(x, t) = 0\}\quad\text{and}\\
B:=\{(x, t) \in \Omega \times (0, T)\,:\,a(x, t) >0\}.
\end{cases}
\end{align*}
Then, $\R^d\times(0,T)=A\cup B$ and $|\xi|(B)=0$ by \eqref{eq:a-is-zero}. Furthermore, Lemma \ref{lem:auxiliary-lemma4} implies that $\mu(A)=0$, which then implies $|\xi|(A)=0$. Since $T>0$ was arbitrary, we conclude that $|\xi|(\R^d\times(0,\infty))=0$.
\end{proof}

\subsection{Rectifiability and integrality}\label{subsec:rectifiability-integrality}

We first state the rectifiability of $\mu_t$ as a consequence of Theorem \ref{thm:vanishing-xi}. Reference \cite[Subsection 4.3]{T23} treats the rectifiability, and we include the details for completeness.

\medskip

Define the varifold $V^{\varepsilon}_t$ by
\begin{align*}
V_t^{\varepsilon}(\phi) := \int_{\Omega \cap \{|\nabla\phi^{\varepsilon}(x,t)| \neq 0\}} \phi \left( x, I - \frac{\nabla\phi^{\varepsilon}(x,t)}{|\nabla\phi^{\varepsilon}(x,t)|} \otimes \frac{\nabla\phi^{\varepsilon}(x,t)}{|\nabla\phi^{\varepsilon}(x,t)|} \right) \mathrm{d}\mu_t^{\varepsilon}(x)
\end{align*}
for $\phi\in C_c(\Omega\times\mathbb{G}(d,d-1))$. By the definition of the first variation of $V_t^{\varepsilon}$, we have
\begin{align*}
\delta V_t^{\varepsilon}(\zeta) &= \int_{\Omega\times\mathbb{G}(d,d-1)} \nabla\zeta(x) \cdot S \, \mathrm{d}V_t^{\varepsilon}(x, S) \\
&= \int_{\Omega\cap\{|\nabla\phi^{\varepsilon}(x,t)| \neq 0\}} \nabla\zeta(x) \cdot \left( I - \frac{\nabla\phi^{\varepsilon}(x,t)}{|\nabla\phi^{\varepsilon}(x,t)|} \otimes \frac{\nabla\phi^{\varepsilon}(x,t)}{|\nabla\phi^{\varepsilon}(x,t)|} \right) \mathrm{d}\mu_t^{\varepsilon}(x)
\end{align*}
for $\zeta\in C_c^1(\Omega;\mathbb{R}^d)$. By integration by parts,
\begin{align*}
\delta V_t^{\varepsilon}(\zeta) &= \frac{1}{\sigma}\int_{\Omega} (\zeta \cdot \nabla \phi^{\varepsilon}) \left( \varepsilon \Delta \phi^{\varepsilon} - \frac{W'(\phi^{\varepsilon})}{\varepsilon} \right) \mathrm{d}x + \frac{1}{\sigma}\int_{\Omega \cap \{|\nabla \phi^{\varepsilon}(x,t)| = 0\}} \mathrm{div} (\zeta)\xi_t^{\varepsilon} \, \mathrm{d}x \\
&\quad + \frac{1}{\sigma}\int_{\Omega \cap \{|\nabla \phi^{\varepsilon}(x,t)| \neq 0\}} \nabla \zeta \cdot \left( \frac{\nabla \phi^{\varepsilon}(x,t)}{|\nabla \phi^{\varepsilon}(x,t)|} \otimes \frac{\nabla \phi^{\varepsilon}(x,t)}{|\nabla \phi^{\varepsilon}(x,t)|} \right) \xi_t^{\varepsilon} \, \mathrm{d}x.
\end{align*}
We note that the second and third terms vanish as $\varepsilon\to0$ for a.e. $t\geq0$ by Theorem \ref{thm:vanishing-xi}. We moreover have, by \eqref{eq:phi-epsilon}, \eqref{eq:estimate-from-energy-dissipation}, and Proposition \ref{prop:L2-estimate}, that there exists a constant $C_T>0$ such that
\begin{align}\label{eq:L2-approximate-mean-curvature}
\sup_{\varepsilon\in(0,1)} \int_{0}^{T} \int_{\Omega} \varepsilon \left( \Delta\phi^{\varepsilon} - \frac{W'(\phi^{\varepsilon})}{\varepsilon^2} \right)^2 \mathrm{d}x \mathrm{d}t \le C_T.
\end{align}
Fatou's lemma implies that for a.e. $t\geq0$, there exists a subsequence of $\varepsilon\to0$, still denoted by $\varepsilon\to0$ (which may depend on the choice $t\geq0$), such that
\begin{align*}
\lim_{\varepsilon\to0}\int_{\Omega} \varepsilon \left( \Delta\phi^{\varepsilon} - \frac{W'(\phi^{\varepsilon})}{\varepsilon^2} \right)^2 \mathrm{d}x < +\infty.
\end{align*}
Therefore, for $\zeta\in C_c^1(\Omega;\mathbb{R}^d)$ with $\|\zeta\|_{L^{\infty}(\Omega)}\leq1$, it holds, by \eqref{eq:estimate-from-energy-dissipation}, that
\begin{align*}
& \liminf_{\varepsilon\to0} |\delta V_t^{\varepsilon}(\zeta)| \le \liminf_{\varepsilon\to0}\frac{1}{\sigma} \left( \int_{\Omega} \varepsilon |\nabla \phi^{\varepsilon}|^2 \, \mathrm{d}x \right)^{\frac{1}{2}} \left( \int_{\Omega} \varepsilon \left( \Delta\phi^{\varepsilon} - \frac{W'(\phi^{\varepsilon})}{\varepsilon^2} \right)^2 \, \mathrm{d}x \right)^{\frac{1}{2}} <+\infty.
\end{align*}
We hence see that for each $t\in[0,\infty)$ as in Proposition \ref{prop:support-inclusion}, there exist a varifold $V_t$ and a subsequence of $\varepsilon\to0$, still denoted by $\varepsilon\to0$, such that $V_t^{\varepsilon}\to V_t$ as $\varepsilon\to0$ as Radon measures (so that $\mu_t=\|V_t\|$ by Proposition \ref{prop:convergence-measure}) and $\delta V_t$ is a Radon measure. Moreover, Proposition \ref{prop:support-inclusion} and Lemma \ref{lem:auxiliary-lemma3} imply that
\begin{align*}
V_t = V_t \lfloor_{\{x \in \Omega | \limsup_{r \to0} r^{-(d-1)} \|V_t\|(B_r(x)) > 0\} \times \mathbb{G}(d, d-1)}.
\end{align*}
The Allard rectifiability theorem now gives the following:
\begin{theorem}\label{thm:rectifiability}
For a.e. $t\geq0$, $\mu_t$ is rectifiable and has a generalized mean curvature vector $h(\cdot,t)$. Moreover, for any $\zeta\in C_c(\Omega;\mathbb{R}^d)$, $\varphi\in C_c(\Omega;\mathbb{R}_{\geq0})$, and a.e. $t\geq0$, we have
\begin{align*}
\delta V_{t}(\zeta)=-\int_{\Omega} \zeta \cdot h(\cdot, t) d \mu_{t}=\lim _{\varepsilon \rightarrow 0}\text{$\frac{1}{\sigma}$} \int_{\Omega}\left(\zeta \cdot \nabla \phi^{\varepsilon}\right)\left(\varepsilon \Delta \phi^{\varepsilon}-\frac{W^{\prime}\left(\phi^{\varepsilon}\right)}{\varepsilon}\right) d x,
\end{align*}
and
\begin{align*}
\int_{\Omega} \varphi|h|^{2} d \mu_{t} \leq \frac{1}{\sigma} \liminf _{\varepsilon \rightarrow 0} \int_{\Omega} \varphi\varepsilon^{-1}\left(\varepsilon\Delta \phi^{\varepsilon}-\frac{W^{\prime}\left(\phi^{\varepsilon}\right)}{\varepsilon}\right)^{2} d x<\infty.
\end{align*}
\end{theorem}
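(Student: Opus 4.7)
The plan is to adapt the Ilmanen--Tonegawa diffuse-interface argument, the essential new ingredient being the $L^{2}$ control of the forcing encoded by Lemma \ref{lem:L2-g-1-varepilson}. For each $\varepsilon\in(0,1)$ and a.e. $t\geq0$ I associate to $\phi^{\varepsilon}$ the $(d-1)$-varifold $V_{t}^{\varepsilon}$ supported on $\{|\nabla\phi^{\varepsilon}(\cdot,t)|\neq0\}$ with orientation $\nu^{\varepsilon}=\nabla\phi^{\varepsilon}/|\nabla\phi^{\varepsilon}|$ and weight $\mu_{t}^{\varepsilon}$, as in the commented-out computation following Theorem \ref{thm:vanishing-xi}. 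Direct integration by parts, combined with the algebraic identity $\varepsilon|\nabla\phi^{\varepsilon}|^{2}=(\varepsilon|\nabla\phi^{\varepsilon}|^{2}/2+W(\phi^{\varepsilon})/\varepsilon)+\xi^{\varepsilon}$ on $\{|\nabla\phi^{\varepsilon}|\neq0\}$, gives
$$\sigma\,\delta V_{t}^{\varepsilon}(\zeta)=\int_{\Omega}(\zeta\cdot\nabla\phi^{\varepsilon})\left(\varepsilon\Delta\phi^{\varepsilon}-\frac{W'(\phi^{\varepsilon})}{\varepsilon}\right)dx+R_{t}^{\varepsilon}(\zeta),$$
where the remainder $R_{t}^{\varepsilon}(\zeta)$ is a sum of two integrals controlled by $\|\nabla\zeta\|_{C^{0}(\Omega)}|\xi_{t}^{\varepsilon}|(\Omega)$. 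Theorem \ref{thm:vanishing-xi} yields $\int_{0}^{T}|\xi_{t}^{\varepsilon}|(\Omega)\,dt\to0$, so after a further extraction $|\xi_{t}^{\varepsilon}|(\Omega)\to0$ for a.e. $t$ and hence $R_{t}^{\varepsilon}(\zeta)\to0$ a.e.

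Next I would obtain a uniform $L^{2}$ bound on the principal right-hand side by invoking \eqref{eq:phi-epsilon} in the form $\varepsilon\Delta\phi^{\varepsilon}-W'(\phi^{\varepsilon})/\varepsilon=\varepsilon\partial_{t}\phi^{\varepsilon}-g^{\varepsilon}\sqrt{2W(\phi^{\varepsilon})}$; together with the elementary inequality $(a-b)^{2}\leq 2a^{2}+2b^{2}$, Proposition \ref{prop:energy-dissipation} and \eqref{eq:L-2-g-1-ep} then yield
$$\sup_{\varepsilon\in(0,\varepsilon_{1})}\int_{0}^{T}\int_{\Omega}\frac{1}{\varepsilon}\left(\varepsilon\Delta\phi^{\varepsilon}-\frac{W'(\phi^{\varepsilon})}{\varepsilon}\right)^{2}dx\,dt\leq C_{T}.$$
Fatou's lemma ensures that for a.e. $t$ the inner $\varepsilon$-integrand stays bounded along a subsequence. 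Applying Cauchy--Schwarz to the main term and using the identity $\int_{\Omega}|\zeta|^{2}\varepsilon|\nabla\phi^{\varepsilon}|^{2}dx=\sigma\bigl(\mu_{t}^{\varepsilon}(|\zeta|^{2})+\xi_{t}^{\varepsilon}(|\zeta|^{2})\bigr)$ together with $|\xi_{t}^{\varepsilon}|\to0$ and $\mu_{t}^{\varepsilon}\to\mu_{t}$, I extract $V_{t}^{\varepsilon}\to V_{t}$ by varifold compactness (with $\|V_{t}\|=\mu_{t}$) and deduce that $\delta V_{t}$ extends to a bounded linear functional on $L^{2}(\mu_{t};\mathbb{R}^{d})$. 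Riesz representation then produces $h(\cdot,t)\in L^{2}(\mu_{t};\mathbb{R}^{d})$ obeying the limit identity displayed in the theorem, while $L^{2}$ lower semicontinuity (together with vanishing discrepancy to replace $\varepsilon|\nabla\phi^{\varepsilon}|^{2}$ by $\sigma\,d\mu_{t}^{\varepsilon}/dx$ in the limit) delivers the weighted inequality $\int\varphi|h|^{2}d\mu_{t}\leq\sigma^{-1}\liminf_{\varepsilon\to0}\int\varphi\,\varepsilon^{-1}(\varepsilon\Delta\phi^{\varepsilon}-W'(\phi^{\varepsilon})/\varepsilon)^{2}\,dx$.

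Rectifiability of $\mu_{t}$ for a.e. $t$ then follows from Allard's rectifiability theorem: $V_{t}$ has locally bounded first variation with $L^{2}(\mu_{t})$ density $h$, the upper density bound is Proposition \ref{prop:upper-bound-density-ratio}, and the positive lower density on $\mathrm{supp}(\mu_{t})$ comes from the monotonicity formula \eqref{eq:monotonicity-formula-integrating-factor} combined with Lemma \ref{lem:auxiliary-lemma4}. The main obstacle is precisely the $L^{2}$ control of $\varepsilon\Delta\phi^{\varepsilon}-W'(\phi^{\varepsilon})/\varepsilon$ in the presence of the obstacle-driven forcing: the pointwise bound $|g^{\varepsilon}|\lesssim\varepsilon^{-\alpha/2}$ from \eqref{eq:L-infty-g-ep} alone would make $\int\varepsilon^{-1}(g^{\varepsilon}\sqrt{2W(\phi^{\varepsilon})})^{2}\,dx$ divergent in $\varepsilon$, and it is exactly the refined estimate \eqref{eq:L-2-g-1-ep}---which leverages the $L^{2}(0,T)$-regularity of $\lambda^{\varepsilon}$ from Proposition \ref{prop:L2-estimate} together with the uniform energy bound on $W(\phi^{\varepsilon})/\varepsilon$---that rescues the argument.
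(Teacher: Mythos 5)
Your proposal is correct and matches the paper's intended proof, namely the Ilmanen--Tonegawa varifold argument that the paper defers to \cite[Subsection 4.3]{T23}: integration by parts for $\delta V_t^{\varepsilon}$ with remainder controlled by $|\xi_t^{\varepsilon}|(\Omega)$, the vanishing of the discrepancy from Theorem \ref{thm:vanishing-xi}, varifold compactness and Riesz representation to produce $h\in L^2(\mu_t)$, and density bounds feeding into Allard-type rectifiability. You rightly isolate the $L^2$ bound \eqref{eq:L-2-g-1-ep} from Proposition \ref{prop:L2-estimate} as the new ingredient that keeps $\int_0^T\int_\Omega\varepsilon^{-1}\bigl(\varepsilon\Delta\phi^\varepsilon-W'(\phi^\varepsilon)/\varepsilon\bigr)^2\,dx\,dt$ uniformly bounded despite the obstacle-driven forcing, exactly as the (commented-out) computation in the source indicates.
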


We now turn our attention to the integrality of $\mu_t$. Among the references \cite{HT00,T03,TT16,T23} for the integrality, we mainly follow the presentation of \cite[Subsection 4.4]{T23} where the forcing term is the Lagrange multiplier $\lambda^{\varepsilon}(t)$ that is independent of the spatial variable. As our forcing term $g^{\varepsilon}(x,t)$ is spatially dependent, we need to deal with any issues that can arise from the spatial dependence.



\medskip

We state the integrality result. We leave several statements in \nameref{sec:appendix-A} that are necessary for the proof.

\begin{theorem}\label{thm:integrality}
For a.e. $t>0$, there exist a countably $(d-1)$-rectifiable set $M_t$ and a locally $\mathcal{H}^{d-1}\lfloor_{M_t}$-integrable function $\theta_t:M_t\to\mathbb{N}$ such that we have
\begin{align*}
\mu_t=\theta_t\mathcal{H}^{d-1}\lfloor_{M_t}.
\end{align*}
\end{theorem}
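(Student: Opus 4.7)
The plan is to establish integrality by following the blow-up framework of Tonegawa--Takasao (\cite{T23}, Subsection 4.4, and \cite{T03}), adapting the steps that originally relied on the pointwise nonpositivity $\xi^{\varepsilon}\leq 0$ (which fails here because of the spatially dependent forcing $g^{\varepsilon}$) to instead use the asymptotic substitutes developed in this section: Proposition \ref{prop:monotonicity-formula}, the estimate \eqref{eq:estimate-xi^+} on $(\xi^{\varepsilon})^+$, and the vanishing Theorem \ref{thm:vanishing-xi}. The rectifiability part is already supplied by Theorem \ref{thm:rectifiability}, so the main task is to show that the density $\theta_t(x) := \lim_{r\to 0^+}\frac{\mu_t(B_r(x))}{\omega_{d-1}r^{d-1}}$ exists $\mu_t$-a.e. and is integer-valued on the carrier $M_t$.

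First, I would fix $T>0$ and derive a Huisken-type density formula for the limit measure $\mu_t$ at a.e.\ $t\in(0,T)$. Starting from \eqref{eq:monotonicity-formula-integrating-factor} with $t_{1}=t$, $t_{2}\uparrow s$, I discard the $\xi^{\varepsilon}$-term using \eqref{eq:estimate-xi^+} (which kills $(\xi^{\varepsilon})^{+}$ asymptotically since $\lambda'-\lambda>0$) and control the $(\xi^{\varepsilon})^{-}$ contribution via Theorem \ref{thm:vanishing-xi}. Passing $\varepsilon\to 0$ then yields, for $\mu_t$-a.e.\ $x$,
\begin{equation*}
\theta_t(x)\;=\;\lim_{s\to t^+}\int_{\mathbb{R}^d}\rho_{(x,s)}(y,t)\,d\mu_t(y),
\end{equation*}
with the limit finite; this takes the place of \cite[Proposition 8]{T23} which was proved using nonpositivity of $\xi^{\varepsilon}$.

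Next, I would run the rescaling argument at a typical Lebesgue point $(x_0,t_0)$ of the density. Setting $\hat{\phi}^{\varepsilon}(\hat{x},\hat{t})=\phi^{\varepsilon}(x_0+\varepsilon\hat{x},t_0+\varepsilon^{2}\hat{t})$, the rescaled equation \eqref{eq:phi-epsilon-rescaled} has forcing of order $\varepsilon\hat{g}^{\varepsilon}=O(\varepsilon^{1-\alpha/2})\to 0$ by \eqref{eq:L-infty-g-ep-rescaled}, so the blow-up limits are stationary solutions of the unforced Allen--Cahn equation on $\mathbb{R}^d$. Combining Theorem \ref{thm:vanishing-xi} with the density upper bound from Proposition \ref{prop:upper-bound-density-ratio} and the $L^{\infty}$ bound from Proposition \ref{prop:upper-bound-discrepancy}, the rescaled energy measures converge to a stationary integral varifold consisting of finitely many flat $(d-1)$-planes through the origin, each carrying the standard single-layer weight $\sigma$. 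Dividing by $\sigma$ in the definition \eqref{eq:def-mu}, the multiplicity $\theta_t(x_0)$ equals the number of such layers, hence is an integer.

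The hardest step will be the substitutions at \cite[(96), (122)]{T23} and the closing argument at the bottom of \cite[p.~40]{T23}, all of which originally invoke $\xi^{\varepsilon}\leq 0$ to drop positive error terms. My replacement is systematic: wherever the original argument uses $\int\rho\,d\xi^{\varepsilon}\leq 0$, I instead use the inequality
\begin{equation*}
\int_0^{t}\!\!\int_{\mathbb{R}^d}\frac{\rho_{(y,s)}(x,\tau)}{2(s-\tau)}\,d(\xi_{\tau}^{\varepsilon})^{+}\,dx\,d\tau\;\leq\;c''\,\varepsilon^{\lambda'-\lambda}\bigl(1+|\log\varepsilon|+(\log s)^{+}\bigr)
\end{equation*}
from \eqref{eq:estimate-xi^+} for the positive part and Theorem \ref{thm:vanishing-xi} for the negative part. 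Since $\lambda'>\lambda$, the resulting perturbation vanishes in the limit $\varepsilon\to 0$, and the remainder of Tonegawa's integrality proof transfers verbatim. Once integrality of $\theta_t$ is obtained, combining it with Theorem \ref{thm:rectifiability} produces the rectifiable set $M_t$ of positive multiplicity supporting $\mu_t$.
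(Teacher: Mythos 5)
Your proposal matches the paper's intended proof: the paper omits the details but states that the argument adapts \cite[Subsection 4.4]{T23} (and \cite{T03}) with modifications precisely at \cite[(96), (122)]{T23} and the bottom of p.~40, replacing the nonpositivity of $\xi^{\varepsilon}$ by Proposition \ref{prop:monotonicity-formula}, \eqref{eq:estimate-xi^+}, and Theorem \ref{thm:vanishing-xi}---which is exactly what you carry out. Your blow-up at scale $\varepsilon$ (where the rescaled forcing $\varepsilon\hat{g}^{\varepsilon}=O(\varepsilon^{1-\alpha/2})$ vanishes by \eqref{eq:L-infty-g-ep-rescaled}) and the resulting integer multiplicity count are consistent with the framework the paper invokes.
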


\begin{proof}[Proof of Theorem \ref{thm:integrality}]
Let $h^{\varepsilon}:=\Delta \phi^{\varepsilon}-\frac{W'(\phi^{\varepsilon})}{\varepsilon^2}$. Then, for a.e. $t_0>0$, there exists a subsequence $\varepsilon_i\to0$ of $\varepsilon\to0$ such that $V_{t_0}^{\varepsilon_i}\to V_{t_0}$ as $i\to\infty$ as Radon measures,
\begin{align*}
\lim_{i\to\infty}\int_{\Omega}|\xi_{\varepsilon_i}(x,t_0)|dx=0,
\end{align*}
and
\begin{align*}
c_{H}(t_0) := \sup_{i \in \mathbb{N}} \int_{\Omega} \varepsilon_{i} |h^{\varepsilon_{i}} \nabla \phi^{\varepsilon_{i}}|(x, t_0) dx < +\infty.
\end{align*}
Here, $(V_{t_0}^{\varepsilon_i})_i$ is a sequence of varifolds as constructed before the statement of Theorem \ref{thm:rectifiability}. Let $\mu_{t_0}:=\|V_{t_0}\|$. We claim the integrality of $\mu_{t_0}$.

\medskip

Set
\begin{align*}
A_{i,m} := \left\{ x \in \Omega\,:\,\int_{B_r(x)} \varepsilon_i |h^{\varepsilon_i} \nabla \phi^{\varepsilon_i}|(x, t_0) dx \le m \mu_{t_0}^{\varepsilon_i}(B_r(x)) \text{ for any } r \in \left(0, \frac{1}{2}\right) \right\}
\end{align*}
and
\begin{align*}
A_m := \{ x \in \Omega\,:\,\text{there exists } x_i \in A_{i,m} \text{ for any } i \in \mathbb{N} \text{ such that } x_i \to x \}
\end{align*}
for $i,m\in\mathbb{N}$, and set $A:=\cup_{m\in\mathbb{N}}A_m$. Then, by the Besicovitch covering theorem, we have
\begin{align*}
\mu_{t_0}^{\varepsilon_i}(\Omega \setminus A_{i,m}) \leq \frac{C_dc_H(t_0)}{m},
\end{align*}
where $C_d>0$ is a constant that depends only on the dimension $d$. We claim that
\begin{align*}
\mu_{t_0}(\Omega\setminus A)=0.
\end{align*}
Otherwise, there exists a compact subset $K\subset\Omega\setminus A$ with $\mu_{t_0}(K)>0$. We note that $K\subset\Omega\setminus A_m$ for all $m\in\N$. By the definition of $A_m$, we see that for each $x\in K$ and $m\in\N$, there is a ball $B_r(x)$ centered at $x$ such that $B_r(x)\cap A_{i,m}=\emptyset$ for sufficiently large $i$. Therefore, by the compactness of $K$, for each $m\in\N$, there exist an open neighborhood $O_m$ of $K$ and $i_m\in\N$ such that $O_m\cap A_{i,m}=\emptyset$ for all $i\geq i_m$. Let $\phi_m\in C_c(O_m;[0,1])$ denote a smooth function such that $\phi_m=1$ on $K$. Then, for each $m\in\N$,
\begin{align*}
\mu_{t_0}(K) \leq \int_{\Omega} \phi_m \, \mathrm{d}\mu_{t_0} = \lim_{i\to\infty} \int_{\Omega} \phi_m \, \mathrm{d}\mu_{t_0}^{\varepsilon_i} &= \lim_{i\to\infty} \int_{\Omega \setminus A_{i,m}} \phi_m \, \mathrm{d}\mu_{t_0}^{\varepsilon_i} \\
&\leq \liminf_{i\to\infty} \mu_{t_0}^{\varepsilon_i}(\Omega \setminus A_{i,m})\leq\frac{C_dc_H(t_0)}{m}.
\end{align*}
As $m\in\N$ was arbitrary, we may take the limit as $m\to\infty$ to yield a contradiction to $\mu_{t_0}(K)>0$.

\medskip

Therefore, for $\mu_{t_0}$-a.e. $x\in \mathrm{supp}(\mu_{t_0})$, $x\in A_m$ for some $m\in\mathbb{N}$ and $\mu_{t_0}$ has an approximate tangent space $P$ by the rectifiability of $\mu_{t_0}$ from Theorem \ref{thm:rectifiability}. Fix such $x$ and let $\theta := \lim_{r \to 0} \frac{\mu_{t_0}(B_r(x))}{\omega_{d-1} r^{d-1}}$. Once $\theta\in\mathbb{N}$ is proved, we are done.

\medskip

By translations and rotations, we may assume without loss of generality that 
$x=0$ and $P=\{(x_1,\cdots,x_d)\in\mathbb{R}^d\,:\,x_d=0\}$. We perform rescaling as follows. Let $\Phi_r(x):=\frac{x}{r}$ for $r>0$. Then, for a sequence $r_i\to0$, we have
\begin{align*}
\lim_{i \to \infty} (\Phi_{r_i})_{\#} V^{\varepsilon_i}_{t_0} = \theta \mathcal{H}^{d-1}\lfloor_P
,
\end{align*}
where $(\cdot)_\#$ denotes the push-forward of a measure. By the definition of $A_m$, we can find a sequence $(x_i)_i$ with $x_i\in A_{i,m}$ and $x_i\to0$ as $i\to\infty$. We may assume without loss of generality, by passing to a subsequence if necessary, 
\begin{align}\label{eq:rescaling-rule}
\lim_{i \to \infty} \frac{x_i}{r_i} = 0, \quad \lim_{i \to \infty} \frac{\varepsilon_i}{r_i} = 0,\quad\text{and}\qquad\lim_{i\to\infty}\frac{1}{r_i^{d-1}} \int_{\Omega} |\xi_{\varepsilon_i}(x, t_0)| dx=0.
\end{align}

We further set $\tilde{x}=\frac{x}{r_i}$, $\tilde{t}=\frac{t-t_0}{r_i^2}$, $\tilde{\varepsilon}_i=\frac{\varepsilon_i}{r_i}$. We set $\tilde{\phi}^{\tilde{\varepsilon}_i}(\tilde{x},\tilde{t})=\phi^{\varepsilon_i}(x,t)$, $\tilde{g}^{\tilde{\varepsilon}_i}(\tilde{x},\tilde{t})=r_ig^{\varepsilon_i}(x,t)$, $\tilde{\xi}_{\tilde{\varepsilon}_i}(\tilde{x},\tilde{t})=r_i\xi_{\varepsilon_i}(x,t)$, and $\tilde{h}^{\tilde{\varepsilon}_i}(\tilde{x},\tilde{t})=r_ih^{\varepsilon_i}(x,t)$. Then, $\tilde{\phi}^{\tilde{\varepsilon}_i}$ solves
\begin{align*}
\tilde{\varepsilon_i}\partial_{\tilde{t}}\tilde{\phi}^{\tilde{\varepsilon_i}} = \tilde{\varepsilon_i}\Delta_{\tilde{x}}\tilde{\phi}^{\tilde{\varepsilon_i}} - \frac{W'(\tilde{\phi}^{\tilde{\varepsilon_i}})}{\tilde{\varepsilon_i}} + \widetilde{g}^{\tilde{\varepsilon_i}}\sqrt{2W(\tilde{\phi}^{\tilde{\varepsilon_i}})}.
\end{align*}
We also check, by \eqref{eq:rescaling-rule}, that
\begin{align}\label{eq:before-drop-1}
\int_{B_3(0)} |\tilde{\xi}_{\tilde{\varepsilon}_i}(\tilde{x}, 0)| d\tilde{x} = \frac{1}{r_i^{d-1}} \int_{B_{3r_i}(0)} |\xi_{\varepsilon_i}(x, t_0)| dx\to0\quad\text{as}\quad i\to\infty,
\end{align}
and, by using Proposition \ref{prop:upper-bound-density-ratio}, the definition of $x_i\in A_{i,m}$, that
\begin{align}\label{eq:before-drop-2}
\tilde{\varepsilon}_i \int_{B_3(0)} |\tilde{h}^{\tilde{\varepsilon}_i} \nabla_{\tilde{x}} \tilde{\phi}^{\tilde{\varepsilon}_i}(\tilde{x}, 0)| d\tilde{x} &= \frac{\varepsilon_i}{r_i^{d-2}} \int_{B_{3r_i}(0)} |h^{\varepsilon_i} \nabla \phi^{\varepsilon_i}(x, t_0)| dx\notag \\
&\le \frac{m}{r_i^{d-2}} \mu_{t_0}^{\varepsilon_i}(B_{4r_i}(x_i)) \le C_{d,t_0} m r_i \to 0 \quad\text{as}\quad i \to \infty.
\end{align}
Also, by Corollary \ref{cor:cor4.1-TT16-restate}(i), we have that
\begin{align}\label{eq:before-drop-3}
\int_0^1\frac{1}{\tau^d}\left(\int_{B_{\tau}(\tilde{x})}\left(\widetilde{\xi}_{\widetilde{\varepsilon}_i}(\widetilde{y},0)\right)^+d\widetilde{y}\right)d\tau&=\int_0^1\frac{1}{\tau^dr_i^{d-1}}\left(\int_{B_{\tau r_i}(x)}\left(\xi_{\varepsilon_i}(y,t_0)\right)^+dy\right)d\tau\notag\\
&=\int_0^{r_i}\frac{1}{\sigma^d}\left(\int_{B_{\sigma}(x)}\left(\xi_{\varepsilon_i}(y,t_0)\right)^+dy\right)d\sigma\notag\\
&\leq c'\varepsilon_i^{\lambda'-\lambda}|\log(\varepsilon_i)|\to0\quad\text{as}\quad i \to \infty.
\end{align}
We now prove that
\begin{align}\label{eq:before-drop-4}
\lim_{i \to \infty} \int_{B_3(0)} (1 - (\nu_d^i(\widetilde{x},0))^2)\tilde{\varepsilon}_i |\nabla_{\tilde{x}}\tilde{\phi}^{\tilde{\varepsilon}_i}(\widetilde{x},0)|^2 d\tilde{x} = 0,
\end{align}
where $\nu^i_d$ refers to the $\tilde{x}_d$-component of the vector $\frac{\nabla_{\tilde{x}}\tilde{\phi}^{\tilde{\varepsilon}_i}}{|\nabla_{\tilde{x}}\tilde{\phi}^{\tilde{\varepsilon}_i}|}$ when $|\nabla_{\tilde{x}}\tilde{\phi}^{\tilde{\varepsilon}_i}|\neq0$ and $0$ otherwise. Indeed, we define the function $\psi\,:\,\mathbb{G}(d,d-1)\to\R$ by $\psi(S)=1-\nu_d^2$, where $\nu=(\nu_1,\cdots,\nu_d)\in\mathbb{S}^{d-1}$ is one of the unit normal vectors to $S$. Then, $\psi(P)=0$, and for $\phi\in C_c(\R^d)$, we see that $\phi\psi\in C_c(\R^d\times\mathbb{G}(d,d-1))$. Therefore,
\begin{align*}
\lim_{i\to\infty} \tilde{V}_0^{\tilde{\varepsilon}_i}(\phi\psi) &= \int \phi(\tilde{x})(1 - (\nu_d^i)^2) \, d\|\tilde{V}_0^{\tilde{\varepsilon}_i}\|(\tilde{x}) \\
&= \lim_{i\to\infty} (\Phi_{r_i})_{\#} V_{t_0}^{\varepsilon_i} (\phi\psi) = \theta \int_P \phi(\tilde{x})\psi(P) \, d\mathcal{H}^{d-1}(\tilde{x}) = 0,
\end{align*}
which proves \eqref{eq:before-drop-4}. Here, we used the facts that $\lim_{i \to \infty} (\Phi_{r_i})_{\#} V^{\varepsilon_i}_{t_0} = \theta \mathcal{H}^{d-1}\lfloor_P$ and that $\psi(P)=0$.

\medskip

From now on, we suppress the $\widetilde{\cdot}$ symbol and the time variable for convenience. We now choose the unique positive integer $N\in\mathbb{N}$ satisfying $N-1\leq\theta< N.$

\medskip

Let $s\in(0,1)$. By Proposition \ref{prop:appendix-3} and Theorem \ref{thm:vanishing-xi}, there exists $b>0$ such that for sufficiently large $i$, it holds that
\begin{align}\label{eq:after-drop-0}
\int_{\{x \in B_3(0) | |\phi^{\varepsilon_i}(x)| \ge 1-b\}} \frac{\varepsilon_i |\nabla \phi^{\varepsilon_i}|^2}{2} + \frac{W(\phi^{\varepsilon_i})}{\varepsilon_i} dx \le s.
\end{align}
For these $s,b>0$ and $C>0$ from Lemma \ref{lem:appendix-1}, we can find $\rho\in(0,1),\,L>1$ as in Propositions \ref{prop:appendix-1}, \ref{prop:appendix-2}. We choose $a=L\varepsilon_i$ in Proposition \ref{prop:appendix-1}. Let
\begin{align*}
G_i &:= B_2(0) \cap \{|\phi^{\varepsilon_i}| \le 1 - b\} \\
    &\quad \cap \left\{x : \int_{B_r(x)} \varepsilon_i|h^{\varepsilon_i}\nabla \phi^{\varepsilon_i}| + |\xi_{\varepsilon_i}| + (1 - \nu_d^2)\varepsilon_i|\nabla \phi^{\varepsilon_i}|^2 dx   \le \rho\mu_0^{\varepsilon_i}(B_r(x))\quad\text{if } L\varepsilon_i  \le r \le 1 \right\}.
\end{align*}
The Besicovitch covering theorem, together with \eqref{eq:before-drop-1}, \eqref{eq:before-drop-2}, and \eqref{eq:before-drop-4}, implies that
\begin{align}\label{eq:after-drop-1}
\mu_0^{\varepsilon_i}((B_2(0) \cap \{|\phi^{\varepsilon_i}| \le 1 - b\}) \setminus G_i) \le \frac{C_d}{\rho} \int_{B_3(0)} \varepsilon_i|h^{\varepsilon_i}\nabla \phi^{\varepsilon_i}| + |\xi_{\varepsilon_i}| + (1 - \nu_d^2)\varepsilon_i|\nabla \phi^{\varepsilon_i}|^2 dx \to 0
\end{align}
as $i\to\infty$. Moreover, by using \eqref{eq:before-drop-3} (and changing $\rho$ by a smaller number if necessary), we can check that for sufficiently large $i$, it holds that
\begin{align}\label{eq:after-drop-2}
\frac{\mu_0^{\varepsilon_i}(B_r(x))}{\omega_{d-1} r^{d-1}} \ge 1 - 2s \quad \text{for any } x \in G_i \text{ and } r \in [L\varepsilon_i, 1].
\end{align}
Indeed, \eqref{eq:after-drop-2} holds for $r=L\varepsilon_i$ by Proposition \ref{prop:appendix-1}. Moreover, by integration by parts, we can write
\begin{align*}
\frac{d}{d\tau} \left\{ \frac{1}{\tau^{d-1}} \mu_0^{\varepsilon_i}(B_{\tau}(x)) \right\} + \frac{1}{\sigma\tau^d} \int_{B_\tau(x)} (\xi_{\varepsilon_i} + \varepsilon_i h^{\varepsilon_i}(y \cdot \nabla \phi^{\varepsilon_i})) dy \\ - \frac{\varepsilon_i}{\sigma\tau^{d+1}} \int_{\partial B_\tau(x)} (y \cdot \nabla \phi^{\varepsilon_i})^2 d\mathcal{H}^{d-1}(y) = 0.
\end{align*} Thus, thanks to Corollary \ref{cor:cor4.1-TT16-restate}(i),
\begin{align*}
\frac{1}{\tau^{d-1}} \mu_0^{\varepsilon_i}(B_{\tau}(x))\bigg|_{\tau=L\varepsilon_i}^r&\geq -\int_0^1\frac{1}{\tau^d}\left(\int_{B_{\tau}(x)}\xi_{\varepsilon_i}(y,0)^+dy\right)d\tau - \int_{L\varepsilon_i}^r \frac{1}{\sigma\tau^d} \int_{B_\tau(x)} \varepsilon_i h^{\varepsilon_i}(y \cdot \nabla \phi^{\varepsilon_i}) dyd\tau \\
&\geq -c'\varepsilon_i^{\lambda'-\lambda}|\log(\varepsilon_i)|- \int_{L\varepsilon_i}^r \frac{1}{\sigma\tau^d} \int_{B_\tau(x)} \varepsilon_i \tau |h^{\varepsilon_i}\nabla \phi^{\varepsilon_i}| dyd\tau \\
&\geq - \frac{2\rho D_T}{\sigma}.
\end{align*} for sufficiently small $\varepsilon_i\in(0,1)$. Here, we used \eqref{eq:before-drop-3}, Proposition \ref{prop:upper-bound-density-ratio}, and the definition of $G_i$. Therefore, \eqref{eq:after-drop-2} is proved for sufficiently small $\varepsilon_i\in(0,1)$ by letting $\rho>0$ be small.

Furthermore, if there exists $\delta>0$ such that $\sup_{x\in G_i}\mathrm{dist}(x,P)\geq3\delta\text{ for infinitely many }i$, there would exist $x\in G_i\cap\{|x_d|>2\delta\}$. Take a function $\phi\in C^1_c(B_3(0);\mathbb{R}_{\geq0})$ such that $\phi=1$ on $B_2(0)\cap\{|x_d|>\delta\}$ and $\phi=0$ on $\left\{|x_d|<\frac{1}{2}\delta\right\}$. From the fact that $\mu_0^{\varepsilon_i} = \|V_0^{\varepsilon_i}\| \rightharpoonup \theta \mathcal{H}^{d-1} \lfloor P$ as $i\to\infty$ and \eqref{eq:after-drop-2}, we have
\begin{align*}
(1-2s)\omega_{d-1} \delta^{d-1} \leq \mu_0^{\varepsilon_i}(B_\delta(x)) \leq \mu_0^{\varepsilon_i}(\phi) \leq (1-2s)\omega_{d-1} \frac{\delta^{d-1}}{2}\quad\text{for large }i,
\end{align*}which is a contradiction. Therefore, it holds that
\begin{align}\label{eq:after-drop-3}
\sup_{x\in G_i}\mathrm{dist}(x,P) \to 0 \quad \text{as } i \to \infty.
\end{align}

\medskip

For $x\in P\cap B_1(0)$, we set $Y:=P^{-1}(x)\cap G_i\cap\{x \,:\,\phi^{\varepsilon_i}(x)=l\}$, where $P^{-1}(x)$ denotes the preimage of $x$ under the projection, say $P$ by abuse of notations, onto the plane $P$. We now show that for sufficiently large $i$,
\begin{align}\label{eq:after-drop-4}
|Y| \le N - 1, \quad \text{for any } x \in P \cap B_1(0) \text{ and } l \in[-1+b,1-b].
\end{align}
If not, there would exist a subset $Y'=\{y_1,\cdots,y_N\}\subset Y$ of cardinality $N$. From \eqref{eq:after-drop-3}, we see $\mathrm{diam}(Y')\leq\rho$ for sufficiently large $i$. Moreover, $|y_j-y_k|>3L\varepsilon_i$ for $1\leq j<k\leq N$ due to \eqref{eq:appendix-1}, for which the assumptions are met thanks to Propositions \ref{prop:upper-bound-discrepancy}, \ref{prop:upper-bound-density-ratio}, Lemma \ref{lem:appendix-1}, and the definition of $G_i$. Now, Proposition \ref{prop:appendix-2} with $a=L\varepsilon_i$ and $R=1$ implies
\begin{align}\label{eq:after-drop-5}
\sum_{j=1}^{N} \frac{1}{(L\varepsilon_i)^{d-1}} \mu_0^{\varepsilon_i} (B_{L\varepsilon_i}(y_j)) \leq s + (1+s) \mu_0^{\varepsilon_i} (\{z\,:\, \operatorname{dist}(z, Y') < 1\}).
\end{align}
From \eqref{eq:after-drop-3} and $\mu_0^{\varepsilon_i}\to\theta\mathcal{H}^{d-1}\lfloor_P$, we have
\begin{align*}
\limsup_{i \to \infty} \mu_0^{\varepsilon_i} (\{z \mid \operatorname{dist}(z, Y') < 1\}) \leq \theta \mathcal{H}^{d-1} \lfloor_P (\overline{B_1(0)}) = \theta \omega_{d-1}.
\end{align*}
On the other hand, by \eqref{eq:after-drop-2}, the fact that $|y_j-y_k|>3L\varepsilon_i$ for $1\leq j<k\leq N$, and \eqref{eq:after-drop-5}, we obtain
\begin{align*}
N\omega_{d-1}(1-2s)\leq s+(1+s)\theta\omega_{d-1},
\end{align*}
which contradicts to the choice of $N$ that satisfies $N-1\leq\theta<N$ for sufficiently small $s\in(0,1)$.

\medskip

Let $\hat{V}_{0}^{\varepsilon_i} := V_{0}^{\varepsilon_i} \lfloor_{\{|x_d| \le 1\} \times \mathbb{G}(d, d-1)}$. Then, by \eqref{eq:before-drop-1}, \eqref{eq:before-drop-2}, \eqref{eq:after-drop-2}, we have
\begin{align*}
\omega_{d-1}\theta = \mathcal{H}^{d-1}\lfloor_P(B_1(0)) &\le \liminf_{i\to\infty} \| P_{\#} \hat{V}_0^{\varepsilon_i} \|(B_1(0)) = \liminf_{i\to\infty} \int_{B_1(0)} |\nu_d^i| d\mu_0^{\varepsilon_i} \\
&\le \liminf_{i\to\infty} \int_{B_1(0)\cap G_i} |\nu_d^i| d\mu_0^{\varepsilon_i} + 2s \\
&\le \liminf_{i\to\infty} \frac{1}{\sigma} \int_{B_1(0)\cap G_i} |\nu_d^i| \sqrt{2W(\phi^{\varepsilon_i})} |\nabla\phi^{\varepsilon_i}| dx + 2s.
\end{align*}
By the area formula and the coarea formula, we derive
\begin{align*}
& \int_{B_1(0)\cap G_i} |\nu_d^i| \sqrt{2W(\phi^{\varepsilon_i})} |\nabla\phi^{\varepsilon_i}| dx \\
&= \int_{-1+b}^{1-b} \sqrt{2W(\tau)} \int_{B_1(0)\cap G_i \cap \{\phi^{\varepsilon_i}=\tau\}} |\Lambda_{d-1}\nabla Px \circ (I - \nu^i \otimes \nu^i)| d\mathcal{H}^{d-1}(x) d\tau \\
&= \int_{-1+b}^{1-b} \sqrt{2W(\tau)} \int_{B_1(0)\cap\{x_d=0\}} \mathcal{H}^0(\{\phi^{\varepsilon_i} = \tau\} \cap G_i \cap P^{-1}(x)) d\mathcal{H}^{d-1}(x) d\tau \\
&\le \int_{-1+b}^{1-b} \sqrt{2W(\tau)} \int_{B_1(0)\cap\{x_d=0\}} (N-1) d\mathcal{H}^{d-1}(x) d\tau \\
&\le \sigma(N-1)\omega_{d-1}.
\end{align*}
Connecting the above two inequalities and letting $s\to0$, we see that $\theta\leq N-1$, and therefore, $\theta=N-1$. We finish the proof.
\end{proof}

\section{Proof of Theorem \ref{thm:main-thm}}\label{sec:pf-main-thm}
We prove Theorem \ref{thm:main-thm} in this section.


\begin{proof}[Proof of Theorem \ref{thm:main-thm}]
We begin by verifying the parts of the theorem that are readily developed in the previous sections.

\medskip

\textbf{Part (C).} It directly follows from Proposition \ref{prop:L2-estimate} and the weak-compactness of $L^2(0,T)$ for any $T>0$.

\medskip

\textbf{Parts (A.i)-(A.iii).} Part (A.i) follows from Proposition \ref{prop:convergence-measure}. Part (A.ii) is obtained by applying the dominated convergence theorem and Part (A.i) to
\begin{align*}
\lim_{\varepsilon\to0} \int_0^{\infty}\int_\Omega \varphi \, d\mu_t^{\varepsilon} \, dt = \int_{\Omega \times [0, \infty)} \varphi \, d\mu \qquad \text{for any } \varphi \in C_c(\Omega \times [0, \infty)).
\end{align*}
Part (A.iii) follows from Theorem \ref{thm:integrality}. We prove Part (A.iv) after Part (B).

\medskip

\textbf{Part (B).} We let $\psi^{\varepsilon}:=K\circ\phi^{\varepsilon}$, where $K(s):=\sigma^{-1}\int_{-1}^s\sqrt{2W(u)}du$. By \eqref{eq:modica-mortola}, the function $\psi^{\varepsilon}$ satisfies
\begin{align*}
\int_{\Omega}|\nabla\psi^{\varepsilon}(\cdot,t)|dx\leq\mu^{\varepsilon}_t(\chi_{\Omega})\leq C.
\end{align*}
Similarly, due to Proposition \ref{prop:energy-dissipation},
\begin{align*}
\int_0^T\int_{\Omega}|\psi^{\varepsilon}_t|dxdt\leq\sigma^{-1}\int_0^T\int_{\Omega}\frac{\varepsilon(\phi^{\varepsilon}_t)^2}{2}+\frac{W(\phi^{\varepsilon})}{\varepsilon}dxdt\leq C_T\qquad\text{for }T>0.
\end{align*}
Therefore, $\psi^{\varepsilon}\in BV_{loc}(\Omega\times[0,\infty))$, and there exist a subsequence $\varepsilon\to0$ (with the same index by abuse of notations) and $\psi\in BV_{loc}(\Omega\times[0,\infty))$ such that $\psi^{\varepsilon}\to\psi$ strongly in $L^1_{loc}(\Omega\times[0,\infty))$ and a.e. pointwise. As $\phi^{\varepsilon}$ converges to $\pm1$ as $\varepsilon\to0$ a.e., we see that $\psi$ assumes only the two values $0$ and $1$, which implies
\begin{align*}
\psi=\lim_{\varepsilon\to0}\frac{1+K^{-1}\circ\psi^{\varepsilon}}{2}.
\end{align*}
Therefore, $\psi\in BV_{loc}(\Omega\times[0,\infty))$, and $\phi^{\varepsilon}$ converges to $2\psi-1$ strongly in $L^1_{loc}(\Omega\times[0,\infty))$ and a.e. pointwise. We further check that for almost every $0\leq s<t$, it holds
\begin{align*}
\int_{\Omega}|\psi(\cdot,t)-\psi(\cdot,s)|dx&\leq\liminf_{\varepsilon\to0}\int_{\Omega}|\psi^{\varepsilon}(\cdot,t)-\psi^{\varepsilon}(\cdot,s)|dx\\
&\leq\liminf_{\varepsilon\to0}\int_{\Omega}\int_s^t|\psi^{\varepsilon}_t|dtdx\\
&\leq\liminf_{\varepsilon\to0}\int_{\Omega}\int_s^t\frac{\varepsilon|\phi^{\varepsilon}_t|^2}{2}\sqrt{t-s}+\frac{W(\phi^{\varepsilon})}{\varepsilon\sqrt{t-s}}dtdx\\
&\leq C\sqrt{t-s}.
\end{align*}
Here, Fatou's lemma is used in the first line and Proposition \ref{prop:energy-dissipation} is used in the last line. Therefore, $\psi\in C^{0,1/2}([0,\infty);L^1(\Omega))$.

As $\psi$ assumes only the two values $0$ and $1$, it has support set, say $E(t)$ at time $t\geq0$. Then, we clearly have $E(0)=U_0$, $O_+\subset E(t)$ and $O_-\subset\Omega\setminus E(t)$ for $t\geq0$ from Proposition \ref{prop:barrier-functions}. Moreover, by \eqref{eq:VP-from-energy-dissipation}, we see that $\int_{\Omega\setminus O}\psi(\cdot,t)dx=\int_{\Omega\setminus O}\psi(\cdot,0)dx$, which implies $\mathcal{L}^d(E(t))=\mathcal{L}^d(E(0))$ for $t\geq0$ from the fact that $O_+\subset E(t)$ and $O_-\subset\Omega\setminus E(t)$. Hence, (B.i) and (B.ii) are proved. To prove (B.iii), let $\Phi\in C(\Omega;\mathbb{R}_{\geq0})$ and $t>0$. Then,
\begin{align*}
\int_{\Omega}\Phi d\|\nabla\psi(\cdot,t)\|\leq\liminf_{\varepsilon\to0}\int_{\Omega}\Phi|\nabla\psi^{\varepsilon}|dx\leq\liminf_{\varepsilon\to0}\sigma^{-1}\int_{\Omega}\Phi\left(\frac{\varepsilon|\nabla\phi^{\varepsilon}|^2}{2}+\frac{W(\phi^{\varepsilon})}{\varepsilon}\right)dx=\int_{\Omega}\Phi d\mu_t.
\end{align*}
Therefore, (B.iii) is proved.

We now prove (B.iv). We claim that $\mathrm{supp}(\mu_t)\cap O=\emptyset$ for all $t\geq0$, where $\mu_t$ is from (A). It suffices to prove $\mu^{\varepsilon}_t(\Phi)\to0$ as $\varepsilon\to0$ for $\Phi\in C_c(O_+)$ as the argument for $\Phi\in C_c(O_-)$ is similar. For the part of the energy responsible for $\frac{W(\phi^{\varepsilon})}{\varepsilon}$, it suffices to verify that $\int_{B_{r}(y)}\frac{W(\phi^{\varepsilon})}{\varepsilon}dx\to0$ as $\varepsilon\to0$ for every ball $B_{r}(y)$, $r\in(0,R_0)$ such that $B_{R_0+2\sqrt{\varepsilon}}(y)\subset O_+$. We recall the notations $\underline{r}_{y}(x)=\frac{1}{2R_0}\left(R_0^2-|x-y|^2\right)$, $\underline{\phi}^{\varepsilon}_y(x)=q^{\varepsilon}(\underline{r}_y(x))$. By Propositions \ref{prop:strong-max-principle}, \ref{prop:barrier-functions}, we have $\underline{\phi}^{\varepsilon}_y\leq\phi^{\varepsilon}\leq1$ everywhere, and therefore, for a fixed $r\in(0,R_0)$ we have
\begin{align*}
\int_{B_{r}(y)}\frac{W(\phi^{\varepsilon})}{\varepsilon}dx\leq\int_{B_{r}(y)}\frac{W(\underline{\phi}^{\varepsilon}_y)}{\varepsilon}dx&\leq\int_{B_{r}(y)}\varepsilon^{-1}W\left(\tanh{\left(\frac{R_0^2-r^2}{2R_0\varepsilon}\right)}\right)dx\\
&\leq2\mathcal{L}^d(B_r(y))\varepsilon^{-1}\left|1-\tanh{\left(\frac{R_0^2-r^2}{2R_0\varepsilon}\right)}\right|^2\to0\quad\text{as }\varepsilon\to0.
\end{align*}

For the other part of the energy responsible for $\frac{\varepsilon|\nabla\phi^{\varepsilon}|^2}{2}$, it is sufficient to prove that $\int_{B_{R_0}(y)}\Phi\cdot\frac{\varepsilon|\nabla\phi^{\varepsilon}|^2}{2}dx\to0$ as $\varepsilon\to0$ for $\Phi\in C_c(B_{R_0}(y))$ where $B_{R_0+2\sqrt{\varepsilon}}(y)\subset O_+$. The case $t=0$ is clear and we assume the other case $t>0$. Find $r\in(0,R_0)$ such that $\mathrm{supp}(\Phi)\subset\overline{B}_r(y)\subset B_{R_0}(y)$. Write
\begin{align}\label{eq:rewriting}
\int_{B_{r}(y)}\Phi\nabla(\phi^{\varepsilon}-1)\cdot\nabla\phi^{\varepsilon}dx=-\int_{B_{r}(y)}\Phi(\phi^{\varepsilon}-1)\Delta\phi^{\varepsilon}+(\phi^{\varepsilon}-1)\nabla\Phi\cdot\nabla\phi^{\varepsilon}dx
\end{align}
By Propositions \ref{prop:strong-max-principle}, \ref{prop:barrier-functions}, we have $\underline{\phi}^{\varepsilon}_y\leq\phi^{\varepsilon}\leq1$ everywhere, and thus,
\begin{align}\label{eq:fast-convergence}
\left|1-\phi^{\varepsilon}\right|\leq\left|1-\tanh{\left(\frac{R_0^2-r^2}{2R_0\varepsilon}\right)}\right|\qquad\text{on }\overline{B}_r(y).
\end{align}
From the parabolic scaling argument, we have (see \cite[Lemma 3.9]{T21})
\begin{align}\label{eq:slow-divergence}
\sup_{\Omega\times[\varepsilon^2,T)}\left\{\varepsilon|\nabla\phi^{\varepsilon}|+\varepsilon^2\|D^2\phi^{\varepsilon}\|\right\}\leq C_{T}\qquad\text{for each }T>0.
\end{align}
We can easily see that the claim that $\int_{B_{R_0}(y)}\Phi\cdot\frac{\varepsilon|\nabla\phi^{\varepsilon}|^2}{2}dx\to0$ as $\varepsilon\to0$ is obtained by combining \eqref{eq:rewriting}, \eqref{eq:fast-convergence}, \eqref{eq:slow-divergence}. Therefore, that $\mathrm{supp}(\mu_t)\cap O=\emptyset$ for all $t\geq0$ is proved and we have verified (B.iv).

\medskip

\textbf{Part (A.iv).} We first of all note that the map
\begin{align*}
t\mapsto E^{\varepsilon}(t)-\int_{\{x:s(x)>\frac{1}{2}\sqrt{\varepsilon}\}}g^{\varepsilon}(x,t) k(\phi^{\varepsilon}(x,t))dx,
\end{align*}
where $g^{\varepsilon}$ is independent of $t$ on the integration region, is nonincreasing in $t$ due to \eqref{eq:energy-dissipation-equality}. By Helley's selection theorem, there exist a subsequence of $\varepsilon\to0$, still denoted by $\varepsilon\to0$, and a nonincreasing function $E(t)$ such that $E^{\varepsilon}(t)\to E(t)$ as $\varepsilon\to0$. Moreover, by Proposition \ref{prop:L2-estimate} for $T>0$, we have
\begin{align*}
\int_0^T\frac{1}{2\varepsilon^{\alpha}}\left(\int_{\left\{x\in\Omega:s(x)\leq\frac{1}{2}\sqrt{\varepsilon}\right\}}\eta_{\varepsilon}(s(x))\left(k(\phi_0^{\varepsilon})-k(\phi^{\varepsilon})\right)dx\right)^2\,dt\leq\int_0^T\frac{\varepsilon^{\alpha}}{2}|\lambda^{\varepsilon}(t)|^2\,dt\to0\qquad\text{as }\varepsilon
\to0,
\end{align*}
and therefore,
\begin{align*}
\liminf_{\varepsilon\to0}\frac{1}{2\varepsilon^{\alpha}}\left(\int_{\left\{x\in\Omega:s(x)\leq\frac{1}{2}\sqrt{\varepsilon}\right\}}\eta_{\varepsilon}(s(x))\left(k(\phi_0^{\varepsilon})-k(\phi^{\varepsilon})\right)dx\right)^2=0\qquad\text{for a.e. }t\geq0
\end{align*}
by Fatou's lemma. Note that Part (B.iv) implies that
\begin{align*}
\int_{\{x:s(x)>\frac{1}{2}\sqrt{\varepsilon}\}}g^{\varepsilon}\cdot\left(k(\phi^{\varepsilon}(t))-k(\phi^{\varepsilon}(0)\right)dx\to0\qquad\text{as }\varepsilon\to0.
\end{align*}
Combining this with the convergences of $E^{\varepsilon}(t)$ and of $\mu^{\varepsilon}_t$, we have
\begin{align*}
\lim_{\varepsilon\to0}\frac{1}{2\varepsilon^{\alpha}}\left(\int_{\left\{x\in\Omega:s(x)\leq\frac{1}{2}\sqrt{\varepsilon}\right\}}\eta_{\varepsilon}(s(x))\left(k(\phi_0^{\varepsilon})-k(\phi^{\varepsilon})\right)dx\right)^2=0\qquad\text{and}\qquad E(t)=\sigma\mu_t(\Omega)
\end{align*}
for a.e. $t\geq0$. The continuity of $t\mapsto\mu_t(\Omega)$ on $[0,\infty)\setminus B$ finishes proof of Part (A.iv).

\medskip

\textbf{Part (D).} Set $d \overline{\mu}^{\varepsilon} := \frac{1}{\sigma} \sqrt{2 W(\phi^{\varepsilon})} |\nabla \phi^{\varepsilon}| \, dx \, dt$. Then, for $\varphi\in C_c\left(\Omega\times[0,\infty)\right)$, we have, by Theorem \ref{thm:vanishing-xi},
\begin{align*}
\left|\int_{\Omega\times(0,\infty)}\varphi d \overline{\mu}^{\varepsilon}-\int_{\Omega\times(0,\infty)}\varphi d \mu^{\varepsilon}\right|&\leq\frac{\|\varphi\|_{L^{\infty}}}{\sigma}\int_{\mathrm{supp}(\varphi)}\left|\frac{\varepsilon |\nabla \phi^\varepsilon|^2}{2} + \frac{W(\phi^\varepsilon)}{\varepsilon} - \sqrt{2W(\phi^\varepsilon)} |\nabla \phi^\varepsilon|\right|dxdt\\
&\leq\frac{\|\varphi\|_{L^{\infty}}}{\sigma}\int_{\mathrm{supp}(\varphi)}\left( \frac{\sqrt{\varepsilon} |\nabla \phi^\varepsilon|}{\sqrt{2}} - \frac{\sqrt{W(\phi^\varepsilon)}}{\sqrt{\varepsilon}} \right)^2dt\\
&\leq\frac{\|\varphi\|_{L^{\infty}}}{\sigma}|\xi^{\varepsilon}|(\mathrm{supp}(\varphi))\to0\qquad\text{as }\varepsilon\to0.
\end{align*}
Therefore, by Proposition \ref{prop:convergence-measure}, we have $\overline{\mu}^{\varepsilon}\to \mu$ as Radon measures as $\varepsilon\to0$. We note that, for $T>0$, by Proposition \ref{prop:L2-estimate}, there exists $C_T>0$ such that
\begin{align*}
\sup_{\varepsilon\in(0,\varepsilon_1)}\int_{\Omega\times(0,T)}|g^{\varepsilon}|^2d\overline{\mu}^{\varepsilon}\leq C\sup_{\varepsilon\in(0,\varepsilon_1)}\int_0^T\max\{|\lambda^{\varepsilon}(t)|^2,1\}dt\leq C_T.
\end{align*}
for $\varepsilon_1\in(0,1)$ as in Proposition \ref{prop:L2-estimate}. Here, we used the fact from Proposition \ref{prop:energy-dissipation} and Young's inequality that there exists $C>0$ such that for all $t\geq0$,
\begin{align*}
\overline{\mu}^{\varepsilon}(\Omega)\leq\frac{1}{\sigma}\int_{\Omega}\left(\frac{\varepsilon|\nabla\phi^{\varepsilon}|^2}{2}+\frac{W(\phi^{\varepsilon})}{\varepsilon}\right)dx\leq C.
\end{align*}
Consequently, by \cite[Theorem 4.4.2]{H86}, there exists $g\in L^2_{loc}\left(0,\infty;L^2(\mu_t)^d\right)$ such that
\begin{align}\label{eq:Hutchinson}
\int_{\Omega\times[0,\infty)}g\cdot\Phi d\mu=\lim_{\varepsilon\to0}\frac{1}{\sigma}\int_{\Omega\times[0,\infty)}-g^{\varepsilon}\sqrt{2W(\phi^{\varepsilon})}\nabla\phi^{\varepsilon}\cdot\Phi dxdt.
\end{align}
for any $\Phi\in C_c\left(\Omega\times[0,\infty);\mathbb{R}^d\right)$. As $g^{\varepsilon}=\lambda^{\varepsilon}$ on $\Omega\setminus \overline{O}$, we have that for $\Phi\in C_c\left(\left(\Omega\setminus\overline{O}\right)\times[0,\infty);\mathbb{R}^d\right)$,
\begin{align*}
\int_{\Omega\times[0,\infty)}g\cdot\Phi d\mu&=\lim_{\varepsilon\to0}\frac{1}{\sigma}\int_{\Omega\times[0,\infty)}-\lambda^{\varepsilon}\sqrt{2W(\phi^{\varepsilon})}\nabla\phi^{\varepsilon}\cdot\Phi dxdt\\
&=\lim_{\varepsilon\to0}\frac{1}{\sigma}\int_{\Omega\times[0,\infty)}-\lambda^{\varepsilon}\nabla(k(\phi^{\varepsilon}))\cdot\Phi dxdt\\
&=\lim_{\varepsilon\to0}\frac{1}{\sigma}\int_{\Omega\times[0,\infty)}\lambda^{\varepsilon}k(\phi^{\varepsilon})\mathrm{div}(\Phi)dxdt.
\end{align*}
As $K(s)=\sigma^{-1}\int_{-1}^s\sqrt{2W(u)}du=\frac{1}{2}+\sigma^{-1}k(s)$, we have $\psi^{\varepsilon}=K\circ\phi^{\varepsilon}=\frac{1}{2}+\sigma^{-1}k(\phi^{\varepsilon})$, which implies that $k(\phi^{\varepsilon})$ converges to $\sigma\left(\psi-\frac{1}{2}\right)$ as $\varepsilon\to0$. Therefore, for $\Phi\in C_c\left(\left(\Omega\setminus\overline{O}\right)\times[0,\infty);\mathbb{R}^d\right)$,
\begin{align}
\int_{\Omega\times[0,\infty)}g\cdot\Phi d\mu&=\int_0^{\infty}\lambda\int_{\Omega}\left(\psi-\frac{1}{2}\right)\mathrm{div}(\Phi)dxdt\notag\\
&=\int_0^{\infty}\lambda\int_{\Omega}\psi\mathrm{div}(\Phi)dxdt\notag\\
&=-\int_0^{\infty}\lambda\int_{\Omega}\nu\cdot\Phi d\|\nabla\psi(\cdot,t)\|dt\notag\\
&=-\int_{\Omega\times[0,\infty)}\lambda\frac{d\|\nabla\psi(\cdot,t)\|}{d\mu_t}\nu\cdot\Phi d\mu.\label{eq:forcing-term}
\end{align}
Here, $\nu(\cdot,t)$ is the inner unit normal vector of $E(t)$ on $\partial^{\ast}E(t)$. If we set $\theta:=\left(\frac{d\|\nabla\psi(\cdot,t)\|}{d\mu_t}\right)^{-1}$ on $\partial^{\ast}E(t)$, then $\theta$ is positive integer-valued $\mathcal{H}^{d-1}$-a.e. on $\partial^{\ast}E(t)$ by the integrality of $\mu_t$.

Now we obtain the asymptotic limit for the velocity vector. Let
\begin{align*}
v^{\varepsilon} = 
\begin{cases}
-\frac{\partial_t \phi^{\varepsilon}}{|\nabla \phi^{\varepsilon}|} \frac{\nabla \phi^{\varepsilon}}{|\nabla \phi^{\varepsilon}|}, & \text{if } |\nabla \phi^{\varepsilon}| \neq 0, \\
0, & \text{otherwise.}
\end{cases}
\end{align*}
With similar arguments to above, we see that $d\tilde{\mu}^{\varepsilon}:=\frac{\varepsilon}{\sigma}|\nabla\phi^{\varepsilon}|^2dxdt$ converges $d\mu$ as Radon measures as $\varepsilon\to0$ and that there exists $v\in L^2_{loc}\left(0,\infty;L^2(\mu_t)^d\right)$ such that
\begin{align*}
\int_{\Omega\times[0,\infty)}v\cdot\Phi d\mu=\lim_{\varepsilon\to0}\int_{\Omega\times[0,\infty)}v^{\varepsilon}\cdot\Phi d\widetilde{\mu}_t^{\varepsilon}\quad\text{for any }\Phi\in C_c\left(\Omega\times[0,\infty);\mathbb{R}^d\right).
\end{align*}
We moreover have, by \eqref{eq:estimate-from-energy-dissipation}, \eqref{eq:L2-approximate-mean-curvature}, Theorem \ref{thm:rectifiability}, and \cite[Lemma 7.1]{MR08} (whose proof works for all dimensions), that
\begin{align*}
\lim_{\varepsilon\to0} \frac{1}{\sigma}\int_{\Omega\times(0,\infty)} -\varepsilon \left(\Delta \phi^{\varepsilon} - \frac{W'(\phi^{\varepsilon})}{\varepsilon^2}  \right) \nabla \phi^{\varepsilon} \cdot \Phi \, dx dt =\int_{\Omega\times(0,\infty)} h \cdot \Phi \, d\mu
\end{align*}
for $\Phi\in C_c\left(\Omega\times[0,\infty);\mathbb{R}^d\right)$. Here, $h$ is the generalized mean curvature vector as in Theorem \ref{thm:rectifiability}.
By combining this with \eqref{eq:Hutchinson}, we obtain
\begin{align*}
\int_{\Omega\times(0,\infty)} v \cdot \Phi \, d\mu & = \lim_{\varepsilon\to0} \int_{\Omega\times(0,\infty)} v^{\varepsilon} \cdot \Phi \, d\tilde{\mu}^{\varepsilon} \\
&= \lim_{\varepsilon\to0} \text{$\frac{1}{\sigma}$}\int_{\Omega\times(0,\infty)} -\varepsilon \left(\Delta \phi^{\varepsilon} - \frac{W'(\phi^{\varepsilon})}{\varepsilon^2} + g^{\varepsilon} \frac{\sqrt{2W(\phi^{\varepsilon})}}{\varepsilon} \right) \nabla \phi^{\varepsilon} \cdot \Phi \, dx dt \\
&=\int_{\Omega\times(0,\infty)} (h+g) \cdot \Phi \, d\mu
\end{align*}
for $\Phi\in C_c\left(\Omega\times[0,\infty);\mathbb{R}^d\right)$. Therefore, $(\mu_t)_{t>0}$ is an $L^2$-flow with the velocity vector $v=h+g$ (see \cite[Proposition 4.3]{T17} and \cite[Lemma 6.3]{MR08}). In particular, by \eqref{eq:forcing-term}, we obtain
\[
v=h-\frac{\lambda}{\theta}\nu(\cdot,t)\quad\text{$\mathcal{H}^{d-1}$-a.e. on }\partial^{\ast}E(t)\setminus\overline{O}.
\]
We finish the proof.
\end{proof}

\appendix
\section*{Appendix}\label{sec:appendix-A}
We leave several statements as preparatory steps for the proof of Theorem \ref{thm:integrality}. We refer to \cite{HT00,T03,TT16} for references and \cite[Subsection 4.4]{T23} in particular. In this appendix, we adopt the settings and the facts that are developed in this paper.



\begin{proposition}\label{prop:appendix-3}(See \cite[Proposition 4.1]{T03} and \cite[Proposition 15]{T23})
For any $s,\,T>0$ and $a\in(0,T)$, there exist $b=b(a,s,T),\,\varepsilon_5=\varepsilon_5(a,s,T)>0$ such that
\begin{align*}
\int_{\{x \in B_1(0) | |\phi^{\varepsilon}(x, t)| \ge 1 - b\}} \frac{W(\phi^{\varepsilon}(x, t))}{\varepsilon} dx \le s
\end{align*}
for all $t\in(a,T)$ and $\varepsilon\in(0,\varepsilon_5)$.
\end{proposition}

The validity of Proposition \ref{prop:appendix-3} under the setting of this paper follows from the proof of \cite[Proposition 15]{T23} in the exactly same way based on \cite[Lemmas 9, 10]{T23}. The lemmas also follow from the identical proofs, with the only one adaptation made to \cite[(96)]{T23}. As the adaptation is identical to the one shown in the proof of Lemma \ref{lem:auxiliary-lemma2} of this paper, we skip the proof of Proposition \ref{prop:appendix-3}.

\begin{lemma}\label{lem:appendix-1}
There exists a constant $C>0$ (depending only on the problem) such that for each $\varepsilon\in(0,1)$, it holds
\begin{align*}
\sup_{\Omega \times [0, T)} \varepsilon |\nabla \phi^{\varepsilon}| + \sup_{x, y \in \Omega,\, t \in [0, T)} \frac{\varepsilon^{\frac{3}{2}} |\nabla \phi^{\varepsilon}(x, t) - \nabla \phi^{\varepsilon}(y, t)|}{|x - y|^{\frac{1}{2}}} \le C.
\end{align*}
\end{lemma}

In the following statements, the notation $\nu_d$ refers to the $x_d$-component of the vector $\frac{\nabla\phi^{\varepsilon}}{|\nabla\phi^{\varepsilon}|}$ when $|\nabla\phi^{\varepsilon}|\neq0$ and $0$ otherwise.

\begin{proposition}\label{prop:appendix-1}
For any $s,b,\lambda\in(0,1)$, and $C\in(1,\infty)$, there exist $\rho,\varepsilon_1\in(0,1)$, and $L\in(1,\infty)$ satisfying the following: Suppose that $\varepsilon\in(0,\varepsilon_1)$,
\begin{align*}
&\sup_{\Omega \times [0, T)} \varepsilon |\nabla \phi^{\varepsilon}| + \sup_{x, y \in \Omega,\, t \in [0, T)} \frac{\varepsilon^{\frac{3}{2}} |\nabla \phi^{\varepsilon}(x, t) - \nabla \phi^{\varepsilon}(y, t)|}{|x - y|^{\frac{1}{2}}} \le C,\\
&\int_{B_{4\varepsilon L}(0)} (|\xi_{\varepsilon}| + (1 - (\nu_d)^2)\varepsilon|\nabla \phi^{\varepsilon}|^2) dx \le \rho(4\varepsilon L)^{d-1},\qquad\text{and}\\
&\sup_{B_{4\varepsilon L}(0)} (\xi_{\varepsilon})^+ \le \varepsilon^{-\lambda}.
\end{align*}
Then,
\begin{align}
[-1 + b, 1 - b] \subset \phi^{\varepsilon}(J) \qquad \text{and} \qquad \inf_{x \in J} \partial_{x_d} \phi^{\varepsilon}(x) > 0 \quad \text{or} \quad \sup_{x \in J} \partial_{x_d} \phi^{\varepsilon}(x) < 0,\label{eq:appendix-1}\tag{A.1}
\end{align}
where $J := B_{3\varepsilon L}(0) \cap \{(0, \dots, 0, x_d) \in \mathbb{R}^d\, :\, x_d \in \mathbb{R}\}$ and
\begin{align*}
\left| \sigma - \frac{1}{\omega_{d-1}(L\varepsilon)^{d-1}} \int_{B_{\varepsilon L}(0)} \frac{\varepsilon |\nabla \phi^{\varepsilon}(x)|^2}{2} + \frac{W(\phi^{\varepsilon}(x))}{\varepsilon}dx \right| \le s.
\end{align*}
\end{proposition}

\begin{proposition}\label{prop:appendix-2}
For any $R,E_0\in(0,\infty),\,s\in(0,1)$, and $N\in\mathbb{N}$, there exists $\rho\in(0,1)$ satisfying the following: Suppose that a subset $Y\subset\mathbb{R}^d$ satisfies that $|Y|\leq N$, $Y\subset\{(0, \dots, 0, x_d) \in \mathbb{R}^d | x_d \in \mathbb{R}\}$, $\mathrm{diam}(Y)\leq\rho R$, and there exists $a\in(0,R)$ such that $|y-z|>3a$ for any $y,z\in Y$ with $y\neq z$. Moreover, we assume that
\begin{itemize}
    \item[(i)] For any $x\in Y$ and $r\in[a,R]$,
    \begin{align*}
    &\int_{B_{r}(x)} |\xi_{\varepsilon}| + (1 - (\nu_d)^2)\varepsilon|\nabla \phi^{\varepsilon}|^2 + \varepsilon|\nabla \phi^{\varepsilon}| \left|\Delta \phi^{\varepsilon} - \frac{W'(\phi^{\varepsilon})}{\varepsilon^2}\right| dy \le \rho r^{d-1}\\
    &\qquad\qquad\text{and}\qquad\int_{B_r(x)} \varepsilon |\nabla \phi^{\varepsilon}|^2 dy \le E_0 r^{d-1}.
    \end{align*}
    \item[(ii)] For any $x\in Y$,
    \begin{align*}
    \int_a^R \frac{1}{\tau^d} \int_{B_{\tau}(x)} (\xi_{\varepsilon})^+ dyd\tau \le \rho.
    \end{align*}
\end{itemize}
Then, it holds that
\begin{align*}
\sum_{x \in Y} \frac{1}{a^{d-1}} \int_{B_a(x)} \frac{\varepsilon |\nabla \phi^{\varepsilon}(y)|^2}{2} + \frac{W(\phi^{\varepsilon}(y))}{\varepsilon}dy \le s + \frac{1+s}{R^{d-1}} \int_{\{y | \mathrm{dist}(y, Y) < R\}} \frac{\varepsilon |\nabla \phi^{\varepsilon}(y)|^2}{2} + \frac{W(\phi^{\varepsilon}(y))}{\varepsilon}dy.
\end{align*}
\end{proposition}

\section*{Acknowledgments} The author expresses his gratitude to the referee for the comments on improving the presentation and the contents of the paper.



\bibliographystyle{plain}
\bibliography{Preprint}

\end{document}